\title{The K-theory cochains of H-spaces and height 1 chromatic homotopy theory}
\author{Sven van Nigtevecht\footnote{\href{mailto:s.vannigtevecht@uu.nl}{\texttt{s.vannigtevecht@uu.nl}}}, with a joint appendix with Max Blans\footnote{\href{mailto:m.a.blans@uu.nl}{\texttt{m.a.blans@uu.nl}}}}
\date{4th March 2025}
\newcommand*{\K}{\mathrm{K}}
\newcommand*{\E}{\mathbf{E}}
\newcommand*{\Q}{\mathbf{Q}}
\newcommand*{\Z}{\mathbf{Z}}
\newcommand*{\N}{\mathbf{N}}
\newcommand*{\nerve}{\mathrm{N}}
\newlist{condenum}{enumerate}{1}
\setlist[condenum]{label={\upshape(}\roman*{\upshape)}}
\newlist{thmenum}{enumerate}{1}
\setlist[thmenum]{label={{\upshape(}\alph*{\upshape)}}}
\newlist{defenum}{enumerate}{1}
\setlist[defenum]{label={{\upshape(}\alph*{\upshape)}}}
\newcommand*{\Modh}{\smash{\widehat{\mathrm{Mod}}}\vphantom{\Mod}}
\newcommand*{\LModh}{\smash{\widehat{\mathrm{LMod}}}\vphantom{\LMod}}
\newcommand*{\RModh}{\smash{\widehat{\mathrm{RMod}}}\vphantom{\RMod}}
\newcommand*{\CAlgh}{\smash{\widehat{\mathrm{CAlg}}}\vphantom{{\mathbf{CAlg}}}}
\newcommand*{\Torh}{\smash{\widehat{\Tor}}\vphantom{\Tor}}
\DeclareMathOperator{\LTor}{LTor}
\DeclareMathOperator{\RTor}{RTor}
\newcommand*{\LTorh}{\smash{\widehat{\LTor}}\vphantom{\LTor}}
\newcommand*{\RTorh}{\smash{\widehat{\RTor}}\vphantom{\RTor}}
\DeclareMathOperator{\Free}{Free}
\newcommand*{\compKU}{\mathop{(\KU_p)^{\wedge}_*}}
\newcommand*{\MorMod}{\mathrm{MorMod}}
\newcommand*{\hLambda}{\hat{\Lambda}}
\newcommand*{\tM}{\widetilde{M}}
\begin{document}
\maketitle

\begin{abstract}
\noindent
Fix an odd prime $p$.
Let $X$ be a pointed space whose $p$-completed K-theory $\mathrm{KU}_p^*(X)$ is an exterior algebra on a finite number of odd generators; examples include odd spheres and many H-spaces.
We give a generators-and-relations description of the $\mathbf{E}_\infty$-$\mathrm{KU}_p$-algebra spectrum $\mathrm{KU}_p^{X_+}$ of $\mathrm{KU}_p$-cochains of $X$.
To facilitate this construction, we describe a $\K(1)$-local analogue of the Tor spectral sequence for $\mathbf{E}_1$-ring spectra.
Combined with previous work of Bousfield, this description of the cochains of $X$ recovers a result of Kjaer that the $v_1$-periodic homotopy type of $X$ can be modelled by these cochains.
This then implies that the Goodwillie tower of the height~$1$ Bousfield--Kuhn functor converges for such $X$.
\end{abstract}

\setcounter{tocdepth}{1}
\tableofcontents

\section{Introduction}
If $A$ is an $\E_\infty$-ring spectrum and $X$ a space, then there is an $\E_\infty$-ring spectrum $A^{X_+}$ (called the \emph{$A$-cochains} of $X$) refining the $A$-cohomology of $X$:
\[
    \mathop{\pi_{-*}} A^{X_+} \cong A^*(X).
\]
If $A = \uH R$ is the Eilenberg–MacLane spectrum of a commutative ring $R$, then this recovers the $R$-valued singular cochains $C^\bullet(X; R)$ if we use the equivalence between the $\infty$-categories of $\uH R$-modules and of chain complexes over $R$.

The ring spectrum $A^{X_+}$ captures more information about $X$ than does the graded ring $A^*(X)$.
This can already be clearly seen in the case of rational cohomology: from the works of Quillen \cite{quillenRationalHomotopyTheory1969} and Sullivan \cite{sullivanInfinitesimalComputationsTopology1977} we can deduce that the rational cochains give rise to an equivalence of $\infty$-categories
\[
    \Spaces_\Q^{\geq 2,\ \fin} \simeqto \CAlg^\aug(\Ch_\Q)^{\geq 2,\ \fin}
\]
between simply connected rational pointed spaces of finite type, and augmented simply connected rational cdga's of finite type.

In the present article, we consider the case where $A$ is $p$-completed complex K-theory $\KU_p$, where $p$ is an odd prime.
Our first result (Theorem~\ref{thm:cochain_presentation}) is a computation of the $\KU_p$-cochains of a class of H-spaces, by giving a presentation in terms of `generators-and-relations'.
This relies on the fact that the $p$-adic K-theory of many H-spaces is an exterior algebra on a finite number of odd generators.
In fact, this is all we use of such H-spaces, and so our computation works more generally for any space with such $p$-adic K-theory.

Similar to the case of the rational cochains, the K-theory cochains of a (finite) space $X$ also capture a part of the homotopy type of $X$: the \emph{$v_1$-periodic} part.
This is a special case of work by Behrens and Rezk \cite{Behrens_Rezk_Kn} and Heuts \cite{Heuts_Lie} describing higher algebraic models for unstable homotopy theory.
Briefly, in unstable chromatic homotopy theory, the $\infty$-category of $p$-local pointed spaces $\Spaces_p$ is studied through a sequence of localisations $\Spaces_{v_n}$, one for each $n\geq 0$.
Here $n$ is referred to as the \emph{height}.
Height~0 corresponds to rationalisation.
Behrens and Rezk describe a cochain-model for $v_n$-periodic spaces, whereas Heuts works with coalgebras and spectral Lie algebras.
For an introduction to the subject, we refer to \cite{Behrens_Rezk_survey_Kn} and \cite{Heuts_handbook_Lie}.

The main difference compared to rational homotopy theory is that the cochain model does not yield an equivalence of $\infty$-categories if $n>0$: appropriate cochains (e.g., for $n=1$, the $\KU_p$-cochains) only provide an approximation to $v_n$-periodic homotopy theory.
The failure of this approximation can be measured: roughly speaking, the cochains work well if the value of the (height~$n$) \emph{Bousfield--Kuhn functor} $\Phi_n$ at $X$ can be recovered from the cochains.
More precisely, the topological Andr\'e--Quillen cohomology (henceforth \emph{TAQ-cohomology}) of the cochains on $X$ should agree with $L_{\K(n)}\Phi_nX$ via a specific \emph{comparison map}.
For finite $X$ this turns out to be equivalent to asking the Goodwillie tower of $\Phi_n$ to converge at $X$.
Spaces for which this tower converges are called \emph{$\Phi_n$-good} in this context; see \cite[§8]{Behrens_Rezk_survey_Kn} for a further discussion of $\Phi_n$-good spaces.
See \cite{kuhn_guide_telescopic} for background on the functors $\Phi_n$.
 
It was previously shown by Kjaer \cite{kjaer_v1} that the H-spaces under consideration in this article are $\Phi_1$-good.
This paper's second result (Theorem~\ref{thm:diagram_of_fibre_seqs}) is an alternative proof of this fact using our presentation of the K-theory cochains.
Kjaer's original proof makes use of a computation of the homotopy groups of $\Phi_1 X$, and of the homotopy groups of the TAQ-cohomology of $\KU_p^{X_+}$ by means of a spectral sequence.
Our proof on the other hand avoids a computation of these homotopy groups and stays in the realm of higher algebra throughout.
This relies on the fact that TAQ-cohomology sends a presentation of an augmented algebra to a fibre sequence of spectra.

\subsection{A presentation of K-theory cochains}
One way to understand an algebra $R$ over a field $k$ is to give a description in terms of generators and relations, i.e., an isomorphism of the form
\[
    R \cong k[x_1,\dotsc,x_n]/(f_1,\dotsc,f_m),
\]
where $f_i \in k[x_1,\dotsc,x_n]$.
When working with ring spectra, there is no longer a good notion of elements of an algebra.
One instead has to rewrite everything more categorically in terms of morphisms alone.
In the algebraic case, we can reformulate the above isomorphism by saying we have a pushout square of $k$-algebras
\[
    \begin{tikzcd}
        k[y_1,\dotsc,y_m] \arrow[r,"y_i\shortmapsto f_i"] \arrow[d] &  k[x_1,\dotsc,x_n] \arrow[d] \\
        k \arrow[r] & k[x_1,\dotsc,x_n]/(f_1,\dotsc,f_m). \arrow[ul, phantom, "\ulcorner", very near start]
    \end{tikzcd}
\]

Taking this concept to higher algebra, let $A$ be an $\E_\infty$-ring spectrum.
The analogue of a polynomial algebra is the \emph{symmetric $A$-algebra} $\Sym_A(M)$ on an $A$-module spectrum $M$.
This is the $\E_\infty$-$A$-algebra with the universal property that
\[
    \Map_{\CAlg_A}(\Sym_A(M),\ B) \simeq \Map_{\Mod_A}(M,\ B)
\]
for any $\E_\infty$-$A$-algebra $B$.

Let $B$ be an $\E_\infty$-$A$-algebra.
A \emph{presentation} of $B$ is a (homotopy) pushout square of $\E_\infty$-$A$-algebras
\[
    \begin{tikzcd}
        \Sym_A(M) \arrow[r] \arrow[d] & \Sym_A(N) \arrow[d] \\
        A \arrow[r] & B. \arrow[ul, phantom, "\ulcorner", very near start]
    \end{tikzcd}
\]
The module $N$ plays the role of the `generators' of $B$, and the map $\Sym_B(M) \to \Sym_B(N)$ sends $M$ to the `relations' between these generators.
Informally, the cofibre sequence is a witness to $B$ being obtained by imposing additional relations in the symmetric algebra $\Sym_A(N)$ by gluing in a cell for each relation.
If we work with augmented $A$-algebras, which is the case in this paper, then we can equivalently formulate a presentation as a cofibre sequence of augmented $\E_\infty$-$A$-algebras
\[
    \Sym_A(M) \to \Sym_A(N) \to B,
\]
since $A$ is a zero object in this $\infty$-category.

One can also work in an $E$-local setting, where $E$ is a spectrum.
In that case, we instead look for a cofibre sequence
\[
    \mathop{L_E} \Sym_A(M) \to  \mathop{L_E} \Sym_A(N) \to B
\]
of \emph{$E$-local} augmented $\E_\infty$-$A$-algebras, meaning that the $E$-localisation of the ordinary cofibre of $\mathop{L_E} \Sym_A(M) \to \mathop{L_E} \Sym_A(N)$ is identified with $B$.
In this paper, we shall work in a $\K(1)$-local setting throughout.
Because of this, it will be convenient to abbreviate
\[
    Y \hotimes Z := \mathop{L_{\K(1)}}\br*{Y\otimes Z}
\]
for spectra $Y,Z$.

In §\ref{sec:construction_presentation}, we construct a presentation of the following form; see \cref{constr:R_and_T} for the definition of $R$ and $T$.
\begin{theoremLetter}
\label{thm:cochain_presentation}
Let $p$ be an odd prime.
Let $X$ be a pointed space such that we have an isomorphism $\KU_p^*(X) \cong \Lambda_{(\KU_p)_*}[G]$ of $\theta$-algebras with Adams operations, where $G$ is a Morava module concentrated in odd degree which is finitely generated and free as a $(\KU_p)_*$-module; see Notation~{\upshape\ref{not:ext_algebra}}.
Then there exists a cofibre sequence in the $\infty$-category $\mathop{L_{\K(1)}} \CAlg^\aug_{\KU_p}$ of the form
\[
    \begin{tikzcd}
        \KU_p \hotimes \Sym(M(G)) \arrow[r, "R"] & \KU_p \hotimes \Sym(M(G)) \arrow[r, "T"] & \KU_p^{X_+}.
    \end{tikzcd}
\]
\end{theoremLetter}

The proof requires a calculation of the homotopy groups of a relative $\K(1)$-local tensor product involving free algebras of the above form.
For this, we use a $\K(1)$-local version of the Tor spectral sequence that we develop in Appendix~\ref{app:K1_Tor_SS}, which is joint work with Max Blans.
Its $E^2$-page consists of certain \emph{completed} Tor groups, which are more computable in this case; see Theorem~\ref{prop:K1_local_Tor_SS} in Appendix~\ref{app:K1_Tor_SS} for the precise statement.
We do not know how to compute the $E^2$-page of the ordinary Tor spectral sequence in this setting; see \cref{rmk:why_use_decompletions} for a further discussion.

\begin{remark}
Roughly speaking, the isomorphism $\KU_p^*(X) \cong \Lambda[G]$ is saying that $\KU_p^*(X)$ is an exterior algebra on a finite number of odd generators, where these generators have a specified action by the $p$-adic Adams operations.
The spectrum $M(G)$ is constructed in §\ref{ssec:Ktheory_Moore_spectra} and is determined by being the $\K(1)$-local spectrum with the property that
\[
    \compKU(M(G)) := \pi_*( \KU_p \hotimes M(G) ) \cong G.
\]
\end{remark}

\begin{remark}
The maps $R$ and $T$ admit the following descriptions.
The ring $\KU_p^*(X)$ is a \emph{$\theta$-algebra}, meaning it carries a non-linear operation $\theta^p$.
The isomorphism of $\theta$-algebras $\KU_p^*(X) \cong \Lambda[G]$ in particular means (see Notation~\ref{not:ext_algebra}) that the $\theta^p$-operation on $\KU_p^*(X)$ comes from a $\Z_p$-linear map $\theta^p_G \colon G \to G$.
By a result of McClure, we have an isomorphism
\[
    \compKU(\Sym(M(G))) \cong \Free_\theta[G],
\]
where $\Free_\theta[G]$ is the \emph{free $\theta$-algebra} on $G$, which is the ring obtained by adding formal iterates of a formal operation $\theta^p$ to $G$.
The map $R$ corresponds to a map
\[
    \KU_p \hotimes M(G) \to \KU_p \hotimes \Sym(M(G))
\]
which on homotopy groups is the map $G \to \Free_\theta[G]$ sending $y$ to $\theta^p (y) - \theta^p_G(y)$.
Informally, $R$ encodes the relation that the formal operation $\theta^p$ must coincide with the concrete operation $\theta^p_G$ on $G$.
The map $T$ corresponds to a map
\[
    \KU_p \hotimes M(G) \to \KU_p^{X_+}
\]
which on homotopy groups is the inclusion $G \to \Lambda[G]$.
Informally, $T$ sends the generators of the symmetric algebra to the concrete generators of the cochains.
\end{remark}

For some $X$, we can find explicit expressions for $M(G)$, $R$, and $T$.

\begin{example}
Consider $X = S^{2n+1}$ an odd sphere, where $n\geq 0$.
In this case, writing $G$ for the Morava module
\[
    G := \redKU_p^*(S^{2n+1}) \cong \compKU(\S^{-2n-1}),
\]
we have an isomorphism of graded rings
\[
    \KU_p^*(X) \cong \Lambda[G].
\]
The operation $\theta^p$ on $\KU_p^*(X)$ is given by multiplication by $p^n$.
Writing $\theta^p_G$ for the map $G \to G$ given by multiplication by $p^n$, this induces a $\theta$-algebra structure on $\Lambda[G]$, making the above ring isomorphism even one of $\theta$-algebras.
The spectrum $M(G)$ is $\S^{-2n-1}_{\K(1)}$.
As such, the resulting presentation for the cochains on $S^{2n+1}$ is of the form
\[
    \begin{tikzcd}
        \mathop{L_{\K(1)}}\KU_p\{y\} \arrow[r, "y \ \mapsto\ \theta^p (x) - p^n x"] &[5em] \mathop{L_{\K(1)}} \KU_p\{x\} \arrow[r] & \KU_p^{S^{2n+1}_+}
    \end{tikzcd}
\]
with both $x$ and $y$ in degree $-2n-1$.
Here $\KU_p\{x\}$ denotes the symmetric $\KU_p$-algebra on one generator, whose fundamental class is labelled $x$.
\end{example}

Bousfield \cite[Thm.~6.3]{Bous_v1_Hspaces} showed that many H-spaces have $p$-adic K-theory as described in Theorem~\ref{thm:cochain_presentation}: if $X$ is a simply connected H-space such that $\uH_*(X;\Q)$ is associative and $\uH_*(X;\Z_{(p)})$ is finitely generated over $\Z_{(p)}$ (in particular, it vanishes above some degree), then $\KU_p^*(X)$ is of the form described in Theorem~\ref{thm:cochain_presentation}, with an explicit $G$.
Note that this in particular applies to all simply connected compact Lie groups.

\begin{example}
Consider $X = \SU(n)$, where $n\geq 2$.
The relevant module $G$ in this case is isomorphic to $\redKU_p^*(\CP^{n-1})$; see \cite[Ex.~9.3]{Bous_v1_Hspaces}.
Recall that $\KU_p^*(\CP^{n-1})$ is a truncated polynomial ring $\Z_p[x]/(x^n)$, where $x+1$ is the class of the tautological line bundle on $\CP^{n-1}$.
From this, we see that the Adams operation $\psi^k$ (for $k \in \N$) and the operation $\theta^p_G$ are given by
\begin{align*}
    \SwapAboveDisplaySkip
    \psi^k(x) &= \sum_{j=1}^k \binom{k}{j}x^j;\\
    \theta^p_G(x) &= \frac{1}{p}\sum_{j=1}^{p-1} \binom{p}{j} x^j.
\end{align*}
The spectrum $M(G)$ is the $\K(1)$-local Spanier--Whitehead dual of $\Sigma^\infty \CP^{n-1}$.
\end{example}

\subsection{Comparing cochains with the Bousfield--Kuhn functor}
Let $n\geq 1$, and let $\Phi_n\colon \Spaces_* \to \Sp_{\mathrm{T}(n)}$ denote the height~$n$ telescopic Bousfield--Kuhn functor \cite[§5]{Behrens_Rezk_survey_Kn}.
Behrens and Rezk \cite[§6]{Behrens_Rezk_Kn} construct a \emph{comparison map}
\[
    c_X \colon \mathop{ L_{\K(n)}}\Phi_n X \to \TAQ_{\S_{\K(n)}}(\S_{\K(n)}^{X_+})^\vee
\]
where $\TAQ(-)^\vee$ denotes topological Andr\'e--Quillen cohomology.
If this comparison map is an equivalence, then we think of the cochains as providing a good model for the $v_n$-periodic homotopy type of $X$.
This heuristic is made precise in the work of Heuts \cite{Heuts_Lie}: the spectrum $\Phi_n X$, when endowed with the structure of a spectral Lie algebra, \emph{does} capture all $v_n$-periodic information about $X$.
If the spectrum $L_{\K(n)}\Sigma^\infty_+X$ is $\K(n)$-locally dualisable, then the map $c_X$ is an equivalence if and only if $X$ is $L_{\K(n)}\Phi_n$-good (i.e., the Goodwillie tower for $L_{\K(n)} \Phi_n$ converges at $X$); see \cite[Cor.~7.15]{Heuts_handbook_Lie}.

Let us now, and henceforth, specialise to the height $n=1$ case.
In this case, the telescope conjecture is a theorem, so that $\mathrm{T}(1)$-local spectra are the same as $\K(1)$-local spectra, and hence $L_{\K(1)}\Phi_1 \simeq \Phi_1$; see, e.g., \cite[§3]{barthelShortIntroductionTelescope2020} for an overview.
Even at height~$1$, the comparison map $c_X$ can be hard to work with, because it is defined over the $\K(1)$-local sphere.
There is a $\KU_p$-linear version of the comparison map
\[
    c_X^{\KU_p} \colon \KU_p \hotimes \Phi_1 X \to \TAQ_{\KU_p}(\KU_p^{X_+})^\vee,
\]
which lends itself more to computation.
Under suitable further finiteness assumptions (see, e.g., Lemma~\ref{lem:Phi_good_iff_K_theoretic_equiv}), the integral comparison map $c_X$ is an equivalence if and only if the $\KU_p$-linear one is.

The presentation from Theorem~\ref{thm:cochain_presentation}, combined with work by Bousfield, can be used to show that many of the aforementioned H-spaces are $\Phi_1$-good.
More specifically, for $X$ satisfying the conditions of Theorem~\ref{thm:cochain_presentation} plus additional conditions (see Notation~\ref{not:ext_algebra_plus_Bousfield_stuff}), Bousfield \cite[Thm.~8.1]{Bous_v1_Hspaces} computed $\Phi_1X$, showing that there is a fibre sequence
\[
    \Phi_1X \to M^\vee(G) \to M^\vee(G),
\]
where $M^\vee(G)$ is the $\K(1)$-local Spanier--Whitehead dual of $M(G)$.
Applying TAQ-cohomology (see §\ref{ssec:TAQ} for a brief review) to the cofibre sequence of Theorem~\ref{thm:cochain_presentation} yields a fibre sequence
\[
    \TAQ_{\KU_p}(\KU_p^{X_+})^\vee \to \map(M(G),\ \KU_p) \to \map(M(G),\ \KU_p).
\]
In §\ref{ssec:construction_of_diagram}, we compare these fibre sequences by constructing a diagram of the following form; see \cref{constr:big_diagram_Thm_B} for the construction of the diagram.

\begin{theoremLetter}
\label{thm:diagram_of_fibre_seqs}
Let $X$ be a pointed space satisfying the conditions of Notation~{\upshape\ref{not:ext_algebra_plus_Bousfield_stuff}}.
Then there exists a commutative diagram in the $\infty$-category $\mathop{L_{\K(1)}}\Mod_{\KU_p}$ of the form
\[
    \begin{tikzcd}
        \KU_p \hotimes \Phi_1 X \arrow[r] \arrow[d,"c_X^{\KU_p}"] &[1.5em] \KU_p \hotimes M^\vee(G) \arrow[r] \arrow[d,"\simeq"] &[1.5em] \KU_p \hotimes M^\vee(G) \arrow[d,"\simeq"]\\
        \TAQ_{\KU_p}(\KU_p^{X_+})^\vee \arrow[r,"\TAQ(T)^\vee"] & \map(M(G),\ \KU_p) \arrow[r,"\TAQ(R)^\vee"] & \map(M(G),\ \KU_p),
    \end{tikzcd}
\]
with the top sequence the {\upshape(}$\KU_p$-linearised{\upshape)} fibre sequence from Bousfield, the bottom sequence the fibre sequence coming from the presentation of Theorem~{\upshape\ref{thm:cochain_presentation}}, and with $c_X^{\KU_p}$ the $\KU_p$-linear Behrens--Rezk comparison map.
\end{theoremLetter}

Thus $c_X^{\KU_p} $ is an equivalence for such $X$.
As detailed in §\ref{ssec:construction_of_diagram}, this implies that $c_X$ is an equivalence, and thus recovers the following result of Kjaer.

\begin{corollaryLetter}[\cite{kjaer_v1}]
Let $p$ be an odd prime.
If $X$ is a pointed space satisfying the conditions of Notation~{\upshape\ref{not:ext_algebra_plus_Bousfield_stuff}}, then $X$ is $\Phi_1$-good.
In particular, if $X$ is a simply connected H-space such that $\uH_*(X;\Q)$ is associative and $\uH_*(X;\Z_{(p)})$ is a finitely generated $\Z_{(p)}$-module, then $X$ is $\Phi_1$-good.
\end{corollaryLetter}

\subsection{Conventions and notation}
Throughout this paper, we fix an odd prime $p$.

We use the language of $\infty$-categories in the sense of Lurie \cite{HTT} as our formalism for homotopy theory.
This is merely preferential, and the proofs and constructions should carry over to a different system without much difficulty.
All limits and colimits should be understood as \emph{homotopy} limits and colimits.

We view K-theory homology and cohomology groups as $\Z/2$-graded, by recording only the K-homology groups in degree $0$ and $1$ (or $0$ and $-1$ for K-cohomology).
In particular, we make the identifications
\[
    \KU_* \cong \Z \qquad \text{and} \qquad (\KU_p)_* \cong \Z_p,
\]
both concentrated in degree 0.

The smash product of two spectra $Y$ and $Z$ is denoted by $Y \otimes Z$, and the mapping spectrum from $Y$ to $Z$ is denoted by $\map(Y,Z)$.
We write $\hotimes$ for the $\K(1)$-local smash product of $Y$ and $Z$, i.e.,
\[
    Y \hotimes Z := \mathop{L_{\K(1)}}\br*{Y\otimes Z}.
\]
We write $\compKU(Y) := \pi_*(\KU_p\hotimes Y)$ for the completed $\KU_p$-homology of $Y$.

We write $\Mod^*_{\Z_p}$ for the category of $\Z/2$-graded $\Z_p$-modules, and $\CAlg^*_{\Z_p}$ for the category of graded-commutative $\Z/2$-graded $\Z_p$-modules.
We write $\Modh^*_{\Z_p}$ for the full subcategory of $\Mod_{\Z_p}^*$ on the derived $p$-complete modules (a.k.a.\ $L$-complete modules), and $\CAlgh^*_{\Z_p}$ for the full subcategory of $\CAlg_{\Z_p}^*$ on the algebras whose underlying module is derived $p$-complete.
(See §\ref{ssec:morava_modules} for a brief reminder of derived $p$-completeness.)

In Appendix~\ref{app:K1_Tor_SS} we work with $\Z$-graded objects rather than $\Z/2$-graded objects.
As such, in the appendix (and only in the appendix) the notation $\Mod_{\Z_p}^*$ stands for $\Z$-graded $\Z_p$-modules, rather than $\Z/2$-graded $\Z_p$-modules.

\subsection*{Acknowledgements}
\addcontentsline{toc}{subsection}{\protect\numberline{}Acknowledgements}
The present work is a product of my Master's thesis.
First and foremost, I would like to thank Gijs Heuts for suggesting the topic, for his guidance, and for being ever ready to answer my many questions.
I have learned much from my conversations with Max Blans on the topic, and am particularly thankful for his suggestion of using a $\K(1)$-local Tor spectral sequence, which now appears in our joint appendix to this paper.
I thank Jack Davies, Gijs Heuts and Lennart Meier for their proofreading and suggestions, and Lennart in particular for catching a subtle mistake in an earlier version.
Finally, I would like to thank the anonymous referee for helpful comments and suggestions.

Many of the ideas presented here originate from the paper by Bousfield \cite{Bous_v1_Hspaces}.

\section{K-theoretic preliminaries}
\label{sec:Kthy_preliminaries}
In this section we review some standard results that we need in the remainder of this text.
In §\ref{ssec:higher_algebra} we review two basic constructions from higher algebra: symmetric $A$-algebras and $A$-cochains (where $A$ is an $\E_\infty$-ring spectrum).
The remainder of the section is devoted to $p$-completed K-theory.
In §\ref{ssec:morava_modules} we discuss the structure that the Adams operations give to the K-theory of a spectrum, turning it into a Morava module.
In §\ref{ssec:theta_algebras} we discuss $\theta$-algebras, which is the additional structure on the K-homology of an $\E_\infty$-ring spectrum and the K-cohomology of a space.
Lastly, in §\ref{ssec:Ktheory_Moore_spectra} we construct a K-theory Moore spectrum $M(G)$ on a Morava module $G$ that is finitely generated and concentrated in either even or odd degree.

\subsection{Recollections from higher algebra}
\label{ssec:higher_algebra}

\begin{theorem}[\cite{HA}, Prop.~3.1.3.13]
\label{thm:free_alg_in_symm_mon}
Let $\calC$ be a symmetric monoidal $\infty$-category.
Suppose that $\calC$ has all countable colimits and that the tensor product functor preserves these in each variable separately.
Then the forgetful functor $\CAlg(\calC)\to \calC$ admits a left adjoint.
\end{theorem}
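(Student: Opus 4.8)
The plan is to exhibit the left adjoint explicitly as the symmetric-algebra functor and to verify its universal property; the two hypotheses are exactly what one needs to make the classical colimit formula available and to equip its value with an algebra structure. First I would define the candidate functor $\Sym\colon\calC\to\calC$ by
\[
    \Sym(M)\;:=\;\coprod_{n\geq 0}\,\Sym^n(M),\qquad \Sym^n(M):=(M^{\otimes n})_{h\Sigma_n},
\]
the coproduct over $n$ of the homotopy orbits of the canonical $\Sigma_n$-action (permuting factors) on the $n$-fold tensor power. One must first check that this colimit exists under the stated hypothesis: the homotopy orbits $X_{h\Sigma_n}$ are the geometric realisation of the two-sided bar construction $B_\bullet(\mathbf 1,\Sigma_n,X)$, which in each simplicial degree is a \emph{finite} coproduct of copies of $X$ since $\Sigma_n$ is a finite group; hence each $\Sym^n(M)$ is built from finite coproducts and one geometric realisation, and $\Sym(M)$ is a countable coproduct of these — all countable colimits. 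It is worth stressing that one really does need the weak hypothesis \enquote{countable colimits} here rather than presentability: the forgetful functor $\opCAlg(\calC)\to\calC$ preserves all limits, but $\calC$ is not assumed presentable, so one cannot simply cite the adjoint functor theorem.

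Next I would promote $\Sym(M)$ to a commutative algebra object and establish the adjunction $\Map_{\opCAlg(\calC)}(\Sym(M),B)\simeq\Map_{\calC}(M,B)$. The hypothesis that $\otimes$ preserves colimits in each variable yields natural identifications $\Sym^n(M)\otimes\Sym^m(M)\simeq(M^{\otimes(n+m)})_{h(\Sigma_n\times\Sigma_m)}$, and composing with the map of homotopy orbits induced by the block inclusion $\Sigma_n\times\Sigma_m\hookrightarrow\Sigma_{n+m}$ gives multiplications $\Sym^n(M)\otimes\Sym^m(M)\to\Sym^{n+m}(M)$; the unit is $\mathbf 1=\Sym^0(M)\hookrightarrow\Sym(M)$. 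Organising these data into a coherently associative and commutative multiplication is what the operadic free-algebra construction of \cite[\S3.1.3]{HA} accomplishes: one forms the operadic left Kan extension along the map of $\infty$-operads $\mathrm{Triv}^\otimes\to\mathrm{Comm}^\otimes$, whose algebras in $\calC$ are, respectively, objects of $\calC$ and commutative algebras in $\calC$; this functor is by construction left adjoint to restriction — that is, to the forgetful functor — and one computes its underlying object to be the colimit, over the groupoid $\mathrm{Fin}^{\simeq}\simeq\coprod_{n\geq 0}B\Sigma_n$ of finite sets and bijections, of the diagram sending a finite set $S$ to $M^{\otimes S}$, which recovers the formula above. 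The only inputs this construction requires are the existence of these (countable) operadic colimits and the compatibility of $\otimes$ with them, both of which we are given.

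The step I expect to be the main obstacle is the coherence in the previous paragraph — assembling the termwise products $\Sym^n(M)\otimes\Sym^m(M)\to\Sym^{n+m}(M)$ into a single $\E_\infty$-multiplication compatibly with all higher operadic structure maps. This is exactly where the second hypothesis is used essentially and repeatedly: distributivity of $\otimes$ over the colimits in play is what lets one identify iterated tensor powers of $\Sym(M)$ with larger homotopy-orbit constructions, and hence recognise the multiplications together with their coherences; without it the formula for $\Sym$ would not even be multiplicative. A secondary, purely bookkeeping matter is to confirm that every colimit invoked — in particular the operadic colimits underlying the left Kan extension — is countable, so that it is available under the hypothesis; presenting each $\Sym^n$ through the bar construction, as above, makes this transparent.
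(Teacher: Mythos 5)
Your proposal is correct, and it is essentially the argument of the source the paper cites: the paper offers no proof of this statement beyond the reference to \cite[Prop.~3.1.3.13]{HA}, whose proof is exactly the operadic left Kan extension along $\mathrm{Triv}^\otimes\to\mathrm{Comm}^\otimes$ together with the identification of the underlying object as $\coprod_{n\geq 0}(M^{\otimes n})_{h\Sigma_n}$, using the countable-colimit and distributivity hypotheses just as you describe.
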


We will denote such a left adjoint by $\Sym_\calC \colon \calC \to \CAlg(\calC)$ and call it the \defi{symmetric algebra functor}.

If $0$ is an initial object of $\calC$, then $\Sym(0)$ is an initial object of $\CAlg(\calC)$, i.e., it is the unit algebra $\mathbf{1}$.
If in addition $0$ is also a terminal object of $\calC$ (in other words, it is a zero object), then by taking slices over $0$, the functor $\Sym_\calC$ lifts to a functor $\calC \to \CAlg^\aug(\calC)$.

\begin{example}
Let $A$ be an $\E_\infty$-ring spectrum.
We write $\Mod_A$ for $\Mod_A(\Sp)$, and $\CAlg_A$ for $\CAlg_A(\Sp)$.
As $\Sp$ has all small colimits, so does $\Mod_A$ by \cite[Cor.~4.2.3.5]{HA}, and so the above theorem yields a functor $\Sym_A \colon \Mod_A \to \CAlg_A^\aug$.
By \cite[Prop.~3.1.3.13]{HA}, the underlying spectrum of $\Sym_A(M)$ is given by
\[
    \bigoplus_{n\geq 0} (M^{\otimes_A n})_{h\Sigma_n}.
\]
Through \cite[Thm.~4.5.4.7]{HA}, this retrieves the definitions in a model for spectra of \cite[§§II.4, III.5]{EKMM}.

If $A \to B$ is a map of $\E_\infty$-ring spectra, then we have a natural equivalence (where $M$ is an $A$-module spectrum)
\[
    B \otimes_A \Sym_A(M) \simeq \Sym_B(B\otimes_A M).
\]
In the absolute case where $A$ is the sphere spectrum $\S$, we omit the subscript and simply write $\Sym$ for $\Sym_\S$.
\end{example}

\begin{example}
Let $E$ be a spectrum, and let $A$ be an $E$-local $\E_\infty$-ring spectrum.
We write $\mathop{L_E} \Mod_A$ for the $\infty$-category of $E$-local $A$-modules.
The symmetric algebra functor for the $\infty$-category $\calC = \mathop{L_E} \Mod_A$ is equivalent to the composite $\mathop{L_E} \Sym_A$ of $E$-localisation with the symmetric $A$-algebra functor.
Indeed, $\mathop{L_E} \Sym_A$ is left adjoint to the forgetful functor $\mathop{L_E} \CAlg_A \to \mathop{L_E}\Mod_A$.
\end{example}

We shall apply the above mostly in the case where $A = \KU_p$ and $E = \K(1)$.
In particular, we shall use the natural equivalence (where $Y$ is a spectrum)
\[
    \KU_p \hotimes \Sym(Y) \simeq \mathop{L_{\K(1)}} \Sym_{\KU_p}(\KU_p \hotimes Y).
\]

\begin{definition}
Let $\calC$ be a symmetric monoidal $\infty$-category.
Suppose that $\calC$ has all small limits.
Let $A \in \CAlg(\calC)$.
Recall from \cite[Thm.~5.1.5.6]{HTT} that the $\infty$-category of spaces $\Spaces$ is freely generated under small colimits by a point.
Using this, we define a functor
\[
    A^{(-)_+} \colon \Spaces^\op \to \CAlg_A(\calC)
\]
by requiring it to preserve small limits and to send a point to $A$.
If $X$ is a space, we call $A^{X_+}$ the \defi{$A$-cochains} of $X$.
\end{definition}

Taking slices under a point, the cochains functor also has a pointed version landing in augmented algebras:
\[
    A^{(-)_+} \colon \Spaces_*^\op \to \CAlg^\aug_A(\calC).
\]

\begin{proposition}
Let $\calC$ be a closed symmetric monoidal $\infty$-category.
Suppose that $\calC$ has all small limits and colimits.
Let $A \in \CAlg(\calC)$.
Then the composite functor
\[
    \begin{tikzcd}
        \Spaces^\op \arrow[r,"A^{(-)_+}"] & \CAlg_A(\calC) \arrow[r] & \Mod_A(\calC)
    \end{tikzcd}
\]
is equivalent to the functor $X\mapsto \map_{\Mod_A(\calC)}(A\otimes X,\ A)$, where $A \otimes X = \colim_X A$ denotes the tensoring of $\Mod_A(\calC)$ over $\Spaces$.
\end{proposition}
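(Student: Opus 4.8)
The plan is to exhibit both functors $\Spaces^{\op}\to\Mod_A(\calC)$ in the proposition as the \emph{unique} limit-preserving functor that sends a point to $A$, and then to invoke the universal property of $\Spaces$ as the free cocompletion of a point. Concretely, passing to opposite $\infty$-categories in \cite[\S5.1.5]{HTT} gives: for any $\infty$-category $\mathcal D$ with all small limits, evaluation at the point $\ast$ induces an equivalence between the $\infty$-category of limit-preserving functors $\Spaces^{\op}\to\mathcal D$ and $\mathcal D$ itself. So I would reduce to checking that each of the two functors preserves small limits and carries $\ast$ to an object equivalent, as an $A$-module, to $A$.

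I would use freely that, since $\calC$ is closed symmetric monoidal with all small limits and colimits, the same holds for $\Mod_A(\calC)$, with unit the object $A$; in particular the internal mapping object $\map_{\Mod_A(\calC)}(-,-)$ exists, and $\Mod_A(\calC)$ is canonically tensored over $\Spaces$ via $A\otimes X=\colim_X A$, with the tensoring preserving small colimits in the $\Spaces$-variable (see \cite{HA}).

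For the composite $\Spaces^{\op}\xrightarrow{A^{(-)_+}}\CAlg_A(\calC)\to\Mod_A(\calC)$: by its defining property $A^{(-)_+}$ preserves small limits and sends $\ast$ to $A$ with its tautological algebra structure, and the forgetful functor $\CAlg_A(\calC)\to\Mod_A(\calC)$ creates small limits since limits of commutative algebras are computed on underlying objects \cite{HA}; hence the composite preserves small limits and sends $\ast$ to $A$. For $X\mapsto\map_{\Mod_A(\calC)}(A\otimes X,A)$, I would write it as the composite
\[
\Spaces^{\op}\xrightarrow{\,(X\,\mapsto\,A\otimes X)^{\op}\,}\Mod_A(\calC)^{\op}\xrightarrow{\,\map_{\Mod_A(\calC)}(-,A)\,}\Mod_A(\calC),
\]
whose first arrow preserves small limits because $X\mapsto A\otimes X$ preserves small colimits, and whose second arrow preserves small limits because $\map_{\Mod_A(\calC)}(-,A)$ sends colimits to limits (closedness of the symmetric monoidal structure). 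Its value at $\ast$ is $\map_{\Mod_A(\calC)}(A\otimes\ast,A)\simeq\map_{\Mod_A(\calC)}(A,A)\simeq A$, the last equivalence because $A$ is the unit of $\Mod_A(\calC)$. Both functors then meet the hypotheses of the universal property above, hence are equivalent.

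This argument is formal once the structural facts about $\Mod_A(\calC)$ are in hand, so the real ``obstacle'' is not a genuine difficulty but bookkeeping: correctly citing that $\Mod_A(\calC)$ inherits small limits and colimits from $\calC$, that it is closed symmetric monoidal with unit $A$, and that the forgetful functor from commutative algebras creates limits.
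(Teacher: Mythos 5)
Your proposal is correct and follows essentially the same route as the paper: both functors are shown to be limit-preserving functors $\Spaces^\op \to \Mod_A(\calC)$ sending the point to $A$ (the paper cites \cite[Cor.~3.2.2.5]{HA} for limit-preservation of the forgetful functor), and the conclusion follows from the universal property of $\Spaces$ as the free cocompletion of a point. Your write-up simply spells out the formal bookkeeping (tensoring preserves colimits, internal hom into $A$ turns them into limits, $\map_{\Mod_A(\calC)}(A,A)\simeq A$) that the paper leaves implicit.
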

\begin{proof}
The functor $\CAlg_A(\calC) \to \Mod_A(\calC)$ preserves all small limits by \cite[Cor.~3.2.2.5]{HA}.
Therefore both $A^{(-)_+}$ and $\map_{\Mod_A(\calC)}(A\otimes -,\ A)$ are limit-preserving functors $\Spaces^\op \to \Mod_A(\calC)$ that send a point to $A$.
\end{proof}

\begin{example}
If $A$ is an $\E_\infty$-ring spectrum and $X$ is a space, then $A^{X_+}$ is an $\E_\infty$-$A$-algebra spectrum refining the mapping spectrum $\map(\Sigma^\infty_+X,\ A)$.
We in particular obtain an isomorphism
\[
    \mathop{\pi_{-*}} A^{X_+} \cong A^*(X).
\]
If $E$ is a spectrum and $A$ is $E$-local, then $A^{X_+}$ is $E$-local for every space $X$.
Indeed, the forgetful functor $\CAlg_A \to \Mod_A$ preserves limits by \cite[Cor.~3.2.2.5]{HA}, and $E$-local modules are closed under limits in $\Mod_A$.
\end{example}

\subsection{Morava modules}
\label{ssec:morava_modules}
We will need the notion of \emph{derived $p$-completion} (also called \emph{$L$-completion}) of $\Z/2$-graded $\Z_p$-modules.
The functor of ordinary $p$-completion $(-)_p^\wedge$ on $\Mod_{\Z_p}^*$ has two left derived functors, denoted $L_0$ and $L_1$.
Here $L_0$ is right exact and $L_1$ is left exact.
We call $L_0 M$ the \defi{derived $p$-completion} of $M$.
In case $M$ has bounded $p$-torsion, this is naturally isomorphic to $M_p^\wedge$.
A module $M$ is called \defi{derived $p$-complete} if the natural map $M \to L_0 M$ is an isomorphism.
We refer to Hovey and Strickland \cite[App.~A]{hovey_strickland_Ktheories_localisation} and Barthel and Frankland \cite[App.~A]{barthel_frankland_power_op_MoravaE} for an introduction to the theory of derived $p$-completion.
We write
\[
    \Modh^*_{\Z_p} \qquad \text{and}\qquad \CAlgh^*_{\Z_p}
\]
for the full subcategories of $\Mod_{\Z_p}^*$ and $\CAlg_{\Z_p}^*$ on the derived $p$-complete modules and algebras, respectively.

\begin{definition}[\cite{goerss_hopkins_moduli_problems_structured}, Def.~2.2.1]
A \defi{$p$-adic Morava module} is a derived $p$-complete topological $\Z/2$-graded $\Z_p$-module $M$ with a continuous action of $\Z_p^\times$ by degree-preserving maps, such that the quotient $M/p$ is a discrete $\Z_p^\times$-module.
A \defi{morphism} of Morava modules is a continuous morphism of graded $\Z_p$-modules that intertwines the action of $\Z_p^\times$.
We write $\MorMod_p$ for the category of $p$-adic Morava modules.
\end{definition}

Often we will write the action of $k \in \Z_p^\times$ on a Morava module $M$ as $\psi^k$ (or by $\psi^k_M$ if we wish to emphasise the module $M$) and call these the \defi{Adams operations} of $M$.
Often we will also omit the prime $p$ and refer to $p$-adic Morava modules simply as \emph{Morava modules}.

\begin{remark}
In \cite{Bous_v1_Hspaces}, Bousfield uses the term \emph{stable Adams module} (see Definition~2.6 of op.\ cit.) for a closely related concept.
If $M$ is finitely generated as a $\Z_p$-module, then the concept coincides with that of a $p$-adic Morava module.
Bousfield uses the term \emph{Adams module} to mean a stable Adams module which also has a compatible $\psi^p$ operation.
\end{remark}

\begin{proposition}
\label{prop:Kthy_of_spectrum_is_Morava}
If $X$ is a $\K(1)$-local $\KU_p$-module spectrum, then $\pi_*X$ is naturally a $p$-adic Morava module.
\end{proposition}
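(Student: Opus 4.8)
The plan is to exhibit the $\Z_p^\times$-action on $\pi_* X$ by constructing an action of $\Z_p^\times$ on the object $X$ in the $\infty$-category $\mathop{L_{\K(1)}}\Mod_{\KU_p}$, and then to check that passing to homotopy groups yields the required structure of a $p$-adic Morava module. The starting point is the classical fact that $\Z_p^\times$ acts on $p$-completed K-theory through the $p$-adic Adams operations: more precisely, $\KU_p$ can be obtained from the Adams summand or from $\KU$ via $K(1)$-localisation, and there is a continuous action of $\Z_p^\times$ on the $\E_\infty$-ring spectrum $\KU_p$ realising the stable Adams operations $\psi^k$. (One reference is the theory of the Lubin--Tate/Morava $E$-theory $E_1 = \KU_p$ with its Morava stabiliser group $\mathbb{G}_1 \cong \Z_p^\times$ acting by $\E_\infty$-automorphisms, as in Goerss--Hopkins; another is Bousfield's treatment in \cite{Bous_v1_Hspaces}.) First I would record this action as a functor $B\Z_p^\times \to \CAlg(\mathrm{Sp})$, or rather its $\K(1)$-local refinement, with value $\KU_p$.

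Next, given such an action on $\KU_p$, one gets an induced action on the $\infty$-category $\mathop{L_{\K(1)}}\Mod_{\KU_p}$ by transport of structure: each $\psi^k$ is an $\E_\infty$-automorphism of $\KU_p$, hence induces an autoequivalence $(\psi^k)_* = \KU_p \hotimes_{\KU_p, \psi^k} (-)$ of $\mathop{L_{\K(1)}}\Mod_{\KU_p}$, compatibly in $k$. Then for a fixed $\K(1)$-local $\KU_p$-module $X$ I would consider the module $(\psi^k)_* X$; because $\psi^k$ acts on $\pi_* \KU_p = \Z_p$ by a $\Z_p$-algebra automorphism, and in fact the identity on $\pi_0$ (the Adams operations fix the unit), one gets a canonical identification of the underlying spectrum of $(\psi^k)_* X$ with that of $X$, and hence a self-map of $\pi_* X$ — this is the operation I will call $\psi^k_{\pi_* X}$. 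Continuity and compatibility of these in $k$ follow from the corresponding properties of the action on $\KU_p$, and the quotient $\pi_*(X)/p$ being a discrete $\Z_p^\times$-module follows because $\pi_*(X/p)$ is a $\pi_*(\KU_p/p) = \mathbb{F}_p$-module and the Adams action there factors through a finite quotient on each element (the standard smoothness/discreteness statement for Morava modules).

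The remaining point is that $\pi_* X$ is derived $p$-complete: this holds because $X$ is $\K(1)$-local, and a $\K(1)$-local $\KU_p$-module spectrum has derived-$p$-complete homotopy groups — one can see this from the fibre sequence computing $L_{\K(1)}$ in terms of $p$-completion and telescopic localisation at height $1$, or directly from the fact that $\KU_p$ itself has $\pi_* = \Z_p$ which is derived $p$-complete and $X$ is built from $\KU_p$ under $\K(1)$-local colimits, combined with the closure of $\Modh^*_{\Z_p}$ under the relevant limits and the Milnor sequence. I expect the main obstacle to be purely bookkeeping: making precise the claim that the $\E_\infty$-action of $\Z_p^\times$ on $\KU_p$ is \emph{continuous} in the appropriate sense (so that the resulting action on $\pi_* X$ is continuous for the $p$-adic topology) and that it induces the classically-defined Adams operations on homotopy groups, rather than any deep homotopy-theoretic input. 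Naturality in $X$ is then automatic since every step — transport of structure along $\psi^k$, passage to homotopy groups — is functorial.
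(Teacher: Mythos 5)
There is a genuine gap at the heart of your construction, namely in the step where you pass from the twisted module $(\psi^k)_* X = \KU_p \hotimes_{\KU_p,\psi^k} X$ to a self-map of $\pi_* X$. Base change along the automorphism $\psi^k$ does identify the \emph{underlying spectrum} of $(\psi^k)_* X$ with that of $X$, but this produces a single isomorphism $\pi_*((\psi^k)_* X) \cong \pi_* X$ between the homotopy groups of two different objects, not an endomorphism of $\pi_* X$; composing that identification with its own inverse only gives the identity. To obtain a (generally nontrivial) operation you would need, in addition, a $\KU_p$-linear equivalence $(\psi^k)_* X \simeq X$, i.e.\ a $\psi^k$-semilinear self-equivalence of $X$, coherently in $k$ --- and an abstract $\K(1)$-local $\KU_p$-module carries no such data. (What is true is that $\Z_p^\times$ acts on the $\infty$-category $\mathop{L_{\K(1)}}\Mod_{\KU_p}$; individual objects are not equivariant.) Your appeal to ``$\psi^k$ is the identity on $\pi_0$'' is a red herring: $\psi^k$ acts on $\pi_2\KU_p$ by multiplication by $k$, which is exactly why the twist is not canonically trivial and why, for example, the Bott equivalence $\Sigma^2\KU_p \simeq \KU_p$ of $\KU_p$-modules is \emph{not} compatible with the Adams operations on the two free presentations $\KU_p\otimes S^2$ and $\KU_p\otimes S^0$ (they differ by a factor of $k$ on $\pi_0$ in the $\Z/2$-graded convention). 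This shows that no argument by bare transport of structure along module equivalences can produce the desired operations; as stated, your recipe outputs either the identity operation or nothing well-defined, and in particular it cannot recover the standard nontrivial Morava module structure on $\compKU(Y)=\pi_*(\KU_p\hotimes Y)$ or the classical Adams operations on $\KU_p^*(X)$, which is precisely what the proposition is needed for later in the paper.

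The paper's route is different and avoids this issue: it first cites Goerss--Hopkins for the Morava module structure on the completed homology $\compKU(Y)$ of a \emph{spectrum} $Y$ --- there the semilinear self-map $\psi^k\otimes \mathrm{id}_Y$ of $\KU_p\hotimes Y$ is available, since the $\KU_p$-factor is functorially in place --- and then transfers the operations to $\pi_* M$ for a $\K(1)$-local $\KU_p$-module $M$ by the reasoning of Barthel--Frankland (their Prop.~6.8), which exploits the module action map rather than an action of $\Z_p^\times$ on the category of modules. If you want to salvage your approach, you must either restrict to modules equipped with the extra functoriality (free modules $\KU_p\hotimes Y$, cochains $\KU_p^{X_+}$, where $\psi^k\otimes\mathrm{id}$ resp.\ $\mathrm{map}(\mathrm{id},\psi^k)$ supply the semilinear maps) or explain where the required equivariance data on a general $M$ comes from. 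Your remarks on derived $p$-completeness of $\pi_* M$ are fine in spirit (this is in Hovey--Strickland), but they are secondary to the missing construction of the operations themselves.
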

\begin{proof}
Goerss and Hopkins \cite[Prop.~2.2.2]{goerss_hopkins_moduli_problems_structured} give a Morava module structure to the completed $\KU_p$-homology $\compKU(X) = \pi_*(\KU_p\hotimes X)$ of a spectrum $X$, natural in maps of spectra.
Using the same reasoning as in \cite[Prop.~6.8]{barthel_frankland_power_op_MoravaE}, this induces Adams operations on $\pi_* M$ for any $\K(1)$-local $\KU_p$-module spectrum.
\end{proof}

\subsection{\texorpdfstring{$\theta$}{Theta}-algebras}
\label{ssec:theta_algebras}

\begin{definition}[\cite{goerss_hopkins_moduli_problems_structured}, Def.~2.2.3]
\label{def:theta_alg}
A \defi{$p$-adic $\theta$-algebra with Adams operations} is a graded-commutative $\Z/2$-graded $\Z_p$-algebra $R$ with the structure of a Morava module, together with continuous maps
\[
    \theta^p \colon R_0 \to R_0 \qquad \text{and} \qquad \theta^p \colon R_1 \to R_1,
\]
satisfying the following conditions.
\begin{condenum}
    \item For all $k\in \Z_p^\times$, the operation $\psi^k$ on $R$ is $\Z_p$-linear, and if $x,y\in R$ are homogeneous, then
    \[
        \psi^k(xy) = \begin{cases}
        \ \psi^k(x)\cdot \psi^k(y) &\abs{x} = 0 \quad \text{or}\quad \abs{y} = 0,\\
        \ \frac{1}{k}\cdot \psi^k(x) \cdot \psi^k(y) &\abs{x} = \abs{y} = 1.
        \end{cases}
    \]
    \item We have $\theta^p (1) = 0$.
    \item For all $k\in \Z_p^\times$, we have $\theta^p \circ \psi^k = \psi^k \circ \theta^p$.
    \item\label{cond:additivity_theta} If $x,y\in R$ are homogeneous, then
    \[
        \theta^p(x+y) = \begin{cases}
        \ \theta^p(x) + \theta^p(y) - \frac{1}{p}\sum_{i=1}^{p-1} \binom{p}{i} \cdot x^iy^{p-i} &\abs{x} = \abs{y} = 0,\\
        \ \theta^p(x) + \theta^p(y) &\abs{x} = \abs{y} = 1.
        \end{cases}
    \]
    \item If $x,y\in R$ are homogeneous, then
    \[
        \theta^p(xy) = \begin{cases}
        \ \theta^p(x) \cdot y^p + x^p \cdot \theta^p(y) + p\cdot \theta^p(x)\cdot \theta^p(y) &\abs{x} = 0 \quad \text{or}\quad \abs{y} = 0,\\
        \ \theta^p(x) \cdot \theta^p(y) &\abs{x} = \abs{y} = 1.
        \end{cases}
    \]
\end{condenum}
A \defi{morphism} of $p$-adic $\theta$-algebras with Adams operations is a morphism of graded $\Z_p$-algebras that is a morphism of Morava modules and that intertwines the action of $\theta^p$.
We write $\Alg_{\theta,p}$ for the category of $p$-adic $\theta$-algebras with Adams operations.
\end{definition}

As with Morava modules, often we will omit the prime $p$ and refer to these algebras as \emph{$\theta$-algebras with Adams operations}, or sometimes even simply as \emph{$\theta$-algebras}.
In that case we may also write $\theta$ for $\theta^p$.

We extend $\theta^p$ to a map $R \to R$ by putting $\theta^p(x+y) = \theta^p(x) + \theta^p(y)$ for $x$ even and $y$ odd.
We define $\psi^p \colon R \to R$ by putting $\psi^p(x) := x^p + p \cdot \theta^p(x)$.
If $R$ has no $p$-torsion, then $\theta^p$ can be recovered from $\psi^p$.
The conditions on $\theta^p$ ensure that $\psi^p$ is an additive operation satisfying a multiplicative law given similar to (i) above: if $x,y\in R$ are homogeneous, then
\[
    \psi^p(x)\cdot \psi^p(y) = \begin{cases}
        \ \psi^p(xy) &\abs{x} = 0 \quad \text{or}\quad \abs{y} = 0,\\
        \ p\cdot \psi^p(xy) &\abs{x} = \abs{y} = 1.
    \end{cases}
\]
With this notation, we can rewrite part of condition (v) above as stating that
\begin{equation}
    \label{eq:multiplicativity_theta}
    \theta^p(xy) = \psi^p(x) \cdot \theta^p(y)
\end{equation}
when $x$ is even and $y$ is odd.

\begin{remark}
\label{rmk:bousfield_notation_psi_theta}
Some denote the map $\theta^p \colon R_1 \to R_1$ by either $\psi^p$ or $\psi$; note that these differ from $\psi^p\colon R_1 \to R_1$ as defined above by a factor of $p$.
See, e.g., \cite{Bous_Ktheory_infloop, Bous_Ktheory_Hspaces, Bous_v1_Hspaces, barthel_frankland_power_op_MoravaE}.
\end{remark}

\begin{remark}
\label{rmk:difference_bousfield_meaning_padic}
Bousfield uses the term \emph{$p$-adic $\theta$-algebra} to mean something slightly different than the above, requiring $p$-completeness in a stronger sense than derived $p$-completeness.
See \cite[§6]{Bous_Ktheory_infloop} or \cite[§1]{Bous_Ktheory_Hspaces} for precise definitions.
Every $p$-adic $\theta$-algebra with Adams operations in the sense of Bousfield determines a $\theta$-algebra in the sense of Definition~\ref{def:theta_alg}.
\end{remark}

\begin{example}
The ring $\Z_p$ can be given the structure of a $\theta$-algebra with Adams operations by putting $\psi^k$ equal to the identity for all $k\in\Z_p$.
This determines $\theta^p$ as well, due to the absence of $p$-torsion.
We shall always regard $\Z_p$ as having these Adams operations.
\end{example}

\begin{remark}
\label{rmk:theta_linear_on_odd_elements}
If $R$ is a $\theta$-algebra augmented over $\Z_p$, then $\theta^p$ is $\Z_p$-linear on odd elements of $R$: this follows from Equation~\eqref{eq:multiplicativity_theta}.
\end{remark}

\begin{theorem}
If $A$ is a $\K(1)$-local $\mathbf{H}_\infty$-$\KU_p$-algebra spectrum, then $\pi_*A$ is naturally a $p$-adic $\theta$-algebra with Adams operations.
\end{theorem}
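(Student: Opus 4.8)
The plan is to build the $\theta$-algebra structure on $\pi_*A$ from two inputs, and then to check conditions~(i)--(v) of Definition~\ref{def:theta_alg} functorially in $A$. The first input is the Morava module structure supplied by Proposition~\ref{prop:Kthy_of_spectrum_is_Morava}, which already furnishes the Adams operations $\psi^k$ for $k\in\Z_p^\times$ together with derived $p$-completeness; the second is the $\mathbf{H}_\infty$-structure on $A$, which furnishes the additional nonlinear operation $\theta^p$ through power operations in $\K(1)$-local $\KU_p$-homology. For the formal part: as $A$ is in particular a homotopy-commutative $\KU_p$-algebra spectrum, $\pi_*A$ is a graded-commutative $\Z/2$-graded $\pi_*\KU_p\cong\Z_p$-algebra, and as $A$ is $\K(1)$-local, Proposition~\ref{prop:Kthy_of_spectrum_is_Morava} makes $\pi_*A$ a $p$-adic Morava module, so $(\pi_*A)/p$ is discrete, $\pi_*A$ is derived $p$-complete, and each $\psi^k$ is continuous and degree-preserving. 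The multiplicativity of $\psi^k$ in condition~(i), including the factor $\tfrac1k$ when both arguments are odd, then follows by naturality from the corresponding classical identities for $\KU_p$ and for its suspension $\Sigma\KU_p$ --- concretely, from the interaction of $\psi^k$ with the Bott class --- since the $\Z/2$-graded multiplication and the $\psi^k$ on $\pi_*A$ are assembled from maps of $\KU_p$-modules. I would postpone conditions~(ii) and~(iii) until $\theta^p$ is in hand; (iii), that $\theta^p$ commutes with every $\psi^k$, will then be automatic, since the operations $\psi^k$ are ring operations on completed $\KU_p$-homology and hence commute with the power operations defining $\theta^p$.

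Next I would construct $\theta^p$. The $\mathbf{H}_\infty$-structure provides, for each $n$, a structure map $\xi_n\colon (E\Sigma_n)_+\wedge_{\Sigma_n}A^{\wedge_{\KU_p}n}\to A$ of $\KU_p$-modules, compatible with the analogous maps for the unit $\KU_p$ and with the smash product. Evaluating the case $n=p$ on a class $x\in\pi_jA$ with $j\in\{0,1\}$ and $\K(1)$-localising produces a ``total $p$-th power operation'', living in $\pi_j$ of a mapping $\KU_p$-module out of the $\K(1)$-localised $p$-th extended power of $\KU_p$ (when $j=0$) or of $\Sigma\KU_p$ (when $j=1$). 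McClure's computation of the completed $\KU_p$-homology of these extended powers --- equivalently, of suitable Thom spectra over $B\Sigma_p$ --- together with the restriction map to the trivial subgroup and the transfer maps from the Young subgroups $\Sigma_i\times\Sigma_{p-i}$, shows that the total operation decomposes as the ordinary power $x^p$ plus a single new piece, and this new piece is, after the standard normalisation $\psi^p(x):=x^p+p\,\theta^p(x)$, the value $\theta^p(x)$; in even degrees the normalisation is unambiguous because the new piece is congruent to $x^p$ modulo $p$. This produces the continuous maps $\theta^p\colon\pi_0A\to\pi_0A$ and $\theta^p\colon\pi_1A\to\pi_1A$. I would treat this power operations input as a black box, citing McClure together with the accounts in \cite{goerss_hopkins_moduli_problems_structured} and \cite{barthel_frankland_power_op_MoravaE}; note that only the $\mathbf{H}_\infty$-structure, not a full $\E_\infty$-structure, is needed here.

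Finally I would verify conditions~(ii), (iv) and~(v), together with naturality. Each is a formal consequence of an $\mathbf{H}_\infty$-axiom combined with the same K-theoretic computation for symmetric groups: the behaviour of the extended power on the unit gives $\theta^p(1)=0$; its behaviour on a sum, expressed through the transfers along $\Sigma_i\times\Sigma_{p-i}\hookrightarrow\Sigma_p$, gives the Cartan-type additivity of condition~(iv) --- with correction term $-\tfrac1p\sum_{i=1}^{p-1}\binom{p}{i}x^iy^{p-i}$ in even degrees and no correction in odd degrees --- and its behaviour on a product gives the multiplicativity of condition~(v). The odd-degree clauses of~(i), (iv) and~(v) again carry the Bott-twist factors, checked by the naturality argument applied to $\Sigma\KU_p$. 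Naturality of the assignment $A\mapsto\pi_*A$ in maps of $\K(1)$-local $\mathbf{H}_\infty$-$\KU_p$-algebras is then clear, since every structure map in play --- the ring structure, the $\psi^k$, and the $\xi_n$ --- is natural, and both $\pi_*(-)$ and $\K(1)$-localisation are functorial.

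The main obstacle is the construction of $\theta^p$, and with it the additivity formula of condition~(iv): both rest on McClure's computation of power operations in $p$-complete K-theory, i.e.\ on the structure of the completed $\KU_p$-(co)homology of $B\Sigma_p$ and of the restriction and transfer maps relating it to that of the trivial subgroup and of the Young subgroups $\Sigma_i\times\Sigma_{p-i}$. Once that computation is granted, the remaining verifications are routine bookkeeping with the $\mathbf{H}_\infty$-axioms and naturality.
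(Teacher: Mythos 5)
Your proposal is correct and follows essentially the same route as the paper: both obtain the Adams operations from the Morava-module structure of Proposition~\ref{prop:Kthy_of_spectrum_is_Morava} and the operation $\theta^p$ from McClure's $\K(1)$-local power operations as packaged by Barthel--Frankland, delegating the verification of the $\theta$-algebra axioms to that source. The one point where the paper is more careful is condition~(iii): the commutation of $\theta^p$ with the $\psi^k$ is justified there by the fact that the Adams operations on $\KU_p$ are $\E_\infty$-ring maps, whereas your appeal to the $\psi^k$ being ``ring operations'' on completed homology is not by itself enough to make them commute with power operations.
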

\begin{proof}
This follows from the work by McClure \cite[Ch.~IX]{H_infty_book}, as explained by Barthel and Frankland \cite[§6]{barthel_frankland_power_op_MoravaE}.
Specifically, in Proposition~6.8 of op.\ cit.\ they show that McClure's operation $Q$ extends to an operation on all $\K(1)$-local $\mathbf{H}_\infty$-$\KU_p$-algebras.
Then in §6.3, they show that this operation $Q$ satisfies the conditions on $\theta^p$ in a $\theta$-algebra.
The Adams operations from Proposition~\ref{prop:Kthy_of_spectrum_is_Morava} above commute with $Q$, because the Adams operations on $\KU_p$ are $\E_\infty$-ring maps.
This gives $\pi_* A$ the structure of a $p$-adic $\theta$-algebra with Adams operations in the sense of Definition~\ref{def:theta_alg}.
\end{proof}

\begin{corollary}
\leavevmode 
\begin{thmenum}
    \item If $A$ is an $\E_\infty$-ring spectrum, then $\compKU(A)$ is naturally a $p$-adic $\theta$-algebra with Adams operations.
    \item If $X$ is a space, then $\KU_p^*(X)$ is naturally a $p$-adic $\theta$-algebra with Adams operations.
\end{thmenum}
\end{corollary}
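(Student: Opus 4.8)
The plan is to deduce both parts directly from the preceding theorem by exhibiting, in each case, a $\K(1)$-local $\mathbf{H}_\infty$-$\KU_p$-algebra whose homotopy ring is the one under consideration, and then transporting the $\theta$-algebra structure along the relevant identification.

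For part~(i), let $A$ be an $\E_\infty$-ring spectrum and consider $\KU_p \hotimes A = \mathop{L_{\K(1)}}(\KU_p \otimes A)$. Since the smash product of $\E_\infty$-ring spectra is again an $\E_\infty$-ring spectrum, $\KU_p \otimes A$ is an $\E_\infty$-$\KU_p$-algebra, and $\K(1)$-localisation carries it to a $\K(1)$-local $\E_\infty$-$\KU_p$-algebra; passing to the underlying $\mathbf{H}_\infty$-structure, $\KU_p \hotimes A$ is a $\K(1)$-local $\mathbf{H}_\infty$-$\KU_p$-algebra. The theorem therefore endows $\pi_*(\KU_p \hotimes A) = \compKU(A)$ with the structure of a $p$-adic $\theta$-algebra with Adams operations. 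Naturality in $A$ holds because $A \mapsto \KU_p \hotimes A$ is functorial on $\E_\infty$-ring spectra and the theorem's output is natural in the $\mathbf{H}_\infty$-$\KU_p$-algebra.

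For part~(ii), let $X$ be a space and consider the $\KU_p$-cochains $\KU_p^{X_+}$, which is an $\E_\infty$-$\KU_p$-algebra spectrum by §\ref{ssec:higher_algebra}. Since $\KU_p$ is $\K(1)$-local and $\K(1)$-local algebras are closed under limits formed in $\CAlg_{\KU_p}$, the cochain algebra $\KU_p^{X_+}$ is already $\K(1)$-local, hence a $\K(1)$-local $\mathbf{H}_\infty$-$\KU_p$-algebra. The theorem then makes $\pi_* \KU_p^{X_+}$ a $p$-adic $\theta$-algebra with Adams operations, and under the identification $\pi_{-*}\KU_p^{X_+} \cong \KU_p^*(X)$ (recalling that we grade K-theory over $\Z/2$, so the sign on $*$ is immaterial) this transports the structure to $\KU_p^*(X)$, naturally in $X$ because the cochains functor $\Spaces^\op \to \CAlg_{\KU_p}$ is natural.

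There is no real obstacle here beyond bookkeeping; the only points requiring care are that $\KU_p$ itself is $\K(1)$-local (so that $\KU_p^{X_+}$ needs no further localisation), that the passage from $\E_\infty$- to $\mathbf{H}_\infty$-structures behaves as expected, and that the Morava-module structure produced by Proposition~\ref{prop:Kthy_of_spectrum_is_Morava} — which on $\compKU(A)$ and on $\KU_p^*(X)$ is the usual action of the $p$-adic Adams operations — is the one implicitly used in the statement of the preceding theorem. Each of these is either standard or was already verified in the course of the preceding proofs.
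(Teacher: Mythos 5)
Your argument is exactly the paper's: in each case you produce a $\K(1)$-local $\E_\infty$- (hence $\mathbf{H}_\infty$-) $\KU_p$-algebra — $\KU_p\hotimes A$ for (i) and $\KU_p^{X_+}$ for (ii), the latter being $\K(1)$-local because $\KU_p$ is — and apply the preceding theorem. The paper's proof is just a terser version of the same two observations, so your proposal is correct and matches it.
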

\begin{proof}
If $A$ is an $\E_\infty$-ring spectrum, then $\KU_p\hotimes A$ is a $\K(1)$-local $\E_\infty$-$\KU_p$-algebra.
If $X$ is a space, then $\KU_p^{X_+}$ is an $\E_\infty$-$\KU_p$-algebra, and it is $\K(1)$-local because $\KU_p$ is.
\end{proof}

The forgetful functor $\Alg_{\theta,p} \to \MorMod_p$ admits a left adjoint $\Free_{\theta,p}$, the \defi{free $\theta$-algebra} functor.
We will only construct this free $\theta$-algebra in the following special case, which is all we need.
\begin{example}
\label{ex:free_theta_alg_odd_generators}
Let $G$ be a Morava module concentrated in odd degree.
We can form $\Free_\theta[G]$ as follows.
Let $FG$ denote the module $G\oplus G \oplus \dotsb$, which inherits Adams operations from $G$.
We topologise $FG$ by giving it the coproduct topology; see \cite[§2]{higginsCoproductsTopologicalAbelian1977}.
(Note that $FG$ is not yet a Morava module, as it is not derived $p$-complete when $G$ is nonzero.)
Define a $\Z_p$-linear homomorphism $\theta^p \colon FG \to FG$ by shifting each copy of $G$ one to the right.
Now define as algebras
\[
    \Free_\theta[G] := \mathop{L_0} \Lambda_{\Z_p}[FG].
\]
The Adams operations $\psi^k$ are induced from the Adams operations on $FG$.
The homomorphism $\theta^p$ on $FG$ uniquely determines an operation $\theta^p$ on $\Free_\theta[G]$ that turns it into a $p$-adic $\theta$-algebra.

Let $R$ be a $\theta$-algebra and $f\colon G \to R$ a morphism of Morava modules.
Then the induced morphism $\Free_\theta[G] \to R$ can be described as follows.
First define a map $Ff\colon FG \to R$ given by applying the map $(\theta^p_R)^{\circ t} \circ f$ to the $t$-th copy of $G$.
Note that $Ff$ is a $\Z_p$-module homomorphism that intertwines the Adams operations.
The map $Ff$ induces a map $\mathop{L_0} \Lambda [FG] \to R$, which is the map $\Free_\theta[G] \to R$.
\end{example}

\begin{remark}
Due to Remark~\ref{rmk:difference_bousfield_meaning_padic}, the free $\theta$-algebras introduced by Bousfield differ from the ones of Example~\ref{ex:free_theta_alg_odd_generators}.
See §\ref{ssec:review_Bousfield_v1} for a review of Bousfield's free $\theta$-algebra on a Morava module concentrated in odd degree.
\end{remark}

\begin{theorem}[McClure]
\label{thm:mcclure}
Let $Y$ be a spectrum such that $\compKU(Y)$ is a flat $\Z/2$-graded $\Z_p$-module.
Then the natural map
\[
    \Free_{\theta,p}\sqbr*{\compKU(Y)} \to \compKU(\Sym(Y))
\]
induced by $\compKU(Y) \to \compKU(\Sym(Y))$ is an isomorphism of $p$-adic $\theta$-algebras with Adams operations.
Moreover, this isomorphism is compatible with the free-forgetful adjunctions: if $A$ is an $\E_\infty$-ring spectrum, then the diagram
\[
    \begin{tikzcd}
        \Map_{\CAlg(\Sp)}(\Sym(Y),\ A) \arrow[r,"\simeq"] \arrow[d,"\compKU"'] & \Map_{\Sp}(Y, A) \arrow[d,"\compKU"]\\
        \Hom_\theta\br*{\Free_{\theta,p}\sqbr*{\compKU(Y)},\ \compKU(A)} \arrow[r,"\cong"] & \Hom\br*{\compKU(Y),\ \compKU(A)}
    \end{tikzcd}
\]
commutes up to natural isomorphism.
\end{theorem}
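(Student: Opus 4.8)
The plan is to reduce the computation to McClure's calculation of the $K$-homology of free $\E_\infty$-ring spectra on spheres, then to bootstrap from there to an arbitrary $Y$ with flat $\compKU(Y)$ by a splitting argument, and finally to extract the compatibility with the free--forgetful adjunctions from the universal properties in play. The one genuinely substantial input --- the computation for a single cell, and the identification of the operation it produces with $\theta^p$ --- is McClure's theorem \cite[Ch.~IX]{H_infty_book}, in the form reworked by Barthel and Frankland \cite[§6]{barthel_frankland_power_op_MoravaE}; everything else is a matter of organising the reduction.

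First I would note that, by the equivalence $\KU_p\hotimes\Sym(Y)\simeq\mathop{L_{\K(1)}}\Sym_{\KU_p}(\KU_p\hotimes Y)$ recorded above, $\compKU(\Sym(Y))$ depends only on the $\KU_p$-module $\KU_p\hotimes Y$, hence only on $\compKU(Y)$ once that module is flat: a flat derived $p$-complete $\Z_p$-module is pro-free \cite[App.~A]{hovey_strickland_Ktheories_localisation}, so such a $\KU_p$-module splits $\K(1)$-locally as a (localised) wedge of copies of $\KU_p\simeq\KU_p\hotimes\S^0$ and $\Sigma\KU_p\simeq\KU_p\hotimes\S^1$. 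Both sides of the theorem turn this wedge decomposition into a coproduct: the left adjoint $\Sym_{\KU_p}$ sends the wedge to the corresponding (localised) relative tensor product over $\KU_p$ of the symmetric algebras on the cells, while the left adjoint $\Free_{\theta,p}$ sends the corresponding direct sum of Morava modules to the coproduct of $\theta$-algebras, namely the ($L_0$-completed) tensor product over $\Z_p$. (Here one must check that the completion functor $L_0$ entering these identifications --- it does not commute with arbitrary direct sums --- is harmless on the pro-free modules at hand; this can be seen from an explicit description of $\Free_{\theta,p}$ such as the one of Example~\ref{ex:free_theta_alg_odd_generators} in the odd-degree case, or directly from the adjunction.) This reduces the theorem to the case $\compKU(Y)\cong\compKU(\S^a)$ with $a\in\{0,1\}$ --- the only two up to the $\Z/2$-periodicity of $\KU_p$ --- which is exactly McClure's computation of $\compKU\big(\bigvee_{n\geq 0} D_n(\S^a)\big)$, together with the Barthel--Frankland identification of the operation induced by the $p$-th extended power with $\theta^p$.

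For the second assertion I would observe that the isomorphism just produced is natural in $Y$ and, by construction, carries the unit $\compKU(Y)\to\compKU(\Sym(Y))$ of the $(\Sym,\ \mathrm{forget})$-adjunction to the unit of the $(\Free_{\theta,p},\ \mathrm{forget})$-adjunction. Granting that, commutativity of the square is formal: chasing a map $g\colon\Sym(Y)\to A$ of $\E_\infty$-rings around either side produces the homomorphism obtained by precomposing $\compKU(g)$ with the relevant unit, so it suffices to check the two composites agree on the universal example $A=\Sym(Y)$, $g=\mathrm{id}$, where both return the common unit; this is the Yoneda lemma for the representable functor $\Map_{\opCAlg(\Sp)}(\Sym(Y),-)$.

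The step I expect to be the main obstacle is the reduction in the middle paragraph: although McClure's theorem supplies the essential content, one has to handle the derived $p$-completion $L_0$ with care when passing from spheres to a general $Y$ with flat completed $K$-homology, and one has to be sure that the abstract operation $\theta^p$ coming from the $\theta$-algebra structure recalled above is literally McClure's extended-power operation, and not merely an operation satisfying the same list of identities.
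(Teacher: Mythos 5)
Your proposal is sound and rests on the same essential input as the paper --- McClure's height-one computation as repackaged by Barthel--Frankland --- but it takes a genuinely different organisational route. The paper's proof is a two-citation argument: \cite[Prop.~3.18]{barthel_frankland_power_op_MoravaE} already establishes, at any height and for any $M$ with flat homotopy, that the completed algebraic monad $\hat{\mathbf{T}}$ computes $\pi_*$ of the $\K(h)$-local symmetric algebra, and \cite[Thm.~6.14]{barthel_frankland_power_op_MoravaE} identifies $\hat{\mathbf{T}}$ at height $1$ with the free $\theta$-algebra monad; the compatibility with the free--forgetful adjunctions then comes for free from the monadic formulation. You instead re-derive the flat case by hand: split the pro-free module $\compKU(Y)$ into cells, use that both $\mathop{L_{\K(1)}}\Sym_{\KU_p}$ and $\Free_{\theta,p}$ carry the resulting (possibly infinite) coproducts to coproducts, and quote McClure only for a single even or odd cell, finishing the adjunction statement by a unit-chasing/Yoneda argument (which is correct and is a point the paper leaves implicit). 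What your route buys is a more self-contained and concrete height-$1$ picture; what it costs is exactly the bookkeeping you flag: the completed K\"unneth identification $\pi_*(A\hotimes_{\KU_p}B)\cong L_0(\pi_*A\otimes\pi_*B)$ for pro-free inputs, the interaction of $L_0$ and $\K(1)$-localisation with infinite wedges (where flatness is needed to kill $L_1$ and higher Tor terms --- essentially the content of the Tor spectral sequence in Appendix~\ref{app:K1_Tor_SS}), and the identification of the formal $\theta^p$ with McClure's extended-power operation rather than a mere look-alike. All of these are precisely what the cited Proposition~3.18 and Theorem~6.14 package, so in practice your argument would either cite the same results the paper does or reproduce their proofs; as a blind reconstruction it is correct in outline, with those steps left as acknowledged debts rather than errors.
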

\begin{proof}
This follows from the work by McClure \cite[Ch.~IX]{H_infty_book}, as explained by Barthel and Frankland \cite{barthel_frankland_power_op_MoravaE}.
Specifically, Barthel and Frankland first work with Morava E-theory $\mathrm{E}_h$ at a general height $h\geq 1$.
In §3, they introduce a monad $\hat{\mathbf{T}} = L_0 \bigoplus_{n\geq 0}\hat{\mathbf{T}}_n$.
In Proposition~3.18 they show that the natural map
\[
    \hat{\mathbf{T}}_n(\pi_* M) \to \mathop{\pi_*}(L_{\K(h)}\Sym^n_{\mathrm{E}_h}(M))
\]
is an isomorphism if $\pi_*M$ is flat as a graded module over $\pi_*(\mathrm{E}_h)$.
Then in Theorem~6.14, restricting to height $h=1$, they identify the monad $\hat{\mathbf{T}}$ with the free $\theta$-algebra monad.
\end{proof}

\subsection{K-theory Moore spectra}
\label{ssec:Ktheory_Moore_spectra}
Let $G$ be a $p$-adic Morava module, concentrated either in even or odd degree and which is finitely generated as $\Z_p$-module.
In \cite[Prop.~8.7]{bousf_K_local_spectra_odd_prime}, Bousfield constructs a $\KU$-local spectrum $M_{(p)}(G)$ along with an isomorphism
\[
    (\KU_{(p)})_*(M_{(p)}(G)) \cong G
\]
respecting stable $p$-local Adams operations $\psi^k$ for $k\in\Z_{(p)}^\times$.

\begin{definition}
\label{def:MG}
Let $G$ be a $p$-adic Morava module, concentrated either in even or odd degree and which is finitely generated as $\Z_p$-module.
We write $M(G)$ for the spectrum $M_{(p)}(G)_p^\wedge$.
\end{definition}

Note that $M(G)$ is a $\K(1)$-local spectrum.
As $G$ is derived $p$-complete, the following is immediate.

\begin{lemma}
\label{lem:K_Moore_spectrum}
The isomorphism $(\KU_{(p)})_*(M_{(p)}(G)) \cong G$ induces an isomorphism
\[
    \compKU(M(G)) \cong G
\]
of $p$-adic Morava modules.
\end{lemma}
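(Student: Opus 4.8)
The plan is to unwind the two $p$-completions hidden in the definitions of $\KU_p$ and of $M(G)$, and to reduce everything to Bousfield's computation of $(\KU_{(p)})_*(M_{(p)}(G))$. The first observation is that the maps $M_{(p)}(G) \to M(G) = M_{(p)}(G)^\wedge_p$ and $\KU_{(p)} \to \KU_p$ are $p$-completions, hence mod-$p$ equivalences, hence $\K(1)$-equivalences. Since $\KU_p \otimes (-)$ carries $\K(1)$-equivalences to $\K(1)$-equivalences, smashing these maps with the appropriate factor gives
\[
    \KU_p \hotimes M(G) \;\simeq\; \mathop{L_{\K(1)}}\br*{\KU_{(p)} \otimes M_{(p)}(G)},
\]
so it suffices to compute $\pi_*$ of the right-hand side while keeping track of the Adams operations.

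Next I would note that $\KU_{(p)} \otimes M_{(p)}(G)$ is a $\KU_{(p)}$-module spectrum, and that on $\KU_{(p)}$-module spectra $\K(1)$-localisation is computed by ordinary $p$-completion (this is standard: the $p$-completion of such a spectrum is $\K(1)$-local, since its reduction mod $p$ is a module over $\KU_{(p)}/p$, which generates the same Bousfield class as $\K(1)$, and the completion map is a mod-$p$, hence $\K(1)$-, equivalence). By Bousfield's isomorphism, $\pi_*(\KU_{(p)} \otimes M_{(p)}(G)) = (\KU_{(p)})_*(M_{(p)}(G)) \cong G$, which is finitely generated over $\Z_p$ and in particular has bounded $p$-torsion; therefore $L_0 G \cong G$ and $L_1 G = 0$, so the $p$-completion map is an isomorphism on homotopy groups. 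Stringing these identifications together yields
\[
    \compKU(M(G)) \;\cong\; \mathop{\pi_*}\br*{\br*{\KU_{(p)} \otimes M_{(p)}(G)}^\wedge_p} \;\cong\; L_0 G \;\cong\; G.
\]

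Finally I would upgrade this to an isomorphism of $p$-adic Morava modules. By Proposition~\ref{prop:Kthy_of_spectrum_is_Morava}, the group $\compKU(M(G))$ carries Adams operations $\psi^k$ for all $k \in \Z_p^\times$; tracing through the identifications above, their restriction to $k \in \Z_{(p)}^\times$ corresponds, under Bousfield's isomorphism, to the stable $p$-local Adams operations on $(\KU_{(p)})_*(M_{(p)}(G))$, and hence to the given $\Z_{(p)}^\times$-action on $G$. Since both actions are continuous, $\Z_{(p)}^\times$ is dense in $\Z_p^\times$, and $G$ is derived $p$-complete, the $\Z_p^\times$-action on $G$ is determined by its restriction to $\Z_{(p)}^\times$; hence the isomorphism is $\Z_p^\times$-equivariant, as required.

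The main obstacle here is not conceptual but organisational: one has to keep the two completions straight and transfer Bousfield's $\Z_{(p)}^\times$-equivariance into genuine $\Z_p^\times$-equivariance via continuity and density. The hypothesis that matters is the finite generation of $G$ over $\Z_p$ — equivalently, its derived $p$-completeness together with bounded $p$-torsion — which is exactly what ensures that no $L_1$-term appears when one passes through the outer $p$-completion, so that the homotopy groups are simply $G$ again.
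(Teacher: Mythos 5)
Your proof is correct and follows the same route the paper intends: the paper dismisses the lemma as immediate from the derived $p$-completeness (indeed finite generation) of $G$, and your argument simply spells out that immediacy — the completion maps are $\K(1)$-equivalences, $\K(1)$-localisation of a $\KU_{(p)}$-module is $p$-completion, and $L_0G\cong G$, $L_1G=0$ kill any correction terms, with the Adams operations matched via continuity and density of $\Z_{(p)}^\times$ in $\Z_p^\times$.
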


\begin{remark}
\label{rmk:K_Moore_spectrum_uniqueness}
Up to equivalence, the spectrum $M(G)$ is the unique $\K(1)$-local spectrum with the property of Lemma~\ref{lem:K_Moore_spectrum}.
This follows, e.g., from Proposition~\ref{prop:barthel-heard} below, together with the fact that the functor $\KU_p \hotimes {-}\colon \Sp_{\K(1)} \to \mathop{L_{\K(1)}}\Mod_{\KU_p}$ is conservative.
\end{remark}

\begin{definition}
If $G$ is a Morava module as in Definition~\ref{def:MG}, then we write $M^\vee(G)$ for the $\K(1)$-local Spanier--Whitehead dual of $M(G)$.
\end{definition}

For a Morava module $G$ as above, the spectrum $M(G)$ is $\K(1)$-locally dualisable, since $\compKU(M(G))$ is a finitely generated $\Z_p$-module; see \cite[Thm.~8.6]{hovey_strickland_Ktheories_localisation}.
We in particular obtain an isomorphism
\[
    \KU_p^*(M^\vee(G)) \cong G.
\]

\begin{remark}
If $G$ is concentrated in degree $d$, then Bousfield \cite{Bous_v1_Hspaces} uses the notation $\mathscr{M}(G,d)$ for $M^\vee(G)$.
\end{remark}

\section{Construction of the presentation}
\label{sec:construction_presentation}

In this section we construct the presentation described in Theorem~\ref{thm:cochain_presentation}.
We begin by specifying the spaces this theorem pertains to.

\begin{definition}
\label{def:exterior_theta_alg}
Let $G$ be a ($p$-adic) Morava module concentrated in odd degree, and let $\theta^p_G \colon G \to G$ be a morphism of Morava modules.
The \defi{exterior $\theta$-algebra} on $(G,\theta^p_G)$ is the $\Z_p$-algebra $\Lambda_{\Z_p}[G]$, with Adams operations and the map $\theta^p$ induced from the Adams operations on $G$ and from $\theta^p_G$, respectively.
We shall often write $\Lambda[G]$ for $\Lambda_{\Z_p}[G]$.
\end{definition}

\begin{center}
    \textit{Throughout this section, we follow Notation~{\upshape\ref{not:ext_algebra}}.}
\end{center}

\begin{notation}
\label{not:ext_algebra}
Let $X$ be a pointed space, and let $(G,\theta^p_G)$ be a pair of a $p$-adic Morava module with an endomorphism, such that
\begin{condenum}
    \item $G$ is finitely generated and free as a $\Z_p$-module, and concentrated in odd degree;
    \item $\KU_p^*(X)$ is an exterior $\theta$-algebra on $(G,\theta^p_G)$, or more precisely, we fix an isomorphism
    \[
        \KU_p^*(X) \cong \Lambda[G]
    \]
    of $p$-adic $\theta$-algebras with Adams operations.
\end{condenum}
Briefly put, the K-theory of $X$ is an exterior algebra on a finite number of odd generators, where the generators have specified Adams operations and $\theta^p$-action.
As previously noted in the Introduction, many H-spaces provide examples: see \cite[Thm.~6.3]{Bous_v1_Hspaces}.
\end{notation}

\begin{definition}
\label{def:theta_minus_Ftheta}
Define a map $\theta^p - F\theta^p_G \colon FG \to FG$ by
\[
    (x_0,x_1,x_2,\dotsc) \mapsto (0,x_0,x_1,\dotsc) - (\theta^p_G(x_0),\ \theta^p_G(x_1),\ \theta^p_G(x_2),\ \dotsc).
\]
Here $\theta^p$ denotes the formal $\theta$-operation in the target (i.e., shifting), and $F\theta^p_G$ denotes the component-wise action of $\theta^p_G$ on $FG = G\oplus G \oplus \dotsb$.
By applying the functor $\mathop{L_0}\Lambda[-]$, we get a map $\Free_\theta[G] \to \Free_\theta[G]$; this map we will also denote by $\theta^p - F\theta^p_G$.
\end{definition}

We also have a map $\Free_\theta[G] \to \Lambda[G]$ induced by the inclusion of the generators $G \to \Lambda[G]$.

\begin{lemma}
\label{lem:pushout_LG_free_theta_alg}
The square
\[
    \begin{tikzcd}
        \Free_\theta[G] \arrow[r,"\theta^p - F\theta^p_G"] \arrow[d] &[1.5em] \Free_\theta[G] \arrow[d]\\
        \Z_p\arrow[r] & \Lambda[G]
    \end{tikzcd}
\]
is a pushout square in $\CAlgh_{\Z_p}^*$.
\end{lemma}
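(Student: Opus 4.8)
The plan is to verify the universal property of the pushout directly on mapping sets in $\CAlgh_{\Z_p}^*$. Since $\Z_p$ is initial in $\CAlgh_{\Z_p}^*$, and both $\Z_p$ and $\Lambda[G]$ lie in $\CAlgh_{\Z_p}^*$, for every $B\in\CAlgh_{\Z_p}^*$ the claim reduces to showing that the canonical map
\[
    \Hom_{\CAlgh_{\Z_p}^*}(\Lambda[G],\, B) \longrightarrow \bigl\{\, f\colon\Free_\theta[G]\to B \ \bigm|\ f\circ(\theta^p - F\theta^p_G) = \eta\epsilon \,\bigr\}
\]
is a bijection, where $\eta\epsilon$ denotes the composite $\Free_\theta[G]\xrightarrow{\mathrm{aug}}\Z_p\to B$. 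First I would check that the square commutes, so that this map is even defined: precomposing $\Free_\theta[G]\xrightarrow{\theta^p - F\theta^p_G}\Free_\theta[G]\to\Lambda[G]$ with $\Lambda_{\Z_p}[FG]\to\Free_\theta[G]$ gives a $\Z_p$-algebra map which, by the description of $\Free_\theta[G]\to\Lambda[G]$ from Example~\ref{ex:free_theta_alg_odd_generators} and Definition~\ref{def:exterior_theta_alg}, sends a class $x$ in the $t$-th summand of $FG$ to $(\theta^p_G)^{t+1}(x) - (\theta^p_G)^{t}\bigl(\theta^p_G(x)\bigr) = 0$; hence it kills $FG$, so factors through $\Z_p$, and since $\Lambda[G]$ is derived $p$-complete the map $\Free_\theta[G]\to\Lambda[G]$ itself factors through $\Z_p$.

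Next I would unwind both mapping sets. Using that $L_0$ exhibits $\CAlgh_{\Z_p}^*$ as a reflective subcategory of $\CAlg_{\Z_p}^*$ — so that maps out of $\Free_\theta[G] = L_0\Lambda_{\Z_p}[FG]$ into the derived $p$-complete algebra $B$ are the same as maps out of $\Lambda_{\Z_p}[FG]$ — together with the fact that $\Lambda_{\Z_p}[V]$ is the free graded-commutative $\Z_p$-algebra on an odd module $V$ (recall $p$ is odd) and that $FG = \bigoplus_{t\ge 0}G$, one obtains
\[
    \Hom_{\CAlgh_{\Z_p}^*}(\Free_\theta[G],\, B) \;\cong\; \Hom_{\Mod_{\Z_p}^*}(FG,\, B) \;=\; \textstyle\prod_{t\ge 0}\Hom_{\Mod_{\Z_p}^*}(G,\, B),
\]
a map being recorded as the sequence $(f_t)_{t\ge0}$ of its restrictions to the summands; similarly $\Hom_{\CAlgh_{\Z_p}^*}(\Lambda[G],\, B)\cong\Hom_{\Mod_{\Z_p}^*}(G,\, B)$, and the canonical map above is $g\mapsto(g\circ(\theta^p_G)^t)_{t\ge0}$. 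It then remains to identify the target of that map: the condition $f\circ(\theta^p - F\theta^p_G) = \eta\epsilon$ says precisely that $f$ vanishes on the image of the augmentation ideal under $\theta^p - F\theta^p_G$, and — since $\theta^p - F\theta^p_G$ is an algebra map, $FG$ generates the augmentation ideal, and $B$ is Hausdorff — this is equivalent to $f$ vanishing on $(\theta^p - F\theta^p_G)(FG)$, i.e.\ (unwinding Definition~\ref{def:theta_minus_Ftheta}) to $f_{t+1} = f_t\circ\theta^p_G$ for all $t\ge0$. Thus the admissible sequences are exactly those of the form $(f_0\circ(\theta^p_G)^t)_{t\ge0}$, and $f_0\mapsto(f_0\circ(\theta^p_G)^t)_{t\ge0}$ inverts the canonical map, which finishes the proof.

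The only genuinely delicate point is the bookkeeping with the derived $p$-completion $L_0$: one has to know that $\CAlgh_{\Z_p}^*$ is reflective in $\CAlg_{\Z_p}^*$ with reflector $L_0$ (so that mapping out of $\Free_\theta[G]$ into a derived $p$-complete algebra reduces to mapping out of the uncompleted $\Lambda_{\Z_p}[FG]$), and that one may pass between the augmentation ideal of $\Lambda_{\Z_p}[FG]$ and that of $\Free_\theta[G]$ by continuity. Both are formal consequences of the properties of $L$-completion — in particular that $L_0$ is a symmetric monoidal localization — recalled in the references of §\ref{ssec:morava_modules}. Everything else is the finite, essentially combinatorial, relation-chase carried out above.
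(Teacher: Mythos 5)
Correct, and essentially the paper's own argument unwound by hand: the paper applies the colimit-preserving free derived $p$-complete algebra functor to the module-level pushout $FG \xrightarrow{\theta^p - F\theta^p_G} FG \to G$, while you verify the resulting universal property directly using the very same free/forgetful adjunction (maps out of $L_0\Lambda[FG]$ into a derived $p$-complete algebra are module maps out of $FG$) together with the same cokernel computation $f_{t+1}=f_t\circ\theta^p_G$. One small caveat: drop the appeal to $B$ being Hausdorff — derived $p$-complete modules need not be $p$-adically separated — but this costs nothing, since the reflective-subcategory identification you already invoke reduces equality of algebra maps out of $\Free_\theta[G]$ to equality on $FG$.
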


In other words, this is a presentation (or `cokernel sequence')
\[
    \begin{tikzcd}
        \Free_\theta[G] \arrow[r,"\theta^p -F\theta^p_G"] &[1.5em] \Free_\theta[G] \arrow[r] & \Lambda[G]
    \end{tikzcd}
\]
of augmented derived $p$-complete algebras.

\begin{proof}
We have a pushout square
\begin{equation}
    \label{eq:pushout_G_FG}
    \begin{tikzcd}
        FG \arrow[r,"\theta^p - F\theta^p_G"] \arrow[d] &[1.5em] FG \arrow[d]\\
        0\arrow[r] & G \arrow[ul, phantom, "\ulcorner", very near start]
    \end{tikzcd}
\end{equation}
in $\Mod_{\Z_p}^*$.
The free derived $p$-complete algebra functor $\Mod_{\Z_p}^* \to \CAlgh^*_{\Z_p}$ is a left adjoint and therefore preserves colimits.
The three nonzero terms in the square~\eqref{eq:pushout_G_FG} are concentrated in odd degree, so on these objects this functor is given by $\mathop{L_0} \Lambda[-]$.
Applying this functor to the square~\eqref{eq:pushout_G_FG} yields the claimed square in $\CAlgh_{\Z_p}^*$, which is therefore a pushout in $\CAlgh_{\Z_p}^*$.
\end{proof}

The advertised presentation of $\KU_p^{X_+}$
\[
    \begin{tikzcd}
        \KU_p\hotimes \Sym(M(G)) \arrow[r,"R"] & \KU_p\hotimes \Sym(M(G)) \arrow[r,"T"] & \KU_p^{X_+}
    \end{tikzcd}
\]
will be a topological analogue of the above algebraic presentation, with $R$ playing the role of the map $\theta^p - F\theta^p_G$, and $T$ playing the role of the map $\Free_\theta[G] \to \Lambda[G]$.
To realise this, we need some results.

\begin{proposition}[\cite{barthel_heard_E2_localAdams}, Prop.~1.14]
\label{prop:barthel-heard}
Let $Y$ and $Z$ be spectra.
Suppose that $\compKU(Y)$ is \emph{pro-free}, i.e., it is of the form $L_0F$ with $F$ a free $(\KU_p)_*$-module.
Then the natural map
\[
    \mathop{\pi_*} \map\br*{\, Y,\ \KU_p\hotimes Z\, } \to \Hom_{(\KU_p)_*}\br*{\compKU(Y),\ \compKU(Z)}
\]
is an isomorphism.
\end{proposition}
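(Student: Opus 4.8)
The plan is to reduce the statement to a mapping-space computation in the $\infty$-category $L_{\K(1)}\Mod_{\KU_p}$ of $\K(1)$-local $\KU_p$-modules, where the pro-free hypothesis makes $\KU_p\hotimes Y$ completely explicit.

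First I would pass through the free--forgetful adjunction $\Sp \rightleftarrows \Mod_{\KU_p}$ and then $\K(1)$-localize: since $\KU_p\hotimes Z$ is $\K(1)$-local,
\[
    \map\br*{\, Y,\ \KU_p\hotimes Z\,} \simeq \map_{\Mod_{\KU_p}}\br*{\KU_p\otimes Y,\ \KU_p\hotimes Z} \simeq \map_{L_{\K(1)}\Mod_{\KU_p}}\br*{\KU_p\hotimes Y,\ \KU_p\hotimes Z},
\]
the second equivalence holding because $L_{\K(1)}$ is left adjoint to the inclusion $L_{\K(1)}\Mod_{\KU_p}\hookrightarrow\Mod_{\KU_p}$. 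Here $\pi_*(\KU_p\hotimes Z)=\compKU(Z)$ is $L$-complete by Proposition~\ref{prop:Kthy_of_spectrum_is_Morava}, and $\pi_*(\KU_p\hotimes Y)=\compKU(Y)$ is pro-free by hypothesis.

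The key step is to show that any $\K(1)$-local $\KU_p$-module $A$ with $\pi_*A\cong L_0 F$ for a free $\Z/2$-graded $(\KU_p)_*$-module $F$ is equivalent to $L_{\K(1)}$ of a free $\KU_p$-module, i.e.\ to $L_{\K(1)}\br*{\bigoplus_\alpha \Sigma^{\epsilon_\alpha}\KU_p}$ for some parities $\epsilon_\alpha\in\{0,1\}$. To see this, choose a homogeneous basis $\{f_\alpha\}$ of $F$, lift each $f_\alpha$ along $F\to L_0 F=\pi_*A$ to a map $\Sigma^{\epsilon_\alpha}\KU_p\to A$, and assemble these into a map $P:=\bigoplus_\alpha\Sigma^{\epsilon_\alpha}\KU_p\to A$; the induced map $L_{\K(1)}P\to A$ is then a map of $\K(1)$-local $\KU_p$-modules. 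I would check it is an equivalence by smashing with $\KU_p/p$: using that $\K(1)$-localization of $\KU_p$-modules agrees with derived $p$-completion (so that such modules are detected on mod-$p$ homotopy), and that $P/p$ is $p$-torsion and hence already derived $p$-complete, both sides have homotopy $F/p$ and the map is an isomorphism there. Granting this, the mapping space above becomes $\map_{\Mod_{\KU_p}}\br*{P,\ \KU_p\hotimes Z}\simeq\prod_\alpha\Sigma^{-\epsilon_\alpha}\br*{\KU_p\hotimes Z}$, whose homotopy groups are $\prod_\alpha\compKU(Z)$ up to the grading shifts by the $\epsilon_\alpha$.

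On the algebraic side, $\Hom_{(\KU_p)_*}\br*{\compKU(Y),\ \compKU(Z)}=\Hom_{(\KU_p)_*}\br*{L_0F,\ \compKU(Z)}$, and since $\compKU(Z)$ is $L$-complete while $L_0F$ is the free $L$-complete module on $\{f_\alpha\}$, this is again $\prod_\alpha\compKU(Z)$ with the same shifts --- a standard property of pro-free modules (Hovey--Strickland \cite[App.~A]{hovey_strickland_Ktheories_localisation}, Barthel--Frankland \cite[App.~A]{barthel_frankland_power_op_MoravaE}). It then remains to verify that the natural comparison map of the proposition corresponds to the identity under these two identifications; this is a diagram chase tracing how the map $\pi_*\map(Y,-)\to\Hom_{(\KU_p)_*}(\compKU Y,\compKU(-))$ is built --- smash with $\KU_p$ and use the multiplication $\KU_p\hotimes\KU_p\to\KU_p$ --- together with naturality in $Z$, and presents no real difficulty once the previous steps are in hand. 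I expect the main obstacle to be the key step itself: realizing a pro-free $\K(1)$-local $\KU_p$-module as the $\K(1)$-localization of a free module, and checking that $L_{\K(1)}P\to A$ is an equivalence.
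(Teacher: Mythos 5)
Your argument is correct, but note that the paper never proves this statement at all: it is quoted verbatim from Barthel--Heard (Prop.~1.14) and used as a black box, so there is no internal proof to compare against. What you have written is essentially the standard proof of that cited result. The reduction $\map(Y,\KU_p\hotimes Z)\simeq\map_{\Mod_{\KU_p}}(\KU_p\hotimes Y,\KU_p\hotimes Z)$ via the free--forgetful adjunction and $\K(1)$-locality of the target is fine, and your key step --- realizing a $\K(1)$-local $\KU_p$-module with pro-free homotopy as $L_{\K(1)}$ of a free $\KU_p$-module by lifting a basis and checking the map mod $p$ --- is the standard device (it appears in Hovey--Strickland §8 and in Barthel--Frankland App.~A in the guise of ``pro-free modules are retracts of completed free ones''). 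Two points you gloss over but which do go through: (a) detection mod $p$ works because the cofibre $C$ of $L_{\K(1)}P\to A$ is $\K(1)$-local and $C\otimes\S/p\simeq 0$ forces $\K(1)_*C=0$, hence $C\simeq 0$; here you also implicitly use that $\pi_*A\cong L_0F$ and $F$ are $p$-torsion-free so that both mod-$p$ homotopy groups are $F/p$ and the comparison map is the reduction of the canonical map $F\to L_0F$. (b) On the algebraic side, the identification $\Hom_{(\KU_p)_*}(L_0F,\compKU(Z))\cong\prod_\alpha\compKU(Z)$ uses that $L_0$ is left adjoint to the inclusion of $L$-complete modules and that $\compKU(Z)$ is $L$-complete, both of which are in the references you cite. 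With those in place the final compatibility check is indeed routine, so your proposal is a complete and correct substitute for the external citation.
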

Since $G$ is finitely generated and free, it is immediate that $\Lambda[G]$ is pro-free.
Using Example~\ref{ex:free_theta_alg_odd_generators}, we see that $\Free_\theta[G] = \mathop{L_0}\Lambda[FG]$ is also pro-free.

\begin{corollary}
\label{cor:Kcohom_from_Khom}
Let $Y$ be a spectrum such that $\compKU(Y)$ is pro-free.
Then we have an isomorphism
\[
    \KU_p^*(Y) \cong \Hom_{(\KU_p)_*}\br*{\compKU(Y),\ (\KU_p)_*}.
\]
\end{corollary}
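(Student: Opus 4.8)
The plan is to obtain this as the special case $Z = \S$ of Proposition~\ref{prop:barthel-heard}. Unwinding definitions, $\KU_p^*(Y) = \pi_{-*}\map(Y,\KU_p)$, and since $\KU_p$ is $2$-periodic the $\Z/2$-graded groups $\pi_{-*}$ and $\pi_*$ coincide, so it suffices to identify $\pi_*\map(Y,\KU_p)$ with $\Hom_{(\KU_p)_*}\br*{\compKU(Y),\ (\KU_p)_*}$.

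First I would use that $\KU_p$ is already $\K(1)$-local to get a canonical equivalence $\KU_p \simeq \KU_p \hotimes \S$, and hence $\map(Y,\KU_p) \simeq \map(Y,\ \KU_p\hotimes \S)$. Then Proposition~\ref{prop:barthel-heard}, applied to this $Y$ (whose completed $\KU_p$-homology is pro-free by hypothesis) and with $Z = \S$, says that the natural map
\[
    \pi_*\map\br*{Y,\ \KU_p\hotimes \S} \longrightarrow \Hom_{(\KU_p)_*}\br*{\compKU(Y),\ \compKU(\S)}
\]
is an isomorphism. Since $\compKU(\S) = \pi_*(\KU_p\hotimes \S) = \pi_*\KU_p = (\KU_p)_*$, the right-hand side is $\Hom_{(\KU_p)_*}\br*{\compKU(Y),\ (\KU_p)_*}$, and combining the three identifications gives the claim.

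There is essentially no obstacle here; the statement is a direct corollary of the previously cited result. The only minor points worth spelling out are the bookkeeping of the $\Z/2$-grading, so that the cohomological grading on $\KU_p^*(Y)$ matches the homological grading appearing inside the $\Hom$, and the observation that the isomorphism so produced is the evident evaluation (duality) map; both are immediate from the construction of the natural map in Proposition~\ref{prop:barthel-heard}.
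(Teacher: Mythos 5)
Your proof is correct and is exactly the paper's argument: the paper proves this corollary by taking $Z = \S$ in Proposition~\ref{prop:barthel-heard}, just as you do. The extra bookkeeping you spell out (identifying $\KU_p \simeq \KU_p \hotimes \S$ and $\compKU(\S) \cong (\KU_p)_*$, and matching gradings) is fine and implicit in the paper's one-line proof.
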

\begin{proof}
Take $Z = \S$ in Proposition~\ref{prop:barthel-heard}.
\end{proof}

\begin{lemma}
\label{lem:Ktheory_smash_product_and_tensor_product}
Let $Y$ and $Z$ be spectra.
Suppose that $\KU_p^*(Y)$ is finitely generated and free.
Then the natural map
\[
    \KU_p^*(Y) \otimes_{(\KU_p)^*} \KU_p^*(Z) \to \KU_p^*(Y\otimes Z)
\]
is an isomorphism.
\end{lemma}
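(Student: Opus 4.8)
The plan is to reduce the statement to the assertion that $\KU_p\hotimes Y$ is a dualisable object of $\mathop{L_{\K(1)}}\Mod_{\KU_p}$; once this is known, the external product map becomes an equivalence for formal reasons.

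First I would study the mapping spectrum $\KU_p^Y := \map(Y,\KU_p)$, a $\KU_p$-module with $\pi_{-*}\KU_p^Y \cong \KU_p^*(Y)$. Since $\pi_*\KU_p$ is concentrated in a single $\Z/2$-degree and $\KU_p^*(Y)$ is finitely generated and free, choosing a basis produces an equivalence $\KU_p^Y \simeq \bigvee_{i=1}^{n}\Sigma^{d_i}\KU_p$ of $\KU_p$-modules; in particular $\KU_p^Y$ is dualisable over $\KU_p$. The heart of the argument is to upgrade this to the dualisability of $\KU_p\hotimes Y$ itself, equivalently to the finite generation of $\compKU(Y)$ over $\Z_p$. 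For this I would reduce modulo $p$: as $\S/p$ is a finite spectrum, $\map(Y,\KU/p)\simeq \KU_p^Y/p$, whose homotopy $\KU_p^*(Y)/p$ is a finite-rank free module over the graded field $\pi_*\KU/p\cong\mathbf{F}_p[\beta^{\pm1}]$. Dualising this finite free $\KU/p$-module shows that $\KU/p\hotimes Y$ has finite-dimensional homotopy, and then the cofibre sequence $\KU_p\hotimes Y\to\KU_p\hotimes Y\to\KU/p\hotimes Y$ (multiplication by $p$ followed by reduction) shows that $\compKU(Y)/p$ is finite-dimensional over $\mathbf{F}_p$. Since $\compKU(Y)$ is derived $p$-complete, being the homotopy of a $\K(1)$-local $\KU_p$-module, a derived Nakayama argument (see \cite[App.~A]{hovey_strickland_Ktheories_localisation}) forces $\compKU(Y)$ to be finitely generated over $\Z_p$, whence $\KU_p\hotimes Y$ is dualisable over $\KU_p$ by \cite[Thm.~8.6]{hovey_strickland_Ktheories_localisation}. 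As $\KU_p^Y = \map_{\KU_p}(\KU_p\hotimes Y,\KU_p)$ is then its $\KU_p$-linear dual, dualising $\KU_p^Y\simeq\bigvee_i\Sigma^{d_i}\KU_p$ back gives $\KU_p\hotimes Y\simeq\bigvee_i\Sigma^{-d_i}\KU_p$, so in particular $\compKU(Y)$ is finitely generated and free.

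To conclude, I would use the exponential law and the free–forgetful adjunction to rewrite $\map(Y\otimes Z,\KU_p)\simeq\map(Y,\map(Z,\KU_p))\simeq\map_{\KU_p}(\KU_p\hotimes Y,\ \map(Z,\KU_p))$, where the last step uses that $\map(Z,\KU_p)$ is $\K(1)$-local (a mapping spectrum into a $\K(1)$-local spectrum is $\K(1)$-local). Since $\KU_p\hotimes Y$ is dualisable over $\KU_p$ with dual $\KU_p^Y$, the canonical map $\KU_p^Y\otimes_{\KU_p}\map(Z,\KU_p)\to\map_{\KU_p}(\KU_p\hotimes Y,\ \map(Z,\KU_p))$ is an equivalence, and under the identifications above this is precisely the natural external product map $\KU_p^Y\otimes_{\KU_p}\KU_p^Z\to\KU_p^{Y\otimes Z}$. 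Taking $\pi_{-*}$ and using once more that $\KU_p^*(Y)$ is finitely generated free — so that $\KU_p^Y$ is a finite wedge of suspensions of $\KU_p$ and hence the homotopy of the source is $\KU_p^*(Y)\otimes_{\Z_p}\KU_p^*(Z)$ with no $\mathrm{Tor}$ correction — yields the desired isomorphism $\KU_p^*(Y)\otimes_{(\KU_p)^*}\KU_p^*(Z)\ \simeqto\ \KU_p^*(Y\otimes Z)$.

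I expect the main obstacle to be the middle step: passing from a finiteness hypothesis on the K-\emph{cohomology} of $Y$ to a dualisability statement about its K-\emph{homology} $\KU_p\hotimes Y$. The reduction modulo $p$ to a graded field, combined with derived $p$-completeness, is what bridges this gap; alternatively one may simply invoke the standard fact that a $\K(1)$-local spectrum with finitely generated $\KU_p$-cohomology is $\K(1)$-locally dualisable.
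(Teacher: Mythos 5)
Your argument is correct, but it is a genuinely different route from the one in the paper. The paper's proof is a one-liner: it invokes the K\"unneth spectral sequence for $\KU_p^*(Y\otimes Z)$ (EKMM, Ch.~IV, Thm.~4.7), which collapses because $\KU_p^*(Y)$ is free, giving the isomorphism directly. You instead upgrade the hypothesis to a dualisability statement: splitting $\map(Y,\KU_p)$ as a finite wedge of shifts of $\KU_p$, reducing mod $p$ to the graded field $\pi_*(\KU_p/p)$ to see that $\compKU(Y)/p$ is finite, applying derived Nakayama to get $\compKU(Y)$ finitely generated, and then citing Hovey--Strickland Thm.~8.6 (which the paper itself uses elsewhere, for $M(G)$) to conclude that $\KU_p\hotimes Y$ is dualisable, after which the external product map is an equivalence of $\KU_p$-modules for formal reasons and the statement follows on homotopy. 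Each step checks out: the graded-field splitting plus the product computation of the dual does bound the number of wedge summands, the cofibre sequence argument does bound $\compKU(Y)/p$, and derived Nakayama applies since homotopy of a $\K(1)$-local $\KU_p$-module is derived $p$-complete. What your approach buys is strictly more information -- an equivalence of spectra $\KU_p\hotimes(Y\otimes Z)\simeq \KU_p^{Y,\vee}\otimes_{\KU_p}\map(Z,\KU_p)$ before passing to homotopy, and the intermediate fact that $\compKU(Y)$ is finitely generated and free -- at the cost of length and of a small routine check that the abstract duality equivalence really is the natural map in the statement; the paper's citation of the K\"unneth spectral sequence is shorter and uses the freeness hypothesis exactly where it is needed, namely to kill the higher Tor terms.
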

\begin{proof}
Because $\KU_p^*(Y)$ is free, the K\"unneth spectral sequence for $\KU_p^*(Y\otimes Z)$ (as in, e.g., \cite[Ch.~IV, Thm.~4.7]{EKMM}) collapses, yielding the claim.
\end{proof}

\begin{construction}
\label{constr:R_and_T}
We now construct two maps 
\begin{align*}
    T \colon &\KU_p\hotimes \Sym(M(G)) \to \KU_p^{X_+},\\
    R \colon &\KU_p\hotimes \Sym(M(G)) \to \KU_p\hotimes \Sym(M(G))
\end{align*}
of $\K(1)$-local augmented $\KU_p$-algebras as follows.
\begin{letterenum}
    \item Because $\KU_p^{X_+}$ is $\K(1)$-local, the map $T$ corresponds to a map of spectra
    \[
        M(G) \to \KU_p^{X_+}.
    \]
    By adjunction, this is the same as a map of spectra $M(G) \otimes \Sigma^\infty_+ X \to \KU_p$, i.e., a cohomology class in $\KU_p^*(M(G)\otimes \Sigma^\infty_+X)$.
    The module $\KU_p^*(M(G))\cong G$ is free, so Lemma~\ref{lem:Ktheory_smash_product_and_tensor_product} applies, yielding an isomorphism
    \[
        \KU_p^*(M(G)\otimes\Sigma^\infty_+ X) \cong \KU_p^*(M(G)) \otimes_{(\KU_p)^*} \KU_p^*(X).
    \]
    Using Corollary~\ref{cor:Kcohom_from_Khom} and the fact that $G$ is finitely generated and free, the right-hand side is isomorphic to
    \[
        \Hom_{\Z_p}(G,\Z_p)\otimes_{\Z_p} \Lambda[G] \cong \Hom_{\Z_p}(G, \Lambda[G]).
    \]
    In conclusion, we have to specify a homomorphism $G \to \Lambda[G]$.
    This we take to be the inclusion of the generators.
    \item The map $R$ corresponds to a map of $\K(1)$-local $\KU_p$-modules
    \[
        \KU_p\hotimes M(G) \to \KU_p\hotimes \Sym(M(G)).
    \]
    By Proposition~\ref{prop:barthel-heard}, such a map is determined up to equivalence by the homomorphism it induces on homotopy groups.
    By Theorem~\ref{thm:mcclure} and Lemma~\ref{lem:K_Moore_spectrum}, the K-homology of $\Sym(M(G))$ is
    \[
        \Free_\theta\sqbr*{\compKU(M(G))} \cong \Free_\theta[G].
    \]
    In conclusion, we have to specify a homomorphism $G \to \Free_\theta[G]$.
    This we take to be the map $\theta^p - F\theta_G^p$ of Definition~\ref{def:theta_minus_Ftheta} (restricted to $G\subseteq \Free_\theta[G]$).
\end{letterenum}
\end{construction}

In a general pointed $\infty$-category, recall that a map is called \emph{nullhomotopic} if it factors over a zero object.
In augmented $\KU_p$-algebras, the algebra $\KU_p$ is a zero object.
\begin{lemma}
\label{lem:composite_null_presentation_Kalg}
The composite $T\circ R$ is a nullhomotopic map of augmented $\KU_p$-algebra spectra.
\end{lemma}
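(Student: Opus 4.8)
The plan is to reduce the statement to the algebraic fact proved in Lemma~\ref{lem:pushout_LG_free_theta_alg} via the identification of $\K(1)$-local mapping spaces of symmetric algebras provided by Proposition~\ref{prop:barthel-heard} and Theorem~\ref{thm:mcclure}. Concretely, $T\circ R$ is a map in $\mathop{L_{\K(1)}}\CAlg^\aug_{\KU_p}$ out of the free object $\KU_p\hotimes\Sym(M(G)) \simeq \mathop{L_{\K(1)}}\Sym_{\KU_p}(\KU_p\hotimes M(G))$. By the universal property of the symmetric algebra, such a map is classified by the underlying $\KU_p$-module map $\KU_p\hotimes M(G) \to \KU_p^{X_+}$, and since $\KU_p^{X_+}$ is $\K(1)$-local and $\compKU(M(G))\cong G$ is pro-free (being finitely generated free), Proposition~\ref{prop:barthel-heard} says this module map is in turn detected on homotopy groups, i.e.\ by a $\Z_p$-linear map $G \to \pi_*\KU_p^{X_+} \cong \Lambda[G]$. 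To show $T\circ R$ is nullhomotopic it therefore suffices to show that the composite homomorphism $G \to \Lambda[G]$ is zero, since the zero map $G\to\Lambda[G]$ classifies the map factoring through the zero algebra $\KU_p$.

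The next step is to compute this composite on homotopy groups. By Theorem~\ref{thm:mcclure}, $\compKU(\Sym(M(G))) \cong \Free_\theta[G]$, and the map $R$ was defined so that, under the identification of $\pi_*(\KU_p\hotimes\Sym(M(G)))$ with $\Free_\theta[G]$ afforded by that theorem, it realises on homotopy groups the algebra endomorphism $\theta^p - F\theta^p_G$ of $\Free_\theta[G]$ from Definition~\ref{def:theta_minus_Ftheta}. Similarly, $T$ was defined so that it realises on homotopy groups the augmentation-algebra map $\Free_\theta[G]\to\Lambda[G]$ induced by the inclusion $G\hookrightarrow\Lambda[G]$ of generators (here one uses the compatibility square in Theorem~\ref{thm:mcclure} to see that the module map classifying $T$ on $\pi_*$ is indeed the restriction of this $\theta$-algebra map to $G$). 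Hence $\pi_*(T\circ R)$, restricted to $G\subseteq\Free_\theta[G]$, is the composite
\[
    G \hookrightarrow \Free_\theta[G] \xrightarrow{\ \theta^p - F\theta^p_G\ } \Free_\theta[G] \longrightarrow \Lambda[G].
\]
But Lemma~\ref{lem:pushout_LG_free_theta_alg} says precisely that $\Free_\theta[G]\xrightarrow{\theta^p-F\theta^p_G}\Free_\theta[G]\to\Lambda[G]$ is a cokernel sequence in $\CAlgh^*_{\Z_p}$; in particular the second map kills the image of the first, so the displayed composite is zero.

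The main obstacle — and the step requiring the most care — is bookkeeping the two layers of adjunction correctly: one must check that the homotopy-group computation above, which takes place among $\theta$-algebra (resp.\ $\Z_p$-module) maps, really does control $T\circ R$ as a map of $\K(1)$-local augmented $\KU_p$-\emph{algebras}, and not merely as a map of modules or of plain algebras. This is where the naturality and adjunction-compatibility clauses of Theorem~\ref{thm:mcclure} and Proposition~\ref{prop:barthel-heard} do the work: the former gives a commuting square relating $\Map_{\CAlg}(\Sym(-),-)$ with $\Hom_\theta(\Free_\theta[-],-)$, so that the classifying module map of the algebra map $T\circ R$ is the restriction to $G$ of the $\theta$-algebra map $\pi_*(T)\circ\pi_*(R)$, and the latter ensures this module map being nullhomotopic is equivalent to the corresponding $\Z_p$-linear map $G\to\Lambda[G]$ vanishing. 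Once this is set up, the vanishing is immediate from Lemma~\ref{lem:pushout_LG_free_theta_alg}, and a map of augmented algebras out of a free algebra whose classifying module map is null necessarily factors through the zero algebra $\KU_p$, giving the claim.
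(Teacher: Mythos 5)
Your argument is correct and is essentially the paper's own proof: you use the universal property of the $\K(1)$-local symmetric algebra together with McClure's theorem and the pro-freeness of $G$ to reduce the nullhomotopy to the vanishing of a single $\Z_p$-linear map $G\to\Lambda[G]$, identify that map with the composite $G\to\Free_\theta[G]\to\Lambda[G]$ (first map $\theta^p-F\theta^p_G$ restricted to $G$, second map induced by the inclusion of generators), and observe it is zero --- the paper checks this by the telescoping computation $x\mapsto-\theta^p_G(x)+\theta^p_G(x)=0$, while you deduce it from the commutative square of Lemma~\ref{lem:pushout_LG_free_theta_alg} together with the fact that $G$ lies in the augmentation ideal, which amounts to the same thing. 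The only point to tighten is your appeal to Proposition~\ref{prop:barthel-heard} with target $\KU_p^{X_+}$: as stated it requires a target of the form $\KU_p\hotimes Z$, so you should either note that $\KU_p^{X_+}\simeq\KU_p\hotimes Z$ with $Z$ the $\K(1)$-local Spanier--Whitehead dual of $\Sigma^\infty_+X$ (available because $\KU_p^*(X)$ is finitely generated), or detect the map as the paper does via Lemma~\ref{lem:Ktheory_smash_product_and_tensor_product} and Corollary~\ref{cor:Kcohom_from_Khom}.
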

\begin{proof}
By the same reasoning as above, the composite $T\circ R$ is classified by the corresponding map of spectra
\[
    M(G) \to \KU_p^{X_+},
\]
which is classified by the corresponding cohomology class in
\[
    \KU_p^*(M(G)) \otimes_{(\KU_p)^*} \KU_p^*(X) \cong \Hom_{\Z_p}(G, \Lambda[G]).
\]
Unwinding our constructions, we see that this map $G \to \Lambda[G]$ is given by
\[
    \begin{tikzcd}[row sep=tiny]
        G \arrow[r,"\theta^p-F\theta^p_G"] &[1.5em] \Free_\theta[G] \arrow[r] & \Lambda[G],\\
        x \arrow[r,mapsto] & (-\theta_G^p (x),\ x,\ 0,\ 0,\ \dotsc) \arrow[r,mapsto] & - \theta^p_G(x) + \theta^p_G(x) = 0.
    \end{tikzcd}
\]
As this is the zero map, we are done.
\end{proof}

Henceforth we fix a choice of nullhomotopy, which is a witness to the commutativity of the diagram
\begin{equation}
    \label{eq:triangle_for_presentation_KUX}
    \begin{tikzcd}
        \KU_p\hotimes \Sym(M(G)) \arrow[r, "R"] \arrow[d] &[1.75em] \KU_p\hotimes \Sym(M(G)) \arrow[d,"T"]\\
        \KU_p \arrow[r] & \KU_p^{X_+}
    \end{tikzcd}
\end{equation}
in the $\infty$-category $\mathop{L_{\K(1)}} \CAlg^\aug_{\KU_p}$ of augmented $\K(1)$-local $\KU_p$-algebras.

We will prove Theorem~\ref{thm:cochain_presentation} by showing that Diagram~\eqref{eq:triangle_for_presentation_KUX} is a pushout square.
To show this, we will use the $\K(1)$-local Tor spectral sequence constructed in Appendix~\ref{app:K1_Tor_SS}.
Its $E^2$-page consists of the \emph{completed Tor groups} from §\ref{ssec:completed_Tor_groups}:
\[
    E^2_{s,t} = \Torh^{\pi_*A}_{s,t}(\pi_*M,\, \pi_*N),
\]
and in suitable cases it converges to $\pi_{s+t}(M\hotimes_A N)$; see Theorem~\ref{prop:K1_local_Tor_SS}.
We first compute the $E^2$-page of this spectral sequence.

Recall the map $\theta^p - F\theta^p_G \colon FG \to FG$ from Definition~\ref{def:theta_minus_Ftheta}.

\begin{lemma}
\label{lem:theta_minus_Ftheta_is_flat}
The map $\Lambda[\theta^p - F\theta^p_G] \colon \Lambda[FG] \to \Lambda[FG]$ is a flat map of algebras.
\end{lemma}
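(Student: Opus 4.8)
The plan is to show that the target $\Lambda[FG]$ is not merely flat but in fact \emph{free} as a module over the source $\Lambda[FG]$ along the map $\Lambda[\theta^p - F\theta^p_G]$; flatness then follows formally. The key is to split off a complement of the image of $\theta^p - F\theta^p_G$ at the level of $\Z_p$-modules, which is possible precisely because $G$ is free.

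First I would check directly that the linear map $d := \theta^p - F\theta^p_G \colon FG \to FG$ is injective. If $x = (x_0,\dotsc,x_T,0,0,\dotsc)$ has support of length $T+1$ and $d(x) = 0$, then the $(T+1)$-st coordinate of $d(x) = (0,x_0,x_1,\dotsc) - (\theta^p_G x_0,\theta^p_G x_1,\dotsc)$ equals $x_T$, forcing $x_T = 0$; an induction on the length of the support then gives $x = 0$. Combining this with the pushout square~\eqref{eq:pushout_G_FG}, which identifies the cokernel of $d$ with $G$, we obtain a short exact sequence $0 \to FG \xrightarrow{d} FG \to G \to 0$ of $\Z/2$-graded $\Z_p$-modules, all concentrated in odd degree. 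Since $G$ is free as a $\Z_p$-module by Notation~\ref{not:ext_algebra}(i), this sequence splits, and we may choose the splitting homogeneously; thus there is a decomposition $FG \cong d(FG) \oplus G'$ with $G' \cong G$, and $d$ restricts to an isomorphism $FG \xrightarrow{\sim} d(FG)$.

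Now I would apply the exterior algebra functor, which carries direct sums of modules to tensor products of algebras, obtaining
\[
    \Lambda[FG] \cong \Lambda[d(FG)] \otimes_{\Z_p} \Lambda[G'],
\]
under which $\Lambda[d] \colon \Lambda[FG] \to \Lambda[FG]$ becomes the composite of the isomorphism $\Lambda[FG] \xrightarrow{\sim} \Lambda[d(FG)]$ induced by $d$ with the inclusion $\Lambda[d(FG)] \hookrightarrow \Lambda[d(FG)] \otimes_{\Z_p} \Lambda[G']$ of the first tensor factor. Hence, viewed as a module over the source $\Lambda[FG]$ via $\Lambda[\theta^p - F\theta^p_G]$, the target is isomorphic to $\Lambda[FG] \otimes_{\Z_p} \Lambda[G']$, i.e., it is free on any homogeneous $\Z_p$-basis of $\Lambda[G']$ (a finite free $\Z_p$-module of rank $2^n$, where $n$ is the rank of $G$). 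A free module is flat, which proves the lemma.

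The argument is essentially formal once injectivity of $d$ is established; the only place where a hypothesis is genuinely used is the splitting of the short exact sequence, which rests on the freeness of $G$ from Notation~\ref{not:ext_algebra}. I do not anticipate a real obstacle here — in particular the $\Z/2$-grading causes no difficulty, since the splitting and the resulting basis can be chosen homogeneous, so the conclusion holds in the graded setting as well.
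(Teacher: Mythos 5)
Your argument is correct and is essentially the paper's proof in coordinate-free form: the splitting $FG \cong d(FG) \oplus G'$ that you extract from injectivity of $d$ and freeness of the cokernel $G$ is exactly the paper's observation that the images $\alpha_{it}$ of the exterior generators, together with the original generators $x_j$ of $G$, again form a system of exterior generators, and both arguments conclude that the target is free (hence flat) over the source. Your explicit injectivity check and the identification of a free basis of rank $2^n$ coming from $\Lambda[G']$ merely spell out what the paper leaves as a reader's verification, so the route is the same.
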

\begin{proof}
In this proof, we will write $\theta$ for the operation $\theta^p$ on the free $\theta$-algebra, and write $\theta^t$ for the $t$-th iterate of $\theta$, with $\theta^0$ denoting the identity.
Let $x_1,\dotsc,x_n$ be generators for $G$.
We can write
\[
    \Lambda[FG] \cong \Lambda_{\Z_p} [\,x_i,\ \theta x_i,\ \theta^2 x_i,\ \theta^3 x_i, \ \dotsc\,]_{i=1,\dotsc,n}.
\]
For $i=1,\dotsc,n$, write $\theta^p_G(x_i) = \sum_{j=1}^n \lambda_{ij} x_j$ for $\lambda_{ij} \in \Z_p$.
In these coordinates, the map under inspection takes the form
\begin{align*}
    \Lambda_{\Z_p} [\,x_i,\ \theta x_i,\ \theta^2 x_i,\ \dotsc\,]_i &\to \Lambda_{\Z_p} [\,x_i,\ \theta x_i,\ \theta^2 x_i,\ \dotsc\,]_i,\\
    \theta^t x_i &\mapsto \theta^{t+1} x_i - \sum_{j=1}^n \lambda_{ij} \cdot \theta^t x_j.
\end{align*}
Denote the image of $\theta^t x_i$ by $\alpha_{it}$.
The elements
\[
    \set{\alpha_{it} \mid i=1,\dotsc,n, \text{ and } t \geq 0} \cup \set{ x_j \mid j=1,\dotsc,n}
\]
also serve as exterior algebra generators of $\Lambda[FG]$.
Indeed, the reader can check that the $\alpha_{it}$ anticommute with each other and with the $x_j$, and that the original generators $\set{\theta^t x_i}_{i,t}$ can be expressed as a linear combination of the $\alpha_{it}$ and the $x_j$.
Thus, up to isomorphism, the map $\Lambda[\theta^p - F\theta^p_G] \colon \Lambda[FG] \to \Lambda[FG]$ is a shifting of generators, and is thus a flat map.
\end{proof}

Applying $L_0$ to the map of Lemma~\ref{lem:theta_minus_Ftheta_is_flat} yields a map $\Free_\theta[G] \to \Free_\theta[G]$ of $\theta$-algebras, letting us view the target as a module over the source.
We now compute the following completed Tor groups.

\begin{corollary}
\label{cor:Tor_for_free_theta_vanish}
The completed Tor groups
\[
    \Torh_i^{\Free_\theta[G]}(\Free_\theta[G],\,\Z_p)
\]
vanish for $i > 0$.
\end{corollary}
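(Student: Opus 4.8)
The plan is to strengthen the conclusion of Lemma~\ref{lem:theta_minus_Ftheta_is_flat}: I would show that the map $\Free_\theta[G] \to \Free_\theta[G]$ in question exhibits the target as a \emph{finitely generated free} module over the source, and then observe that such modules are acyclic for the completed Tor.

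First I would return to the proof of Lemma~\ref{lem:theta_minus_Ftheta_is_flat}. There, writing $x_1,\dotsc,x_n$ for generators of $G$, we produced exterior-algebra generators $\set{\alpha_{it}}_{i,t} \cup \set{x_j}_j$ for the target $\Lambda[FG]$ such that the map $\Lambda[\theta^p - F\theta^p_G]$ carries the generator $\theta^t x_i$ of the source to $\alpha_{it}$. In particular this map identifies the source $\Lambda[FG]$ with the sub-exterior-algebra $\Lambda_{\Z_p}[\alpha_{it}]$, and along this identification the target becomes
\[
    \Lambda[FG] \cong \Lambda_{\Z_p}[\alpha_{it}] \otimes_{\Z_p} \Lambda_{\Z_p}[x_1,\dotsc,x_n],
\]
which is free of rank $2^n$ as a module over the source, a basis being given by the square-free monomials in $x_1,\dotsc,x_n$.

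Next I would apply $L_0$. Since $L_0$ is additive, it takes the finite free module above to $\Free_\theta[G] \otimes_{\Z_p} \Lambda_{\Z_p}[x_1,\dotsc,x_n]$; that is, the induced map $\Free_\theta[G] \to \Free_\theta[G]$ makes the target a free module of rank $2^n$ over the source. Finally, since a finitely generated free module over a derived $p$-complete ring is itself derived $p$-complete, it is pro-free and hence serves as its own pro-free resolution when computing the completed Tor of §\ref{ssec:completed_Tor_groups}; as $\Z_p$ is already derived $p$-complete this yields
\[
    \Torh_i^{\Free_\theta[G]}(\Free_\theta[G],\Z_p) \cong \begin{cases} \Z_p^{\oplus 2^n} & i = 0,\\ 0 & i > 0,\end{cases}
\]
which is the claim. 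The one point requiring care — and the reason we extract the finite-free statement rather than merely quoting flatness — is that $L_0$ is only right exact, so flatness of a map of $\Z_p$-algebras need not survive derived $p$-completion, whereas finite direct sums do.
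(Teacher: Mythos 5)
Your argument is correct, but it resolves the completion issue differently from the paper. The paper's proof is two lines: Lemma~\ref{lem:theta_minus_Ftheta_is_flat} gives flatness of $\Lambda[\theta^p - F\theta^p_G]$, hence vanishing of the \emph{ordinary} Tor groups $\Tor_i^{\Lambda[FG]}(\Lambda[FG],\Z_p)$ for $i>0$, and then the general transfer result Lemma~\ref{lem:complete_Tor_groups_vanishing} from the appendix is invoked to deduce vanishing of the completed Tor groups after applying $L_0$. You instead go back into the proof of Lemma~\ref{lem:theta_minus_Ftheta_is_flat} and extract the stronger structural fact that the change of exterior generators identifies the target with $\Lambda_{\Z_p}[\alpha_{it}]\otimes_{\Z_p}\Lambda_{\Z_p}[x_1,\dotsc,x_n]$, i.e.\ a free module of rank $2^n$ over the source; since $L_0$ is additive and compatible with the module structures (the target is the source tensored over $\Z_p$ with a finite free module), finite freeness survives $L_0$, and a finitely generated free module over the derived $p$-complete ring $\Free_\theta[G]$ is pro-free, hence its own resolution for the completed Tor of the appendix. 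This buys you independence from Lemma~\ref{lem:complete_Tor_groups_vanishing} (and it is a fair observation that mere flatness is not obviously preserved by the right-exact functor $L_0$, which is exactly the gap that lemma is there to bridge), at the cost of using the specific rank-$2^n$ structure rather than a formal flatness statement; as a bonus you also identify $\Torh_0 \cong \Z_p^{\oplus 2^n} \cong \Lambda[G]$, consistent with Lemma~\ref{lem:pushout_LG_free_theta_alg}. The only point to be explicit about is that your final step presupposes the appendix's definition of $\Torh$ via pro-free (or projective) resolutions in derived $p$-complete modules, under which free modules are indeed acyclic; with that standard convention your proof is complete.
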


\begin{proof}
By Lemma~\ref{lem:theta_minus_Ftheta_is_flat}, the Tor groups
\[
    \Tor_i^{\Lambda[FG]}(\Lambda[FG],\,\Z_p)
\]
vanish for $i > 0$.
Now apply \cref{prop:SES_complete_Tor}; note that $L_1(\Lambda[FG])=0$ as the ring is $p$-torsion free.
\end{proof}

\begin{remark}
    \label{rmk:why_use_decompletions}
    Note that in the above computation, it is crucial to work with the de-completion $\Lambda[FG]$ of $\Free_\theta[G]$: it is for these that we can compute the non-completed Tor groups, and \cref{prop:SES_complete_Tor} allows us to deduce the completed case from this.
    By contrast, working with the non-completed Tor spectral sequence would require a computation of the non-completed $\Tor^{\Free_\theta[G]}(\Free_\theta[G],\,\Z_p)$, which we do not know how to do.
\end{remark}

\begin{proof}[Proof of Theorem~{\upshape\ref{thm:cochain_presentation}}]
We show that Diagram~\eqref{eq:triangle_for_presentation_KUX} is a pushout square in the $\infty$-category $\mathop{L_{\K(1)}} \CAlg^\aug_{\KU_p}$.
By \cite[Prop.~1.2.13.8]{HTT}, the pushout of Diagram~\eqref{eq:triangle_for_presentation_KUX} in $\mathop{L_{\K(1)}}\CAlg^\aug_{\KU_p}$ is the same as the pushout in $\mathop{L_{\K(1)}}\CAlg_{\KU_p}$; let $A$ denote this pushout.
The diagram induces a map $A \to \smash{\KU_p^{X_+}}$; we prove this is an equivalence.
It suffices to show that it induces an isomorphism on homotopy groups.
The algebra $A$ is the relative smash product
\[
    A = \Big(\KU_p \hotimes \Sym(M(G)) \Big) \hotimes_{ \KU_p\hotimes \Sym(M(G))} \KU_p.
\]
The homotopy groups of this relative smash product are computed by the $\K(1)$-local Tor spectral sequence of Theorem~\ref{prop:K1_local_Tor_SS}.
Indeed, by Corollary~\ref{cor:Tor_for_free_theta_vanish}, this spectral sequence collapses on the $E^2$-page and is concentrated on the zeroth vertical line.
This in particular shows that it satisfies the convergence criterion of Theorem~\ref{prop:K1_local_Tor_SS}.
Because the spectral sequence is concentrated on the zeroth vertical line, we see that the edge map
\[
    \Free_\theta[G] \hotimes_{\Free_\theta[G]} \Z_p \to \pi_* A
\]
is an isomorphism.
In other words, after taking homotopy groups, the pushout square defining $A$ becomes a pushout square
\[
    \begin{tikzcd}
        \Free_\theta[G] \arrow[r,"\theta^p - F\theta^p_G"] \arrow[d] &[1.5em] \Free_\theta[G] \arrow[d]\\
        \Z_p\arrow[r] & \pi_* A \arrow[ul, phantom, "\ulcorner", very near start]
    \end{tikzcd}
\]
in $\CAlgh_{\Z_p}^*$.

Diagram~\eqref{eq:triangle_for_presentation_KUX} becomes, after taking homotopy groups, the diagram
\[
    \begin{tikzcd}
        \Free_\theta[G] \arrow[r,"\theta^p - F\theta^p_G"] \arrow[d] &[1.5em] \Free_\theta[G] \arrow[d]\\
        \Z_p\arrow[r] & \Lambda[G] \arrow[ul, phantom, "\ulcorner", very near start]
    \end{tikzcd}
\]
from Lemma~\ref{lem:pushout_LG_free_theta_alg}, which we know is a pushout square in $\CAlgh_{\Z_p}^*$.
Thus the map $A \to \KU_p^{X_+}$ induces an isomorphism on homotopy groups, which means it is an equivalence.
\end{proof}

\section{Evaluation of the comparison map}
\label{sec:comparison_map}
Let $X$ be a pointed space as in Notation~\ref{not:ext_algebra}.
The goal of this section is to show that, under some additional conditions on $X$ (see Notation~\ref{not:ext_algebra_plus_Bousfield_stuff}), the Behrens--Rezk comparison map $c_X$ is an equivalence.
Since $\mathop{L_{\K(1)}} \Sigma^\infty_+ X$ is $\K(1)$-locally dualisable (see Lemma~\ref{lem:X_is_suitably_finite}), this implies that $X$ is $\Phi_1$-good.

In §\ref{ssec:comparison_map} we review the construction of the Behrens--Rezk comparison map $c_X$.
Topological Andr\'e--Quillen cohomology features in this comparison map, and for this reason we very briefly review it in §\ref{ssec:TAQ}.
The proof that the comparison map is an equivalence works by constructing the diagram advertised in Theorem~\ref{thm:diagram_of_fibre_seqs}; this is done in §\ref{ssec:construction_of_diagram}.
For this construction we require some computational results by Bousfield, which we summarise in §\ref{ssec:review_Bousfield_v1}.

\subsection{Topological Andr\'e--Quillen cohomology}
\label{ssec:TAQ}

\begin{theorem}
Let $\calC$ be a symmetric monoidal $\infty$-category.
Suppose that $\calC$ has all small colimits and that the tensor product functor preserves these in each variable separately.
Then the trivial algebra functor $\calC \to \CAlg^\aug(\calC)$, $Y \mapsto \mathbf{1}\oplus Y$ has a left adjoint $\TAQ_\calC$.
Moreover, we have a natural equivalence {\textup(}where $Y \in \calC${\textup)}
\[
    \TAQ_\calC(\Sym_\calC(Y)) \simeq Y.
\]
\end{theorem}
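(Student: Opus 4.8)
The plan is to deduce both assertions from standard facts about the cotangent-complex formalism, with the identification $\TAQ_\calC(\Sym_\calC(Y))\simeq Y$ reduced to formal bookkeeping of adjunctions once the left adjoint $\TAQ_\calC$ has been produced. Throughout I use that, in the setting where $\opCAlg^\aug(\calC)$ and the square-zero functor $Y\mapsto\mathbf{1}\oplus Y$ are well behaved, $\calC$ has a zero object, so $\Sym_\calC$ lifts to $\opCAlg^\aug(\calC)$ exactly as in the discussion after Theorem~\ref{thm:free_alg_in_symm_mon}.

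\textbf{Existence of $\TAQ_\calC$.} The idea is to realise the trivial-algebra functor as a section of a monadic functor. Since $\calC$ has all small colimits and $\otimes$ preserves them in each variable, the augmentation-ideal construction gives an equivalence $\opCAlg^\aug(\calC)\simeq\opCAlg^{\mathrm{nu}}(\calC)$ with nonunital commutative algebras (as in \cite[\S5.4.4]{HA}), under which the forgetful functor $U^{\mathrm{nu}}\colon\opCAlg^{\mathrm{nu}}(\calC)\to\calC$ corresponds to $B\mapsto\mathrm{fib}(B\to\mathbf{1})$. The functor $U^{\mathrm{nu}}$ is conservative and preserves sifted colimits, so by the Barr--Beck--Lurie theorem \cite[Thm.~4.7.3.5]{HA} it is monadic, with monad the free nonunital commutative algebra monad $T\colon Y\mapsto\bigoplus_{n\ge 1}(Y^{\otimes n})_{h\Sigma_n}$. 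Under this identification the trivial-algebra functor $\mathrm{triv}\colon\calC\to\opCAlg^\aug(\calC)$ becomes the functor equipping $Y$ with the zero multiplication, i.e.\ the $T$-algebra structure induced by the augmentation $T\to\mathrm{id}_\calC$. Its left adjoint, the $T$-indecomposables functor, exists because $\calC$ admits geometric realisations; explicitly it is computed by the bar construction $A\mapsto\lvert\mathrm{Bar}_\bullet(\mathrm{id}_\calC,T,A)\rvert$. We take this to be $\TAQ_\calC$. (When $\calC$ is in addition presentable, as in all cases to which we apply this, one may instead observe that $\mathrm{triv}$ preserves small limits---a limit of square-zero extensions, formed in $\opCAlg^\aug(\calC)$, is the square-zero extension of the corresponding limit---and invoke the adjoint functor theorem \cite[Cor.~5.5.2.9]{HTT}; this recovers Lurie's cotangent-fibre functor of \cite[\S7.3]{HA}.)

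\textbf{The equivalence $\TAQ_\calC(\Sym_\calC(Y))\simeq Y$.} Unwinding the universal property of the lift $\Sym_\calC\colon\calC\to\opCAlg^\aug(\calC)$ and using that $\Map_\calC(Y,-)$ preserves fibres, one checks that this lift is left adjoint to the augmentation-ideal functor $I\colon B\mapsto\mathrm{fib}(B\to\mathbf{1})$; that is, $\Sym_\calC\dashv I$. On the other hand, $\mathrm{triv}$ is a section of $I$: indeed $I(\mathbf{1}\oplus Y)=\mathrm{fib}(\mathbf{1}\oplus Y\to\mathbf{1})\simeq Y$, naturally in $Y$. Hence the composite $\TAQ_\calC\circ\Sym_\calC$, being a composite of the two left adjoints $\Sym_\calC$ and $\TAQ_\calC$, is left adjoint to the composite $I\circ\mathrm{triv}\simeq\mathrm{id}_\calC$ of the corresponding right adjoints. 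A left adjoint of the identity functor is the identity, and adjoints are unique up to canonical equivalence, so $\TAQ_\calC\circ\Sym_\calC\simeq\mathrm{id}_\calC$; being an equivalence of functors, this is automatically natural in $Y$.

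\textbf{Main obstacle.} There is no deep difficulty: the only substance is the existence of $\TAQ_\calC$, which is standard once the monadicity of $\opCAlg^\aug(\calC)$ over $\calC$ is in hand. The one point requiring care is to pin down the \emph{correct} forgetful functor for which $\Sym_\calC$ is a left adjoint---the augmentation ideal $I$, not the underlying-object functor of $\calC$---since it is precisely this that makes $\mathrm{triv}$ a section of $I$ and allows the cancellation $\TAQ_\calC\circ\Sym_\calC\simeq\mathrm{id}_\calC$ to go through.
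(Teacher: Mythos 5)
Your argument is correct, and it is essentially the argument the paper is pointing to: the paper offers no proof of its own here, only a citation to Gaitsgory--Lurie, and what you write (existence of the indecomposables functor via the augmented = nonunital identification and the bar construction resp.\ the adjoint functor theorem, then the cancellation $\TAQ_\calC\circ\Sym_\calC \dashv I\circ\mathrm{triv}\simeq \mathrm{id}$, hence $\TAQ_\calC\circ\Sym_\calC\simeq\mathrm{id}$) is the standard proof found in that reference. The adjunction bookkeeping in your second step is exactly right, including the key observation that the augmented lift of $\Sym_\calC$ is left adjoint to the augmentation ideal $I$ rather than to the underlying-object functor. One caveat worth recording: several of your steps (the equivalence $\opCAlg^\aug(\calC)\simeq\opCAlg^{\mathrm{nu}}(\calC)$, the formation of $I(B)=\mathrm{fib}(B\to\mathbf{1})$, and the alternative appeal to the adjoint functor theorem) use hypotheses beyond the literal ``all small colimits'' of the statement --- a zero object and the relevant finite limits, or presentability/stability. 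This is really a looseness of the statement itself (already the expression $\mathbf{1}\oplus Y$ and the augmented lift of $\Sym_\calC$ presuppose a zero object), and all instances used in the paper ($\Mod_A$ and $L_E\Mod_A$ for an $\E_\infty$-ring $A$) are stable and presentable, so nothing breaks; but if you want your write-up to stand alone you should state these extra hypotheses explicitly.
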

\begin{proof}
See, e.g., \cite[§4.2]{gaitsgory_lurie_tamagawa_numbers}.
\end{proof}

We call $\TAQ_\calC$ the \defi{topological Andr\'e--Quillen homology} functor of $\calC$, or \emph{TAQ-homology} for short.
Note that this goes by many different names in the literature, such as the \emph{cotangent fibre} or the \emph{derived indecomposables}.

\begin{example}
Let $A$ be an $\E_\infty$-ring spectrum.
Then we denote by $\TAQ_A$ the TAQ-homology functor of the $\infty$-category $\Mod_A$.
The $A$-linear dual $\TAQ_A(-)^\vee$ we refer to as \defi{topological Andr\'e--Quillen cohomology}.
\end{example}

\begin{example}
Let $E$ be a spectrum, and let $A$ be an $E$-local $\E_\infty$-ring spectrum.
Then the TAQ-homology functor of $\calC=\mathop{L_E}\Mod_A$ is equivalent to the composite $\mathop{L_E}\TAQ_A$.
Indeed, the trivial algebra functor $\mathop{L_E}\Mod_A \to \mathop{L_E}\CAlg^\aug_A$ is given by $M \mapsto A \oplus M$, as the direct sum $A \oplus M$ is $E$-local.
Thus the composite $\mathop{L_E}\TAQ_A$ is left adjoint to the trivial algebra functor of $\calC = \mathop{L_E}\Mod_A$.

The $E$-local version of TAQ-cohomology is the same as the ordinary version: since $A$ is $E$-local, we have an equivalence
\[
    \map_A(\mathop{L_E}\TAQ_A(-),\ A) \simeq \map_A(\TAQ_A(-),\ A).\qedhere
\]
\end{example}

\subsection{The Behrens--Rezk comparison map}
\label{ssec:comparison_map}

\begin{definition}
Let $A$ be a $\K(1)$-local $\E_\infty$-ring spectrum.
In \cite[§6]{Behrens_Rezk_Kn}, Behrens and Rezk construct a natural transformation
\[
    c_A^X\colon \mathop{L_{\K(1)}} \TAQ_A(A^{X_+}) \to A^{\Phi_1 X}
\]
of functors $\Spaces^\op_* \to \mathop{L_{\K(1)}} \Mod_A$.
We call $c_A^X$ the \defi{$A$-linear dual comparison map}.
Taking the $A$-linear dual and precomposing with the natural map
\[
    A \hotimes \Phi_1 X \to \map_A(A^{\Phi_1 X},\ A)
\]
yields the \defi{$A$-linear comparison map}
\[
    c_X^A \colon A \hotimes \Phi_1 X \to \TAQ_A(A^{X_+})^\vee,
\]
which is a natural transformation of functors $\Spaces_* \to \mathop{L_{\K(1)}}\Mod_A$.
We write $c_X$ when $A = \S_{\K(1)}$, and refer to this simply as the \defi{comparison map}.
\end{definition}

\begin{lemma}
\label{lem:Phi_good_iff_K_theoretic_equiv}
Let $X$ be a pointed space such that the spectra $\mathop{L_{\K(1)}}\Sigma^\infty_+ X$ and $\mathop{L_{\K(1)}} \TAQ_{\S_{\K(1)}}(\S_{\K(1)}^{X_+})$ are $\K(1)$-locally dualisable.
Then $c_X$ is an equivalence if and only if the $\KU_p$-linear comparison map
\[
    c_X^{\KU_p} \colon \KU_p \hotimes \Phi_1 X \to \TAQ_{\KU_p}(\KU_p^{X_+})^\vee
\]
is an equivalence.
\end{lemma}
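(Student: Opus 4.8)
The plan is to reduce the statement about the $\S_{\K(1)}$-linear comparison map $c_X$ to the $\KU_p$-linear one by applying the conservative functor $\KU_p \hotimes -$ and identifying its value on both sides with the $\KU_p$-linearised comparison map. First I would note that $\KU_p \hotimes -\colon \Sp_{\K(1)} \to \mathop{L_{\K(1)}}\Mod_{\KU_p}$ is conservative (this is used already in Remark~\ref{rmk:K_Moore_spectrum_uniqueness}), so $c_X$ is an equivalence if and only if $\KU_p \hotimes c_X$ is. It then suffices to produce a commuting square identifying $\KU_p \hotimes c_X$ with $c_X^{\KU_p}$ up to equivalences on the corners.

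The key identifications are on the two sides of $c_X$. On the source side, $\KU_p \hotimes \S_{\K(1)}\hotimes \Phi_1 X \simeq \KU_p \hotimes \Phi_1 X$, which is immediate. On the target side, one must show
\[
    \KU_p \hotimes \TAQ_{\S_{\K(1)}}(\S_{\K(1)}^{X_+})^\vee \simeq \TAQ_{\KU_p}(\KU_p^{X_+})^\vee.
\]
This is where the dualisability hypotheses enter: because $\mathop{L_{\K(1)}}\Sigma^\infty_+ X$ is $\K(1)$-locally dualisable, the cochains $\S_{\K(1)}^{X_+} \simeq \map(\Sigma^\infty_+ X,\ \S_{\K(1)})$ are dualisable, and base change along $\S_{\K(1)} \to \KU_p$ commutes with forming cochains, giving $\KU_p \otimes_{\S_{\K(1)}} \S_{\K(1)}^{X_+} \simeq \KU_p^{X_+}$ after $\K(1)$-localisation. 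Then one uses that TAQ-homology commutes with base change of the base ring — $\KU_p \otimes_{\S_{\K(1)}} \TAQ_{\S_{\K(1)}}(B) \simeq \TAQ_{\KU_p}(\KU_p \otimes_{\S_{\K(1)}} B)$ for an augmented $\S_{\K(1)}$-algebra $B$, since $\TAQ$ is a left adjoint and base change is symmetric monoidal — together with the second dualisability hypothesis (so that $\KU_p \hotimes \TAQ_{\S_{\K(1)}}(\S_{\K(1)}^{X_+})^\vee$ may be computed as the $\KU_p$-linear dual of $\KU_p \hotimes \TAQ_{\S_{\K(1)}}(\S_{\K(1)}^{X_+})$, i.e. dualising commutes with the base change). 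Finally, the naturality of the Behrens--Rezk construction in the base ring $A$ — which is how $c_X^{\KU_p}$ is defined in the first place, by $\KU_p$-linearising the construction of \cite{Behrens_Rezk_Kn} — gives that these identifications carry $\KU_p \hotimes c_X$ to $c_X^{\KU_p}$.

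The main obstacle I expect is the bookkeeping around dualisability and the two flavours of dual: one needs that $\KU_p \hotimes (-)^\vee \simeq (\KU_p \hotimes -)^{\vee_{\KU_p}}$ on the relevant spectra, which requires precisely the dualisability of $\mathop{L_{\K(1)}}\TAQ_{\S_{\K(1)}}(\S_{\K(1)}^{X_+})$ (dualisation does not commute with base change for non-dualisable objects), and separately that $\KU_p^{X_+}$ is itself $\KU_p$-dualisable so that $\TAQ_{\KU_p}(\KU_p^{X_+})$ is dualisable and the comparison map $c_X^{\KU_p}$ has a target one can dualise back. Both follow from the hypotheses via the $\K(1)$-local smash-compatibility of $\KU_p\otimes_{\S_{\K(1)}}-$, but care is needed because $\KU_p$ is not $\K(1)$-locally dualisable over $\S_{\K(1)}$, so one genuinely uses that the objects being dualised are themselves dualisable rather than that the base change is. Once these identifications are in place, the equivalence statement is a formal consequence of conservativity.
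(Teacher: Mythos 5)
Your proposal is correct and follows essentially the same route as the paper's proof: conservativity of $\KU_p \hotimes -$ on $\K(1)$-local spectra, followed by using the two dualisability hypotheses to identify $\KU_p \hotimes \br*{\TAQ_{\S_{\K(1)}}(\S_{\K(1)}^{X_+})^\vee}$ with $\TAQ_{\KU_p}(\KU_p^{X_+})^\vee$ and hence $\KU_p \hotimes c_X$ with $c_X^{\KU_p}$. You merely spell out the base-change and dualisation bookkeeping that the paper leaves implicit.
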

\begin{proof}
The functor $\KU_p \hotimes \blank \colon \Sp_{\K(1)} \to \mathop{L_{\K(1)}}\Mod_{\KU_p}$ is conservative, so $c_X$ is an equivalence if and only if $\KU_p \hotimes c_X$ is an equivalence.
Because of the dualisability assumptions, the natural map
\[
    \KU_p \hotimes \br*{\TAQ_{\S_{\K(1)}}(\S_{\K(1)}^{X_+})^\vee} \to \TAQ_{\KU_p}(\KU_p^{X_+})^\vee
\]
is an equivalence.
This identifies $\KU_p\hotimes c_X$ with $c_X^{\KU_p}$, proving the claim.
\end{proof}

\subsection{Review of Bousfield's computations}
\label{ssec:review_Bousfield_v1}

\begin{recall}
\label{recall:modification_functor}
In \cite[§3.5]{Bous_v1_Hspaces}, Bousfield constructs a functor $\widetilde{(-)}\colon \Sp_{\K(1)} \to \Sp$, together with a natural transformation to the identity.
These enjoy the following properties:
\begin{condenum}
    \item $\mathop{\pi_n} \widetilde{Y} = 0$ for $n<2$;
    \item the map $\widetilde{Y} \to Y$ induces an isomorphism $\mathop{\pi_n} \widetilde{Y}  \cong \mathop{\pi_n} Y$ for $n > 2$;
    \item the map $\widetilde{Y} \to Y$ is a $\K(1)$-localisation.
\end{condenum}
In particular, combining item~(iii) with the natural equivalence $\Phi_1 \Omega^\infty \simeq L_{\K(1)}$ (see \cite[§6]{Behrens_Rezk_Kn} or \cite[Thm.~1.1]{kuhn_guide_telescopic}), we have for $Y \in \Sp_{\K(1)}$ a natural equivalence
\begin{equation}
    \Phi_1 \Omega^\infty \, \widetilde{Y} \simeq Y.\label{eq:Phi_of_modification}
\end{equation}
\end{recall}

\begin{definition}
If $G$ is a Morava module as in §\ref{ssec:Ktheory_Moore_spectra}, then we write $\tM^\vee(G)$ for the modification of \cref{recall:modification_functor} applied to $M^\vee(G)$.
\end{definition}

Bousfield computed the K-cohomology of $\Omega^\infty \tM^\vee(G)$.
This turns out to be a type of free $\theta$-algebra on $G$, but of a slightly different kind than the free $\theta$-algebras introduced in §\ref{ssec:theta_algebras}.
Let $\overline{F}G$ denote $G \times G \times \dotsb$.
If $N$ is a $\Z_p$-module, let $\hLambda[N]$ denote
\[
    \hLambda[N] :=\lim_\alpha \Lambda_{\Z_p}[N_\alpha],
\]
where the $N_\alpha$ are the finite quotients of $N$.
The algebra $\hLambda[\overline{F}G]$ gets the structure of a $p$-adic $\theta$-algebra with Adams operations in a similar way as $\Free_\theta[G]$ from Example~\ref{ex:free_theta_alg_odd_generators}.

Note that for any $\K(1)$-local spectrum $Y$, by item~(iii) of Recollection~\ref{recall:modification_functor}, we have an equivalence
\[
    \KU_p^Y \simeq \KU_p^{\widetilde{Y}}.
\]
In particular, the counit $\Sigma_+^\infty\Omega^\infty \widetilde{Y} \to \widetilde{Y}$ induces a map
\begin{equation}
    \label{eq:counit_on_KUp}
    \KU_p^Y \to \KU_p^{\Omega^\infty \widetilde{Y}_+}.
\end{equation}

\begin{theorem}[\cite{Bous_Ktheory_infloop}; \cite{Bous_v1_Hspaces}, Thm.~3.7]
\label{thm:Ktheory_infloop}
Let $G$ be a Morava module concentrated in odd degree, which is finitely generated and free as $\Z_p$-module.
Then we have an isomorphism of $\theta$-algebras
\[
    \KU_p^*\br*{\Omega^\infty \tM^\vee(G)} \cong \hLambda[\overline{F} G].
\]
Moreover, under this isomorphism, the counit $\Sigma^\infty_+ \Omega^\infty \tM^\vee(G) \to \tM^\vee(G)$ induces the inclusion of generators $G \to \hLambda[\overline{F} G]$ on $\KU_p^*$.
\end{theorem}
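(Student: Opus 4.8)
The plan is to obtain this as a restatement of Bousfield's theorem \cite[Thm.~3.7]{Bous_v1_Hspaces}, which distils the K-theory-of-infinite-loop-space computations of \cite{Bous_Ktheory_infloop}; the only real work will be matching conventions. Recall that Bousfield computes the graded ring $\KU_p^*(\Omega^\infty\tM^\vee(G))$ together with its Adams operations $\psi^k$ ($k\in\Z_p^\times$) and the operation he writes $\psi$ on odd classes, identifies this data with the completed free object $\hLambda[\overline{F}G]$ of §\ref{ssec:review_Bousfield_v1}, and identifies the effect on K-cohomology of the counit $\Sigma^\infty_+\Omega^\infty\tM^\vee(G)\to\tM^\vee(G)$ with the inclusion $G\hookrightarrow\hLambda[\overline{F}G]$. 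So I would first simply quote this, after recording that his modification functor is the one with properties~(i)--(iii) reviewed at the start of §\ref{ssec:review_Bousfield_v1}, that his spectrum $\mathscr{M}(G,d)$ is our $M^\vee(G)$, and that the identification $\KU_p^*(\tM^\vee(G))\cong G$ used throughout is the one induced by the $\K(1)$-localisation $\tM^\vee(G)\to M^\vee(G)$.

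The second step upgrades Bousfield's ring-with-operations to a genuine isomorphism in our category of $p$-adic $\theta$-algebras with Adams operations. By Remark~\ref{rmk:bousfield_notation_psi_theta}, Bousfield's operation on an odd class is precisely what Definition~\ref{def:theta_alg} calls $\theta^p$ on an odd class; and since $\hLambda[\overline{F}G]$ is topologically generated as a $\Z_p$-algebra by the odd submodule $\overline{F}G$, the additivity axiom~\ref{cond:additivity_theta}, the product formula of Definition~\ref{def:theta_alg}, and continuity of $\theta^p$ extend $\theta^p$ uniquely from these generators to the whole ring. Hence Bousfield's data already determines the full $\theta$-algebra structure, and on the generators it is visibly the shift operation that defines the $\theta$-algebra structure of $\hLambda[\overline{F}G]$ (built as in Example~\ref{ex:free_theta_alg_odd_generators}), so the two agree. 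For the completeness bookkeeping: Bousfield's $p$-adic $\theta$-algebras are complete in a stronger sense, but by Remark~\ref{rmk:difference_bousfield_meaning_padic} they determine objects in our sense, and $\hLambda[\overline{F}G]=\lim_\alpha\Lambda_{\Z_p}[(\overline{F}G)_\alpha]$ is already derived $p$-complete (a limit of finite, hence derived $p$-complete, $\Z_p$-algebras), so it lies in $\CAlgh^*_{\Z_p}$ and matches the object Bousfield produces. The \emph{moreover} clause is part of the cited statement.

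I expect the only obstacle to be exactly this bookkeeping: there is no new geometric input to supply, just the careful comparison of Bousfield's normalisation of the odd operation with our $\theta^p$ and of his $p$-completeness with derived $p$-completeness. Should one instead want a self-contained argument, one would have to reprove Bousfield's computation directly --- resolving $\Omega^\infty\tM^\vee(G)$ through its Postnikov tower, computing the $\KU_p$-cohomology of the Eilenberg--MacLane layers and of the $k$-invariant fibrations, and assembling the tower --- which is substantially longer and is essentially the content of \cite{Bous_Ktheory_infloop}.
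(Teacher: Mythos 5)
Your proposal matches the paper's treatment: Theorem~\ref{thm:Ktheory_infloop} is imported directly from Bousfield (\cite{Bous_Ktheory_infloop}; \cite[Thm.~3.7]{Bous_v1_Hspaces}) with no independent proof, the only content being exactly the convention-matching you describe (his odd-degree $\psi$ versus $\theta^p$ as in Remark~\ref{rmk:bousfield_notation_psi_theta}, his stricter $p$-completeness versus derived $p$-completeness as in Remark~\ref{rmk:difference_bousfield_meaning_padic}, and $\mathscr{M}(G,d)$ versus $M^\vee(G)$). So your write-up is correct and takes essentially the same route as the paper.
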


Bousfield then used these infinite loop spaces to compute $\Phi_1 X$ for certain $X$.
In the following theorem, we write $\theta^p - \overline{F}\theta^p_G \colon \hLambda[\overline{F}G] \to \hLambda[\overline{F}G]$ for the map defined in the same manner as $\theta^p - F\theta^p_G$ in Definition~\ref{def:theta_minus_Ftheta}, replacing $FG$ by $\overline{F}G$ and $\mathop{L_0}\Lambda[-]$ by $\hLambda[-]$ in the construction.

\begin{theorem}[\cite{Bous_v1_Hspaces}, Thm.~4.8, Thm.~8.1]
\label{thm:Phi_fibre_seq_bousfield}
Let $X$ be a pointed space as in Notation~{\upshape\ref{not:ext_algebra_plus_Bousfield_stuff}}.
Then there exists a commutative diagram of pointed spaces of the form
\[
    \begin{tikzcd}
        \mathop{L_{\K(1)}} X \arrow[r,"h"] \arrow[d] & \Omega^\infty \tM^\vee(G) \arrow[d,"f"]\\
        * \arrow[r] & \Omega^\infty \tM^\vee(G)
    \end{tikzcd}
\]
with the following properties.
\begin{condenum}
    \item After application of $\KU_p^*$, the map $h$ becomes the natural quotient $\hLambda[\overline{F}G] \to \Lambda[G]$.
    \item After application of $\KU_p^*$, the map $f$ becomes $\theta^p - \overline{F}\theta^p_G$; cf.\ Definition~{\upshape\ref{def:theta_minus_Ftheta}}.
    \item After application of $\Phi_1$, the square becomes a pullback square.
    Using the natural identification~\eqref{eq:Phi_of_modification}, this is a fibre sequence of $\K(1)$-local spectra of the form
    \[
        \Phi_1 X \to M^\vee(G) \to M^\vee(G).
    \]
\end{condenum}
\end{theorem}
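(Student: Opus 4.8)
The plan is to deduce the statement from Bousfield's analysis of the infinite loop spaces $\Omega^\infty \tM^\vee(G)$, using K-cohomology as the organising invariant. By Theorem~\ref{thm:Ktheory_infloop} there is an identification $\KU_p^*(\Omega^\infty \tM^\vee(G)) \cong \hLambda[\overline{F}G]$ of $\theta$-algebras, under which the counit induces the inclusion $G \hookrightarrow \hLambda[\overline{F}G]$ of exterior generators. Algebraically there are two natural maps to realise: the quotient of $\theta$-algebras $q\colon \hLambda[\overline{F}G] \to \Lambda[G]$ which on the $t$-th copy of $G$ is $(\theta^p_G)^{\circ t}$, and the endomorphism $\theta^p - \overline{F}\theta^p_G$ of $\hLambda[\overline{F}G]$ (defined exactly as in Definition~\ref{def:theta_minus_Ftheta}). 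To realise these topologically I would invoke Bousfield's representability results for these K-theoretic infinite loop spaces \cite{Bous_Ktheory_infloop, Bous_Ktheory_Hspaces}: for spaces in the class of Notation~\ref{not:ext_algebra}, maps into $\Omega^\infty \tM^\vee(G)$ are faithfully recorded by, and can be built from, their effect on $\KU_p^*$. Applying this to the Morava-module composite $G \hookrightarrow \hLambda[\overline{F}G] \to \Lambda[G] = \KU_p^*(X)$ (the second map being $q$) produces the map $h\colon L_{\K(1)}X \to \Omega^\infty \tM^\vee(G)$ inducing $q$, and applying it to the generator-level map underlying $\theta^p - \overline{F}\theta^p_G$ produces the self-map $f$; this is properties (i) and (ii).

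Next one checks that the square commutes, i.e.\ that $f\circ h$ is nullhomotopic. On $\KU_p^*$ the composite is $q\circ(\theta^p - \overline{F}\theta^p_G)$, which sends a generator $x \in G$ to $q(\theta^p x - \theta^p_G x) = \theta^p_G x - \theta^p_G x = 0$; being a map of $\theta$-algebras it therefore vanishes identically, and the representability statement then forces $f\circ h \simeq \ast$. Choosing such a nullhomotopy yields the commutative square in $\Spaces_*$.

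Finally, apply the height~$1$ Bousfield--Kuhn functor $\Phi_1$. Since $\Phi_1$ inverts the $\K(1)$-localisation map of a space, $\Phi_1(L_{\K(1)}X) \simeq \Phi_1 X$; and since $\Phi_1\circ\Omega^\infty \simeq L_{\mathrm{T}(1)} \simeq L_{\K(1)}$ (the telescope conjecture at height~$1$) while $\tM^\vee(G) \to M^\vee(G)$ is a $\K(1)$-localisation, we get $\Phi_1(\Omega^\infty\tM^\vee(G)) \simeq M^\vee(G)$. Thus $\Phi_1$ of the square is a square of $\K(1)$-local spectra with corners $\Phi_1 X$, $M^\vee(G)$, $M^\vee(G)$, $\ast$, and it remains to show it is a pullback, equivalently that $\Phi_1 X \to \mathrm{fib}(\Phi_1 f)$ is an equivalence. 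This is the crux, and is precisely the content of Bousfield's computations \cite[Thm.~4.8, Thm.~8.1]{Bous_v1_Hspaces}. Since $\KU_p \hotimes (-)$ is conservative on $\Sp_{\K(1)}$, it suffices to prove the map is an isomorphism on $\compKU$: on the right one reads off $\compKU(\Phi_1 f)$, and hence $\compKU$ of its fibre, from the explicit stable description of $f$, while on the left one uses Bousfield's direct computation of the $v_1$-periodic homotopy type of $X$ in terms of the $\theta$-algebra $\Lambda[G]$; one then checks the two descriptions agree. I expect this last comparison to be the main obstacle: it is not formal, it rests on genuinely unstable input, and it is where the extra hypotheses of Notation~\ref{not:ext_algebra_plus_Bousfield_stuff} are used, to guarantee that the modification $\tM^\vee$ is well behaved and that the relevant $\lim^1$-terms vanish, so that K-cohomology really does control the homotopy types in question.
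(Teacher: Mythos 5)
This theorem is not proved in the paper at all: it is imported verbatim from Bousfield, and the paper's entire ``proof'' is the citation to \cite[Thm.~4.8, Thm.~8.1]{Bous_v1_Hspaces}, after which a choice of such a diagram is simply fixed. Your proposal ultimately also defers the essential content (property (iii), and really the existence of the whole square) to Bousfield, which is the right thing to do; but the way you set it up introduces two genuine gaps. First, your commutativity argument rests on the claim that maps into $\Omega^\infty \tM^\vee(G)$ are ``faithfully recorded'' by their effect on $\KU_p^*$. Bousfield's representability results for these K-theoretic infinite loop spaces give \emph{realisability} (surjectivity onto $\theta$-algebra maps under suitable hypotheses), not injectivity; indeed the paper explicitly remarks, right after the theorem, that $f$ and $h$ are not unique up to homotopy, so homotopy classes of such maps are not detected by $\KU_p^*$. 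Consequently, knowing that $f\circ h$ vanishes on K-cohomology does not formally yield $f\circ h \simeq \ast$; the homotopy-commutative (in fact fibre) square has to be produced by Bousfield's Theorem~4.8 itself, not reassembled from separately chosen $h$ and $f$.

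Second, and relatedly, if you construct your own $h$ and $f$ by realising the algebraic maps $q$ and $\theta^p-\overline{F}\theta^p_G$, you cannot then quote \cite[Thm.~8.1]{Bous_v1_Hspaces} to conclude that $\Phi_1$ of \emph{your} square is a pullback: that theorem concerns the specific square Bousfield constructs, and by the non-uniqueness just mentioned your square need not agree with his. The logically clean route (and the one the paper takes) is to let Bousfield's theorems supply the square together with properties (i)--(iii) simultaneously. Two smaller points: your assertion that ``$\Phi_1$ inverts the $\K(1)$-localisation map of a space'' is false in general --- it is precisely the $\K(1)$-durability hypothesis (v) of Notation~\ref{not:ext_algebra_plus_Bousfield_stuff}, so it should be invoked as an assumption, not a general property of $\Phi_1$; on the other hand your identifications $\Phi_1\Omega^\infty \simeq L_{\mathrm{T}(1)} \simeq L_{\K(1)}$ and $L_{\K(1)}\tM^\vee(G)\simeq M^\vee(G)$ are fine and match how the paper uses the theorem later.
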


The maps $f$ and $h$ in the above theorem are in general not unique up to homotopy.
We shall henceforth fix a choice of such a diagram for every such $X$.

\begin{remark}
If $\theta^p_G$ is injective, then the fibre sequence of Theorem~\ref{thm:Phi_fibre_seq_bousfield}\,(iii) simplifies to an equivalence $\Phi_1 X \simeq M^\vee(G/\theta^p_G)$.
\end{remark}

\begin{remark}
\label{rmk:mistake_bousfield99_thm48}
There seems to be a missing assumption in Theorem~4.8 of \cite{Bous_v1_Hspaces}.
The theorem ends with the conclusion that $\uH^1(X;\Z_p)$ and $\uH^2(X;\Z_p)$ vanish under the assumptions posed there, but $X = S^1$ is a counterexample to this.
Because the vanishing of $\uH^1(X;\Z_p)$ and $\uH^2(X;\Z_p)$ is necessary for later results of \cite{Bous_v1_Hspaces} (in particular for Theorem~8.1) we have included it as a separate assumption in Notation~\ref{not:ext_algebra_plus_Bousfield_stuff} below.
Requiring $G$ to be \emph{reduced} in the sense of Definition~2.8 of op.\ cit.\ would imply $\uH^1(X;\Z_p) = 0$ (see also §2.9 of op.\ cit.).
\end{remark}

\subsection{Construction of the diagram}
\label{ssec:construction_of_diagram}

\begin{center}
    \textit{Throughout this section, we follow Notation~{\upshape\ref{not:ext_algebra_plus_Bousfield_stuff}}.}
\end{center}

\begin{notation}
\label{not:ext_algebra_plus_Bousfield_stuff}
Let $X$ be a pointed space as in Notation~\ref{not:ext_algebra} that also satisfies the conditions of \cite[Thm.~8.1]{Bous_v1_Hspaces}.
Concretely, this means that we fix a pair $(G,\theta^p_G)$ of a Morava module with an endomorphism, such that
\begin{condenum}
    \item $G$ is finitely generated and free as a $\Z_p$-module, and concentrated in odd degree;
    \item $\KU_p^*(X)$ is an exterior $\theta$-algebra on $(G,\theta^p_G)$, or more precisely, we fix an isomorphism $\KU_p^*(X) \cong \Lambda[G]$ of $\theta$-algebras, cf.\ Definition~\ref{def:exterior_theta_alg};
    \item the pair $(G,\theta^p_G)$ is \emph{regular} in the sense of \cite[Def.~4.4]{Bous_v1_Hspaces};
    \item the space $X$ is connected and the groups $\uH^1(X;\Z_p)$ and $\uH^2(X;\Z_p)$ vanish;
    \item the space $X$ is \emph{$\K(1)$-durable}, i.e., the map $X \to \mathop{L_{\K(1)}}X$ induces an isomorphism on $v_1$-periodic homotopy groups, i.e., it induces an equivalence $\Phi_1X \simeq \Phi_1(L_{\K(1)}X)$.
\end{condenum}
\end{notation}

\begin{remark}
Bousfield shows that for many H-spaces, the pair $(G,\theta^p_G)$ is regular; see \cite[Lem.~6.1]{Bous_v1_Hspaces}.
He also shows that H-spaces are $\K(1)$-durable; see \cite[Cor.~7.8]{Bous_v1_Hspaces}.
In particular, all simply connected compact Lie groups satisfy the above conditions.
More generally, if $X$ is an H-space such that $\uH_*(X;\Q)$ is associative and $\uH_*(X;\Z_{(p)})$ is finitely generated over $\Z_{(p)}$, then $X$ satisfies the conditions of Notation~\ref{not:ext_algebra_plus_Bousfield_stuff}; see \cite[Thm.~6.3]{Bous_v1_Hspaces}.
\end{remark}

Our goal is to show that $c_X$ is an equivalence for such $X$.
First we show that we can equivalently check this on the $\KU_p$-linear comparison map.
It turns out that this does not require the additional assumptions from Notation~\ref{not:ext_algebra_plus_Bousfield_stuff}: those from Notation~\ref{not:ext_algebra} suffice.

\begin{lemma}
\label{lem:X_is_suitably_finite}
Let $X$ be a pointed space as in Notation~{\upshape\ref{not:ext_algebra}}.
Then $X$ satisfies the conditions of Lemma~{\upshape\ref{lem:Phi_good_iff_K_theoretic_equiv}}, i.e., $\mathop{L_{\K(1)}}\Sigma^\infty_+ X$ and $\mathop{L_{\K(1)}} \TAQ_{\S_{\K(1)}}(\S_{\K(1)}^{X_+})$ are $\K(1)$-locally dualisable.
\end{lemma}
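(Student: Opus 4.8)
The plan is to verify dualisability separately for the two spectra, with $\mathop{L_{\K(1)}}\Sigma^\infty_+ X$ being the easy half and the TAQ-term requiring the presentation from Theorem~\ref{thm:cochain_presentation}.

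First I would handle $\mathop{L_{\K(1)}}\Sigma^\infty_+ X$. A $\K(1)$-local spectrum is dualisable if and only if its completed $\KU_p$-homology is a finitely generated $\Z_p$-module; this is \cite[Thm.~8.6]{hovey_strickland_Ktheories_localisation}, already invoked in \S\ref{ssec:Ktheory_Moore_spectra}. Since $\KU_p^*(X) \cong \Lambda[G]$ with $G$ finitely generated and free over $\Z_p$ and concentrated in odd degree, the exterior algebra $\Lambda[G]$ is finitely generated over $\Z_p$. By Corollary~\ref{cor:Kcohom_from_Khom} (applicable since $\Lambda[G]$ is pro-free), $\compKU(\Sigma^\infty_+ X)$ is the $\Z_p$-linear dual of $\KU_p^*(X)$, hence also finitely generated, so $\mathop{L_{\K(1)}}\Sigma^\infty_+ X$ is dualisable. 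Equivalently one may observe directly that $\mathop{L_{\K(1)}}\Sigma^\infty_+ X$ is the $\K(1)$-local Spanier--Whitehead dual of a finite spectrum built from Moore-type pieces, but the homology criterion is cleanest.

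For the TAQ-term, the key input is that TAQ-homology, being a left adjoint, sends the cofibre sequence of Theorem~\ref{thm:cochain_presentation} to a cofibre sequence, and $\TAQ$ of a symmetric algebra is computed by the equivalence $\TAQ_\calC(\Sym_\calC(Y)) \simeq Y$ from \S\ref{ssec:TAQ}. Applying $\mathop{L_{\K(1)}}\TAQ_{\S_{\K(1)}}$ to
\[
    \KU_p \hotimes \Sym(M(G)) \xrightarrow{\ R\ } \KU_p \hotimes \Sym(M(G)) \xrightarrow{\ T\ } \KU_p^{X_+}
\]
is not quite right since this is a $\KU_p$-linear presentation, not an $\S_{\K(1)}$-linear one; instead I would work $\KU_p$-linearly, using the $\KU_p$-linear version of the comparison setup. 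Applying $\mathop{L_{\K(1)}}\TAQ_{\KU_p}$ to the cofibre sequence and using $\TAQ_{\KU_p}(\KU_p \hotimes \Sym(M(G))) \simeq \KU_p \hotimes M(G)$ yields a cofibre sequence
\[
    \KU_p \hotimes M(G) \to \KU_p \hotimes M(G) \to \mathop{L_{\K(1)}}\TAQ_{\KU_p}(\KU_p^{X_+}).
\]
Since $M(G)$ is $\K(1)$-locally dualisable (as $\compKU(M(G)) \cong G$ is finitely generated, by Lemma~\ref{lem:K_Moore_spectrum} and \cite[Thm.~8.6]{hovey_strickland_Ktheories_localisation}), so is $\KU_p \hotimes M(G)$, and dualisable objects are closed under cofibres in a stable $\infty$-category; hence $\mathop{L_{\K(1)}}\TAQ_{\KU_p}(\KU_p^{X_+})$ is $\KU_p$-dualisable. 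To pass from this to $\KU_p$-dualisability implying that $\mathop{L_{\K(1)}}\TAQ_{\S_{\K(1)}}(\S_{\K(1)}^{X_+})$ is $\S_{\K(1)}$-dualisable, I would argue that $\KU_p \hotimes {-}$ is conservative on $\Sp_{\K(1)}$ and reflects dualisability when the unit $\S_{\K(1)} \to \KU_p$ is suitably descendable (which it is, $\KU_p$ being a faithful $\K(1)$-local $\E_\infty$-ring, by the Devinatz--Hopkins / Galois-descent results); so $\KU_p \hotimes \mathop{L_{\K(1)}}\TAQ_{\S_{\K(1)}}(\S_{\K(1)}^{X_+}) \simeq \mathop{L_{\K(1)}}\TAQ_{\KU_p}(\KU_p^{X_+})$ being dualisable forces the original to be.

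\textbf{The main obstacle} is the descent step: justifying that $\KU_p$-dualisability of the base-change descends to $\S_{\K(1)}$-dualisability. This needs that $\S_{\K(1)} \to \KU_p$ is a descendable (equivalently, $1$-descendable or just descendable) $\E_\infty$-map in $\Sp_{\K(1)}$, so that dualisability can be detected after base change. One clean route is to use that $\KU_p$ is a $K(1)$-locally faithful Galois extension of $\S_{\K(1)}$ with group $\Z_p^\times$ (Devinatz--Hopkins), which is descendable; then a dualisable-object descent statement (e.g.\ via the comonadic description of $\Sp_{\K(1)}$ as modules over the descendable algebra) gives the conclusion. Alternatively, one avoids descent entirely by proving the $\S_{\K(1)}$-linear statement directly: $\S_{\K(1)}^{X_+}$ admits an $\S_{\K(1)}$-linear presentation by $K(1)$-local symmetric algebras on finite $\K(1)$-local spectra (this is implicit in the Behrens--Rezk framework, since $X$ has finitely generated $K$-theory), whence the same cofibre-sequence argument applies verbatim over $\S_{\K(1)}$. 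I would present the descent argument as the primary one, remarking that the reader may substitute the direct construction if preferred.
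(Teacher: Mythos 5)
Your proposal is correct in substance, and its core computation coincides with the paper's: both apply $\mathop{L_{\K(1)}}\TAQ_{\KU_p}$ to the presentation of Theorem~\ref{thm:cochain_presentation} to obtain the cofibre sequence $\KU_p\hotimes M(G) \to \KU_p\hotimes M(G) \to \mathop{L_{\K(1)}}\TAQ_{\KU_p}(\KU_p^{X_+})$, and both use the base-change identification of the last term with $\KU_p\hotimes \mathop{L_{\K(1)}}\TAQ_{\S_{\K(1)}}(\S_{\K(1)}^{X_+})$. Where you genuinely diverge is the final step: you first establish dualisability of $\mathop{L_{\K(1)}}\TAQ_{\KU_p}(\KU_p^{X_+})$ as a $\KU_p$-module (dualisables are closed under cofibres) and then descend to $\S_{\K(1)}$-dualisability via descendability of $\S_{\K(1)}\to\KU_p$ (the Devinatz--Hopkins Galois extension plus Mathew-style descent for dualisable objects). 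That works, but it imports machinery the paper does not need: Hovey--Strickland's Theorem~8.6 is already a criterion for dualisability in $\Sp_{\K(1)}$, i.e.\ over the $\K(1)$-local sphere, in terms of finite generation of completed $\KU_p$-homology (equivalently of $\KU_p$-cohomology), so the paper simply reads off from the long exact sequence of the cofibre sequence that $\pi_*\mathop{L_{\K(1)}}\TAQ_{\KU_p}(\KU_p^{X_+}) \cong \compKU\big(\TAQ_{\S_{\K(1)}}(\S_{\K(1)}^{X_+})\big)$ is finitely generated over $\Z_p$ (using that $G$ is finitely generated and $\Z_p$ is Noetherian) and concludes directly, with no descent argument. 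Your route buys independence from the precise form of the Hovey--Strickland criterion at the cost of citing Galois/descent theory; the paper's route is shorter and self-contained given Theorem~8.6. One small slip in your first half: Corollary~\ref{cor:Kcohom_from_Khom} goes the other way (it computes cohomology from pro-free homology), so it does not by itself yield finite generation of $\compKU(\Sigma^\infty_+X)$ from that of $\KU_p^*(X)$; this is harmless, since Hovey--Strickland accept finite generation of $\KU_p^*(X)$ directly as the dualisability criterion, which is exactly how the paper argues.
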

\begin{proof}
By \cite[Thm.~8.6]{hovey_strickland_Ktheories_localisation}, it suffices to show that the $\Z_p$-modules.
\[
    \KU_p^*(X) \qquad \text{and} \qquad \compKU\br*{\TAQ_{\S_{\K(1)}}(\S_{\K(1)}^{X_+})}
\]
are finitely generated.
The first of these is finitely generated by assumption.
Applying $\K(1)$-local TAQ-homology over $\KU_p$ to the cofibre sequence of Theorem~\ref{thm:cochain_presentation}, we obtain a cofibre sequence
\[
    \KU_p \hotimes M(G) \to \KU_p\hotimes M(G) \to \mathop{L_{\K(1)}}\TAQ_{\KU_p}(\KU_p^{X_+})
\]
of $\K(1)$-local $\KU_p$-modules.
Since $\pi_*(\KU_p\hotimes M(G)) \cong G$ is finitely generated, this shows that
\[
    \pi_*\br*{\mathop{L_{\K(1)}}\TAQ_{\KU_p}(\KU_p^{X_+})}
\]
is also finitely generated.
Because we have an equivalence
\[
    \mathop{L_{\K(1)}}\TAQ_{\KU_p}(\KU_p^{X_+}) \simeq \KU_p\hotimes \mathop{L_{\K(1)}} \TAQ_{\S_{\K(1)}}(\S_{\K(1)}^{X_+}),
\]
this establishes the claim.
\end{proof}

Before we proceed with the construction of the diagram advertised in Theorem~\ref{thm:diagram_of_fibre_seqs}, we require some notation and a small fact about the comparison map.
The counit $\Sigma^\infty_+ \Omega^\infty Y \to Y$ induces a natural transformation
\begin{equation}
    \label{eq:eps_map}
    \eps_Y \colon \Sym_{\KU_p}(\KU_p^Y) \to \KU_p^{\Omega^\infty Y_+}
\end{equation}
of functors $\Sp \to \CAlg_{\KU_p}^\aug$.
This in turn induces a natural transformation
\[
    \TAQ(\eps)_Y \colon \TAQ_{\KU_p}(\Sym_{\KU_p}(\KU_p^Y)) \to \TAQ_{\KU_p}(\KU_p^{\Omega^\infty Y_+})
\]
of functors $\Sp \to \Mod_{\KU_p}$.

The following result was originally proved only when working over $\S_{\K(n)}$, but the proof applies equally well to the $\KU_p$-linear setting.
\begin{lemma}[\cite{Behrens_Rezk_Kn}, Lem.~6.1]
\label{lem:behrens_rezk_id}
If $Y$ is a spectrum, then the composite
\[
    \begin{tikzcd}
        \KU_p^Y \arrow[d,"\simeq"'] &[1.5em] &[1.2em] \KU_p^Y \\
        \TAQ_{\KU_p}(\Sym_{\KU_p}(\KU_p^Y)) \arrow[r, "\TAQ(\eps)_Y"] &[1.5em] \TAQ_{\KU_p}\br*{\KU_p^{\Omega^\infty Y_+}} \arrow[r, "c^{\,\Omega^\infty Y}_{\KU_p}"] &[1.2em] \KU_p^{\Phi_1(\Omega^\infty Y)} \arrow[u,"\simeq"']
    \end{tikzcd}
\]
is the identity.
\end{lemma}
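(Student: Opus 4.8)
The plan is to reduce this statement, via the Behrens--Rezk naturality of the comparison map, to the already-recorded fact that $\TAQ$ takes the symmetric algebra to the identity functor and that the comparison map is compatible with the counit $\Sigma^\infty_+ \Omega^\infty Y \to Y$. Concretely, recall the construction of $c_{\KU_p}^X$: it arises as the $A$-linear dual of a map $c_A^X\colon L_{\K(1)}\TAQ_A(A^{X_+}) \to A^{\Phi_1 X}$ which Behrens and Rezk build as a natural transformation of functors on pointed spaces. The central input is the \emph{naturality} of this transformation applied to the unit map $X \to \Omega^\infty \Sigma^\infty_+ X$, or rather, in the present infinite-loop-space setting, to the counit $\Sigma^\infty_+\Omega^\infty Y \to Y$ viewed as a map of spectra. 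The strategy is therefore to set up a commuting square expressing this naturality and then feed in the identity $\TAQ_{\KU_p}(\Sym_{\KU_p}(M)) \simeq M$ and the fact that the comparison map for an infinite loop space, precomposed with the appropriate unit, is an equivalence onto the stable part.

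First I would unwind the definition of the vertical equivalence on the left. It is the composite of $\KU_p^Y$ with the canonical equivalence $\TAQ_{\KU_p}(\Sym_{\KU_p}(\KU_p^Y)) \simeq \KU_p^Y$ coming from the theorem that $\TAQ_\calC \circ \Sym_\calC \simeq \id$; this is the natural transformation whose existence is recorded just before the lemma. Next I would recall that $c_{\KU_p}^{\Omega^\infty Y}$ is, by the Behrens--Rezk construction, the map obtained by dualising the natural transformation $c_A^{(-)}$ and precomposing with $A \hotimes \Phi_1(-) \to \map_A(A^{\Phi_1(-)}, A)$; the key point from \cite[Lem.~6.1]{Behrens_Rezk_Kn} is that the comparison map intertwines the TAQ-of-symmetric-algebra structure with the stabilisation of $\Phi_1$ via the counit, i.e., there is a commuting diagram relating $\TAQ(\eps)_Y$, $c^{\Omega^\infty Y}_{\KU_p}$, and the natural stabilisation map $\KU_p^Y \to \KU_p^{\Phi_1(\Omega^\infty Y)}$. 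Since the lemma statement is precisely that this stabilisation map, when composed with the two canonical equivalences, is the identity on $\KU_p^Y$, the proof amounts to transcribing the Behrens--Rezk argument in the $\KU_p$-linear setting.

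The main obstacle---which is really just a bookkeeping hurdle rather than a conceptual one---is to verify that every step of the original Behrens--Rezk proof, which was carried out over $\S_{\K(n)}$, goes through verbatim after base-changing to $\KU_p$. This rests on two things I would check explicitly: that the symmetric algebra and TAQ functors commute with the base change $\S_{\K(1)} \to \KU_p$ in the relevant sense (which follows from the $E$-local discussion of $\Sym$ and $\TAQ$ in §\ref{ssec:higher_algebra} and §\ref{ssec:TAQ}, together with the formula $B\otimes_A \Sym_A(M) \simeq \Sym_B(B\otimes_A M)$), and that the comparison map $c_A^X$ is natural in the $\E_\infty$-ring $A$, so that $c^{\Omega^\infty Y}_{\KU_p}$ is literally the $\KU_p$-linearisation of $c^{\Omega^\infty Y}_{\S_{\K(1)}}$. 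Granting these, the diagram chase is formal: the left-hand equivalence identifies $\TAQ_{\KU_p}(\Sym_{\KU_p}(\KU_p^Y))$ with $\KU_p^Y$, the map $\TAQ(\eps)_Y$ followed by $c^{\Omega^\infty Y}_{\KU_p}$ is identified with the stabilisation $\KU_p^Y \to \KU_p^{\Phi_1(\Omega^\infty Y)}$ by the Behrens--Rezk compatibility, and the right-hand equivalence is inverse to that stabilisation because $\Phi_1$ of an infinite loop space recovers the stable homotopy type in the appropriate range. Composing, one lands back at $\id_{\KU_p^Y}$.

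I would conclude by remarking that no new idea beyond \cite[Lem.~6.1]{Behrens_Rezk_Kn} is needed; the content of the lemma is that the cited result is stated and proved in a way that is insensitive to whether one works over the $\K(1)$-local sphere or over $\KU_p$, and the one-line proof in the paper should simply say this, pointing to the naturality of the comparison map in the coefficient ring and to the $E$-local formulas for $\Sym$ and $\TAQ$ that allow the base change.
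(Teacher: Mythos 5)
Your bottom line coincides with the paper's, which in fact offers no proof at all: it states the lemma with a citation to Behrens--Rezk and the single remark that their proof, originally written over $\S_{\K(n)}$, applies equally well in the $\KU_p$-linear setting. To the extent your proposal says ``rerun the Behrens--Rezk argument with $\KU_p$ coefficients,'' you are doing exactly what the paper does.

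However, the specific mechanism you lean on for the transfer is flawed. You propose to check that $\Sym$ and $\TAQ$ commute with base change along $\S_{\K(1)} \to \KU_p$ and that $c^{\Omega^\infty Y}_{\KU_p}$ is ``literally the $\KU_p$-linearisation'' of $c^{\Omega^\infty Y}_{\S_{\K(1)}}$, and then to conclude by a formal diagram chase from the sphere-level lemma. This would require $\KU_p^{(\Omega^\infty Y)_+}$ to be the base change $\KU_p \hotimes \S_{\K(1)}^{(\Omega^\infty Y)_+}$, and likewise $\KU_p^Y \simeq \KU_p \hotimes \S_{\K(1)}^Y$; but $\Omega^\infty Y$ is not a finite space and $Y$ is an arbitrary spectrum, so neither identification holds in general --- this is precisely why the paper imposes dualisability hypotheses in Lemma~\ref{lem:Phi_good_iff_K_theoretic_equiv} before comparing the two comparison maps. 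Naturality in the coefficient ring only yields a square whose vertical maps $\KU_p \hotimes \S_{\K(1)}^{(-)} \to \KU_p^{(-)}$ are not equivalences, so the $\KU_p$-linear identity cannot be formally deduced from the sphere-level one this way. The correct justification is the other route you mention in passing: the Behrens--Rezk construction of the dual comparison map $c_A^X$ makes sense for any $\K(1)$-local $\E_\infty$-ring $A$, and their proof of Lem.~6.1 goes through verbatim with $A = \KU_p$ substituted for $\S_{\K(n)}$ throughout; one repeats the proof $\KU_p$-linearly rather than base-changing the statement.
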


Denote by $\TAQ(\eps)^\vee_Y$ the $\KU_p$-linear dual of $\TAQ(\eps)_Y$.
From the definition of the comparison map, we deduce the following.

\begin{corollary}
\label{cor:left_inv_for_comp_map}
If $Y$ is a $\K(1)$-locally dualisable spectrum, then the composite
\[
    \begin{tikzcd}
        \KU_p \hotimes Y \arrow[d,"\simeq"'] &[1.2em] &[1.5em] \map(Y^\vee, \KU_p) \\
        \KU_p\hotimes \Phi_1 (\Omega^\infty Y) \arrow[r, "c_{\Omega^\infty Y}^{\KU_p}"] & \TAQ_{\KU_p}(\KU_p^{\Omega^\infty Y_+})^\vee \arrow[r, "\TAQ(\eps)^\vee_Y"] & \TAQ_{\KU_p}(\Sym_{\KU_p}(\KU_p^Y))^\vee \arrow[u,"\simeq"']
    \end{tikzcd}
\]
is an equivalence.
\end{corollary}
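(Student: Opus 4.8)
The plan is to deduce this from Lemma~\ref{lem:behrens_rezk_id} by passing to $\KU_p$-linear duals, with the dualisability of $Y$ needed only to rewrite the dual comparison map $c^{\,\Omega^\infty Y}_{\KU_p}$ in terms of the linear comparison map $c^{\KU_p}_{\Omega^\infty Y}$. Concretely, I would first apply the functor $\map_{\KU_p}(-,\KU_p)$ to the composite of Lemma~\ref{lem:behrens_rezk_id}. Since that composite is the identity of $\KU_p^Y$, and dualising reverses the order of a composite, this shows that
\[
    \begin{aligned}
        (\KU_p^Y)^\vee \to (\KU_p^{\Phi_1(\Omega^\infty Y)})^\vee &\xrightarrow{(c^{\,\Omega^\infty Y}_{\KU_p})^\vee} \TAQ_{\KU_p}(\KU_p^{\Omega^\infty Y_+})^\vee\\
        &\xrightarrow{\TAQ(\eps)^\vee_Y} \TAQ_{\KU_p}(\Sym_{\KU_p}(\KU_p^Y))^\vee \to (\KU_p^Y)^\vee
    \end{aligned}
\]
is the identity, where the two outer maps are the $\KU_p$-linear duals of the equivalences of that lemma, hence themselves equivalences. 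In particular the composite $\TAQ(\eps)^\vee_Y \circ (c^{\,\Omega^\infty Y}_{\KU_p})^\vee$ is an equivalence.

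It then remains to match this up with the composite in the statement. On the one hand, $\TAQ_{\KU_p}(\Sym_{\KU_p}(\KU_p^Y)) \simeq \KU_p^Y$, and since $Y$ is $\K(1)$-locally dualisable we have $\KU_p^Y = \map(Y,\KU_p) \simeq \KU_p\hotimes Y^\vee$ with $Y^{\vee\vee}\simeq Y$, so that $\TAQ_{\KU_p}(\Sym_{\KU_p}(\KU_p^Y))^\vee \simeq \map(Y^\vee,\KU_p)$; this is the right-hand vertical equivalence of the statement. On the other hand, the standard identity $\Phi_1\circ\Omega^\infty \simeq L_{\mathrm{T}(1)}(-) \simeq L_{\K(1)}(-)$ (the last equivalence being the telescope conjecture at height~$1$) gives $\Phi_1(\Omega^\infty Y) \simeq L_{\K(1)}Y$, which is $\K(1)$-locally dualisable since $Y$ is; this supplies the left-hand equivalence $\KU_p\hotimes Y \simeq \KU_p\hotimes\Phi_1(\Omega^\infty Y)$, and moreover makes the natural map $\KU_p\hotimes\Phi_1(\Omega^\infty Y) \to \map_{\KU_p}(\KU_p^{\Phi_1(\Omega^\infty Y)},\ \KU_p)$ an equivalence. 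Since this last map is precisely the one occurring in the definition of the linear comparison map, $c^{\KU_p}_{\Omega^\infty Y}$ is identified with this equivalence followed by $(c^{\,\Omega^\infty Y}_{\KU_p})^\vee$. Consequently the composite in the statement differs from the identity composite above only by pre- and post-composition with equivalences, and is therefore an equivalence.

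I expect the only delicate point to be this final bookkeeping step: distinguishing the dual comparison map $c^{\,\Omega^\infty Y}_{\KU_p}$ that is produced by dualising Lemma~\ref{lem:behrens_rezk_id} from the linear comparison map $c^{\KU_p}_{\Omega^\infty Y}$ in the statement. These agree precisely because $\Phi_1(\Omega^\infty Y)$ is $\K(1)$-locally dualisable, which is exactly where the hypothesis on $Y$ is used; without it, the dualised lemma would only give information about $c^{\,\Omega^\infty Y}_{\KU_p}$, not about $c^{\KU_p}_{\Omega^\infty Y}$ itself.
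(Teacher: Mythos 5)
Your argument is correct and is essentially the paper's own (implicit) proof: the corollary is deduced exactly by dualising Lemma~\ref{lem:behrens_rezk_id} and unwinding the definition of the $\KU_p$-linear comparison map, with $\K(1)$-local dualisability of $Y$ (hence of $\Phi_1(\Omega^\infty Y)\simeq L_{\K(1)}Y$) making the natural map $\KU_p\hotimes\Phi_1(\Omega^\infty Y)\to\map_{\KU_p}(\KU_p^{\Phi_1(\Omega^\infty Y)},\KU_p)$ an equivalence and supplying the identification $\TAQ_{\KU_p}(\Sym_{\KU_p}(\KU_p^Y))^\vee\simeq\map(Y^\vee,\KU_p)$. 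The only quibble is your opening claim that dualisability is needed \emph{only} for the comparison-map bookkeeping, whereas you yourself also use it for the right-hand vertical identification; this is harmless.
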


\begin{construction}
\label{constr:big_diagram_Thm_B}
We now construct a commutative diagram of $\K(1)$-local $\KU_p$-modules of the form
\[
    \begin{tikzcd}
        \KU_p \hotimes \Phi_1 X \arrow[r] \arrow[d, equals] & \KU_p\hotimes M^\vee(G) \arrow[r] \arrow[d,"\simeq"] & \KU_p \hotimes M^\vee(G) \arrow[d,"\simeq"]\\
        \KU_p \hotimes \Phi_1 X \arrow[r] \arrow[d,"c_X^{\KU_p}"] & \KU_p \hotimes \Phi_1\br*{\Omega^\infty \tM^\vee(G)} \arrow[r] \arrow[d,"c^{\KU_p}"] & \KU_p \hotimes \Phi_1\br*{\Omega^\infty \tM^\vee(G)} \arrow[d,"c^{\KU_p}"]\\
        \TAQ_{\KU_p}(\KU_p^{X_+})^\vee \arrow[r,"h"] \arrow[d, equals] & \TAQ_{\KU_p}\br*{\KU_p^{\Omega^\infty \tM^\vee(G)_+}}^\vee \arrow[r,"f"] \arrow[d,"\TAQ(\eps)^\vee"] & \TAQ_{\KU_p}\br*{\KU_p^{\Omega^\infty \tM^\vee(G)_+}}^\vee \arrow[d,"\TAQ(\eps)^\vee"]\\
        \TAQ_{\KU_p}(\KU_p^{X_+})^\vee \arrow[r] & \map(M(G),\ \KU_p) \arrow[r] & \map(M(G),\ \KU_p),
    \end{tikzcd}
\]
as follows.
\begin{letterenum}
    \item The first row is obtained by applying $\KU_p \hotimes {-}$ to the fibre sequence of Theorem~\ref{thm:Phi_fibre_seq_bousfield}.
    \item In the first triple of vertical maps, we have used the natural equivalence~\eqref{eq:Phi_of_modification} to identify $\Phi_1 \Omega^\infty \tM^\vee(G)$ with $M^\vee(G)$.
    \item The second triple of vertical maps is the $\KU_p$-linear Behrens--Rezk comparison map.
    For the left vertical map, it is the comparison map for the space $X$, and for the last two it is the comparison map for the space $\Omega^\infty \tM^\vee(G)$.
    \item The third row is (up to equivalence) obtained by taking the TAQ-cohomology of the $\KU_p$-cochains of the diagram of Theorem~\ref{thm:Phi_fibre_seq_bousfield}.
    \item In the third triple of vertical maps, the last two are given by the natural transformation $\TAQ(\eps)^\vee$ discussed above.
    This defines in particular the fourth row.
\end{letterenum}
\end{construction}

All squares are obtained by applying a natural transformation, and so they come with homotopies witnessing their commutativity.
We still need to identify the bottom row with the fibre sequence
\[
    \begin{tikzcd}
        \TAQ_{\KU_p}(\KU_p^{X_+})^\vee \arrow[r,"\TAQ(T)^\vee"] &[1.5em] \map(M(G),\ \KU_p) \arrow[r,"\TAQ(R)^\vee"] &[1.5em] \map(M(G),\ \KU_p),
    \end{tikzcd}
\]
obtained by taking TAQ-cohomology of the presentation from Theorem~\ref{thm:cochain_presentation}.
This will follow from the following two lemmas.

\begin{lemma}
\label{lem:comm_diagram1}
The diagram of $\K(1)$-local $\KU_p$-modules
\[
    \begin{tikzcd}
        \KU_p\hotimes M(G) \arrow[r,"\widetilde{\eps}"] \arrow[dr,"T"'] & \KU_p^{\Omega^\infty \tM^\vee(G)_+}\arrow[d,"h^*"]\\
        & \KU_p^{X_+}
    \end{tikzcd}
\]
where $\widetilde{\eps}$ denotes the map \eqref{eq:counit_on_KUp} for $Y = \tM^\vee(G)$, commutes up to homotopy.
\end{lemma}
\begin{proof}
Note that we have used the identification
\[
    \KU_p \hotimes M(G) \simeq \KU_p^{M^\vee(G)},
\]
using that $M(G)$ is $\K(1)$-locally dualisable, to identify the source of $\widetilde{\eps}$.
Arguing in the same way as in Construction~\ref{constr:R_and_T}, we see that the map $\widetilde{\eps}$ corresponds to an element
\[
    i \in \KU_p^*\br*{M(G) \otimes \Sigma^\infty_+ \Omega^\infty \tM^\vee(G)} \cong \Hom_{\Z_p}(G,\hLambda[FG]).
\]
By Theorem~\ref{thm:Ktheory_infloop}, the map $i$ is the inclusion of generators $G \to \hLambda[FG]$.
Similarly, the map $h^*\circ \widetilde{\eps}$ corresponds to an element
\[
    j \in \KU_p^* \Big(M(G) \otimes \Sigma^\infty_+ X\Big) \cong \Hom_{\Z_p}(G,\Lambda[G]).
\]
Moreover, the map $j$ is equal to $\pi_*(h^*) \circ i$.
On homotopy groups, the map $h^*$ becomes the natural quotient $\hLambda[FG]\to\Lambda[G]$.
Thus $j$ is equal to the composite $G \to \hLambda[\overline{F}G] \to \Lambda[G]$, or in other words, $j$ is the inclusion of generators $G \to \Lambda[G]$.
Therefore, by definition of $T$ in Construction~\ref{constr:R_and_T}, the composite $h^*\circ \widetilde{\eps}$ is homotopic to $T$.
\end{proof}

In the next lemma, note that for a spectrum $Y$, the target of the map $\eps_Y$ from \eqref{eq:eps_map} is $\K(1)$-local.
It therefore factors over the $\K(1)$-localisation of the source, resulting in a map of the form
\[
    \KU_p \hotimes \Sym(L_{\K(1)} Y^\vee) \to \KU_p^{\Omega^\infty Y_+}.
\]
We will denote this map by $\eps_Y$ as well in the following.

\begin{lemma}
\label{lem:comm_diagram2}
The diagram of $\K(1)$-local $\KU_p$-modules
\[
    \begin{tikzcd}
        \KU_p \hotimes M(G) \arrow[r,"R"] \arrow[d,"\widetilde{\eps}"'] &[1.5em] \KU_p \hotimes \Sym(M(G)) \arrow[d,"\eps"]\\
        \KU_p^{\Omega^\infty \tM^\vee(G)_+} \arrow[r,"f^*"] & \KU_p^{\Omega^\infty \tM^\vee(G)_+}
    \end{tikzcd}
\]
where $\widetilde{\eps}$ denotes the map of the same name from \cref{lem:comm_diagram1} and where $\eps$ denotes the map explained above for $Y= \tM^\vee(G)$, commutes up to homotopy.
\end{lemma}
\begin{proof}
After taking homotopy groups, the diagram becomes
\[
    \begin{tikzcd}
        G \arrow[r,"\theta^p-F\theta^p_G"] \arrow[d] &[1.5em] \Free_\theta[G] \arrow[d]\\
        \hLambda[\overline{F}G] \arrow[r,"\theta^p-\overline{F}\theta^p_G"] &[1.5em] \hLambda[\overline{F}G],
    \end{tikzcd}
\]
with the vertical maps being the natural maps by Theorem~\ref{thm:Ktheory_infloop}.
This diagram obviously commutes, so Proposition~\ref{prop:barthel-heard} implies the original diagram commutes up to homotopy.
\end{proof}

\begin{proof}[Proof of Theorem~{\upshape\ref{thm:diagram_of_fibre_seqs}}]
The diagram from Construction~\ref{constr:big_diagram_Thm_B} has the desired properties.
Indeed, using Lemmas~\ref{lem:comm_diagram1} and \ref{lem:comm_diagram2}, we may identify the bottom row with the fibre sequence
\[
    \begin{tikzcd}
        \TAQ_{\KU_p}(\KU_p^{X_+})^\vee \arrow[r,"\TAQ(T)^\vee"] &[1.5em] \map(M(G),\ \KU_p) \arrow[r,"\TAQ(R)^\vee"] &[1.5em] \map(M(G),\ \KU_p)
    \end{tikzcd}
\]
obtained by taking TAQ-cohomology of the presentation from Theorem~\ref{thm:cochain_presentation}.
By Corollary~\ref{cor:left_inv_for_comp_map}, composing the middle three or the right-most three vertical maps yields an equivalence.
\end{proof}

\appendix

\section{The \texorpdfstring{{\boldmath$\K(1)$}}{K(1)}-local Tor spectral sequence}
\label{app:K1_Tor_SS}

\begin{center}
    \textit{This appendix is joint work with Max Blans.}
\end{center}

The goal of this appendix is to set up a $\K(1)$-local analogue of the Tor spectral sequence
\[
    E^2_{s,t} = \Tor^{\pi_*A}_{s,t}(\pi_*M,\, \pi_*N) \implies \pi_{s+t}(M\otimes_A N)
\]
where $A$ is an $\E_1$-ring spectrum, $M$ is a right $A$-module spectrum, and $N$ is a left $A$-module spectrum; standard references are \cite[Ch.~IV, Thm.~4.1]{EKMM} and \cite[Prop.~7.2.1.19]{HA}.
In the $\K(1)$-local case, both $A$, $M$ and $N$ are assumed to be $\K(1)$-local, the Tor groups are replaced by Tor groups in the abelian category of derived $p$-complete modules over $\pi_* A$, and the abutment is the homotopy groups of the $\K(1)$-local smash product $M\hotimes_A N = L_{\K(1)} (M\otimes_A N)$.

Let us first recall Lurie's construction of the Tor spectral sequence from \cite[§7.2.1]{HA}. 
This consists of choosing a good simplicial resolution $P_\bullet$ of $N$ by free $A$-modules, and then taking the spectral sequence associated to the simplicial object $M \otimes_A P_\bullet$, as constructed in \cite[§1.2.2, §1.2.4]{HA}.
In setting up the $\K(1)$-local analogue we will proceed in the same way. 
We start by taking $P_\bullet$ to be a good simplicial resolution of $N$ in the $\infty$-category of $\K(1)$-local $A$-modules, and we then take the spectral sequence associated to $M \hotimes_A P_\bullet$.

However, it is not in general true that this spectral sequence converges to $\pi_*(M\hotimes_A N)$.
This failure boils down to the fact that the functor $\pi_* \colon \Sp_{\K(1)} \to \Modh_{\Z_p}^*$ does not preserve sequential colimits.
There are nevertheless some practical conditions under which the spectral sequence does converge.
More precisely, we get the following result.

\begin{theorem}
\label{prop:K1_local_Tor_SS}
Let $A$ be a $\K(1)$-local $\E_1$-ring spectrum, $M$ a $\K(1)$-local right $A$-module spectrum, and $N$ a $\K(1)$-local left $A$-module spectrum.
There exists a spectral sequence $\set{E^r_{s,t},\ d^r}$ of derived $p$-complete $\Z_p$-modules, with $E^2$-page
\[
    E^2_{s,t} = \Torh_{s,t}^{\pi_*A}(\pi_*M, \ \pi_* N),
\]
and with differentials $d^r$ of bidegree $(-r,\ r-1)$.
If on the $E^\infty$-page every anti-diagonal {\upshape(}i.e., all $E^\infty_{s,t}$ with $s+t=n$ for fixed $n${\upshape)} contains only finitely many nonzero terms, then the spectral sequence converges to
\[
    \pi_{s+t}(M\hotimes_A N).
\]
\end{theorem}

\begin{remark}
The condition stated to ensure convergence is not optimal, but is sufficient for our purposes.
\end{remark}

\begin{remark}
This spectral sequence was already considered in \cite[§3.3]{max_blans_masters}, but the subtleties regarding convergence were overlooked there.
The applications considered in \cite{max_blans_masters} remain valid, as they satisfy the above convergence criterion.
\end{remark}

\begin{remark}
    In many cases, the $\Torh$ groups making up the $E^2$-page of the spectral sequence can be computed in terms of ordinary (uncompleted) Tor groups.
    See Proposition~\ref{prop:SES_complete_Tor} for a precise statement.
\end{remark}

\begin{remark}
The same constructions work in the $\K(n)$-local setting for any $n\geq 2$, but in those cases it is not in general possible to identify the $E^2$-page as completed Tor groups.
The reason for this is that the functor $\pi_* \colon \Sp_{\K(n)} \to \Modh_{\Z_p}^*$ preserves coproducts only if $n=1$.
\end{remark}

The rest of the appendix is structured as follows.
In §\ref{ssec:completed_Tor_groups} we define and study the completed Tor groups that make up the $E^2$-page.
In §\ref{ssec:SS_in_K1_local_spectra} we study the spectral sequence associated to a filtered object in spectra in the case that all these spectra are $\K(1)$-local.
This is where the convergence criterion appears.
Finally, the proof of Theorem~\ref{prop:K1_local_Tor_SS} is given in §\ref{ssec:construction_Tor_SS}.

The main text works mainly in the $\KU_p$-linear setting, so that there all homotopy groups can be regarded as $\Z/2$-graded.
Due to the more general setting of this appendix, we are forced to work with $\Z$-graded objects instead.

\subsection{Completed Tor groups}
\label{ssec:completed_Tor_groups}
Unlike in the main text, let $\Modh_{\Z_p}^*$ denote the category of \emph{$\Z$-graded} derived $p$-complete $\Z_p$-modules.
This category is an abelian subcategory of $\Mod_{\Z_p}^*$, closed under $\Ext^*$ and extensions; see \cite[Thm.~A.6]{hovey_strickland_Ktheories_localisation}.
It has all small limits and colimits, with limits being the same as in $\Mod_{\Z_p}^*$, and colimits being computed by taking $L_0$ of the colimit in $\Mod_{\Z_p}^*$; see \cite[§A.2]{barthel_frankland_power_op_MoravaE}.
Note that finite colimits however are the same in both $\Mod_{\Z_p}^*$ and $\Modh_{\Z_p}^*$.
The category $\Modh_{\Z_p}^*$ becomes a symmetric monoidal category under the completed tensor product
\[
    M \hotimes_{\Z_p} N := L_0(M \otimes_{\Z_p} N).
\]
This monoidal product preserves small colimits in each variable separately, and turns $L_0$ into a symmetric monoidal functor $\Mod_{\Z_p}^* \to \Modh_{\Z_p}^*$; see (the proof of) \cite[Cor.~A.7]{hovey_strickland_Ktheories_localisation}.

Let $R$ be an associative algebra object in $\Modh_{\Z_p}^*$, in other words, a $\Z$-graded $\Z_p$-algebra whose underlying $\Z_p$-module is derived $p$-complete.
Let $\LModh_R^*$ denote the category of left module objects over $R$ in $\Modh_{\Z_p}^*$, and similarly let $\RModh_R^*$ denote the right module objects.
These are again abelian categories, and the forgetful functor to $\Modh_{\Z_p}^*$ creates all small limits and colimits; see \cite[Prop.~1.2.14]{martyImmersionsOuvertesMorphismes2009}.
Given $M \in \RModh_R^*$ and $N \in \LModh_R^*$, we can form their relative tensor product via the formula
\[
    M \hotimes_R N := \colim \left ( 
    \begin{tikzcd}[column sep=2em]
        M \hotimes_{\Z_p} R \hotimes_{\Z_p} N \ar[r,shift left=.75ex] \ar[r,shift right=.75ex,swap] & M \hotimes_{\Z_p} N
    \end{tikzcd}
    \right ),
\]
where the equaliser is computed in $\Modh_{\Z_p}^*$ (or equivalently in $\Mod_{\Z_p}^*$), and where the two maps are given by the right and left action of $R$ on $M$ and $N$ respectively.

\begin{lemma}
    \label{lem:L0_relative_tensor}
    Let $R$ be a graded $\Z_p$-algebra, let $M\in \RMod_R^*$, and let $N\in\LMod_R^*$.
    Then the natural map
    \[
        L_0 M\hotimes_{L_0R} L_0 N \to L_0(M\otimes_R N)
    \]
    is an isomorphism.
\end{lemma}
\begin{proof}
    This follows from the definition of $M\otimes_R N$ as the coequaliser
    \[
        \colim \left ( 
        \begin{tikzcd}[column sep=2em]
            M \otimes_{\Z_p} R \otimes_{\Z_p} N \ar[r,shift left=.75ex] \ar[r,shift right=.75ex,swap] & M \otimes_{\Z_p} N
        \end{tikzcd}
        \right )
    \]
    together with the fact that $L_0$ preserves colimits and is symmetric monoidal.
\end{proof}

Going forward, we will make repeated use of the following.

\begin{recall}[\cite{hovey_strickland_Ktheories_localisation}, Thm.~A.2]
    Consider a short exact sequence of graded $\Z_p$-modules
    \[
        0 \to A \to B \to C \to 0.
    \]
    We then obtain an induced long exact sequence
    \[
        0 \to L_1A \to L_1B \to L_1C \to L_0A \to L_0B \to L_0C \to 0
    \]
    where $L_1$ is the first right derived functor of $p$-completion.
\end{recall}

The abelian category $\Modh_{\Z_p}^*$ has enough projectives, and an object is projective if and only if it is of the form $L_0 F$, where $F$ is a free $\Z$-graded $\Z_p$-module; see \cite[§A.4]{hovey_strickland_Ktheories_localisation}.
It immediately follows from this that $\LModh_R^*$ has enough projectives.

\begin{definition}
\label{def:Torh}
Given $M \in \RModh_R^*$, let $\Torh_{i}^R(M, -)$ denote the $i$-th left derived functor of $M \hotimes_R {-} \colon \LModh_R^* \to \Modh_{\Z_p}^*$.
We refer to these as \defi{completed Tor groups}.
\end{definition}

\begin{remark}
    The completed Tor groups above are a relative version (at height $1$) of the completed Tor groups appearing in \cite[§1.1]{baker_Lcomplete_Hopf}.
    (Note that the ring denoted by $R$ in loc.\ cit.\ is in our case given by $\Z_p$, and not by what we call $R$ above.)
\end{remark}

We choose here to resolve the second variable instead of the first.
It turns out that this does not matter, as we now show.

At various points in this section, we want to allow ourselves to work with $R$ as arising as the derived $p$-completion of another $\Z_p$-algebra $S$.
This is useful in practice; see, e.g., \cref{rmk:why_use_decompletions} in the main text.

\begin{proposition}
    \label{prop:Tor_biresolution}
    Let $S$ be a graded $\Z_p$-module such that $L_1S$ vanishes.
    Let $M$ be a graded right $S$-module, and $N$ a graded left $S$-module.
    Let $F_\bullet$ be an $S$-free resolution of $M$, and let $G_\bullet$ be an $S$-free resolution of $N$.
    Then $\Torh_*^{L_0S}(L_0M,L_0N)$ can be computed as
    \begin{itemize}
        \item the homology of the complex $L_0(F_\bullet \otimes_S N)$,
        \item the homology of the complex $L_0(M\otimes_S G_\bullet)$,
        \item the homology of $L_0$ applied to the total complex of $F_\bullet \otimes_S G_\bullet$.
    \end{itemize}
\end{proposition}

For the proof, we require the following fact, which is special to the height $1$ case we find ourselves in.
Beware that for higher heights, the second conclusion is false; see \cite[§1.3]{hovey_filtered_colimits_2008}.
In fact, \cref{prop:Tor_biresolution} is false for higher heights as well; see \cite[§1.1, App.~B]{baker_Lcomplete_Hopf} for a discussion.

\begin{lemma}
    \label{lem:L1_sums}
    Let $\set{M_\alpha}$ be a collection of graded $\Z_p$-modules that are all $L_1$-acyclic.
    Then we have
    \[
        L_1 \bigoplus_\alpha M_\alpha = 0.
    \]
    In particular, direct sums in $\Modh_{\Z_p}^*$ are exact.
\end{lemma}
\begin{proof}
    This is \cite[Prop.~1.4]{hovey_filtered_colimits_2008}, although he only records the second statement.
    Let us repeat his proof here for completeness.
    The functor $L_1$ left exact; in fact, it is given by $\Hom(\Z/p^\infty,\blank)$, so it even preserves arbitrary products.
    As we have an inclusion
    \[
        \bigoplus_\alpha M_\alpha \hookto \prod_\alpha M_\alpha,
    \]
    we therefore learn that by applying $L_1$, we obtain an inclusion
    \[
        L_1 \bigoplus_\alpha M_\alpha \hookto \prod_\alpha L_1 M_\alpha,
    \]
    but the target vanishes, showing the first claim.
    The second claim follows from this using that derived $p$-complete modules are $L_1$-acyclic; see \cite[Thm.~A.6]{hovey_strickland_Ktheories_localisation}.
\end{proof}

\begin{proof}[Proof of \cref{prop:Tor_biresolution}]
    In this proof, let us write $\LTorh^{L_0S}_i(\blank,L_0N)$ for the $i$-th derived functor of $\blank \hotimes_{L_0S} L_0N$, and write $\RTorh^{L_0S}_i(L_0M,\blank)$ for the functor as defined in \cref{def:Torh}.

    Since $F_i$ is a sum of shifts of $S$, it follows from \cref{lem:L1_sums} that $L_1 F_i=0$ for all $i$.
    One then easily deduces from the long exact sequence on $L_0$ that the homology of the chain complex $L_0F_\bullet$ vanishes in positive degrees, and is $L_0M$ in degree zero.
    Since $F_i$ is a direct sum of (shifts of) $S$, we find that each $L_0 F_i$ is projective in $\RModh_{L_0S}^*$, so that $L_0F_\bullet$ is a projective resolution of $L_0M$ in this category.
    Likewise, $L_0G_\bullet$ is a projective resolution of $L_0N$ in the category $\LModh_{L_0S}^*$.


    By \cref{lem:L0_relative_tensor}, we have an isomorphism $(L_0F_i) \hotimes_{L_0S}(L_0G_j) \cong L_0(F_i\otimes_S G_j)$.
    As a result, for every fixed $i$ and $j$, the homology of the complexes $L_0(F_\bullet \otimes_S G_j)$ and $L_0(F_i \otimes_S G_\bullet)$ is given, respectively, by
    \[
        \LTorh^{L_0S}_*(L_0M,\,L_0G_j) \qquad \text{and}\qquad \RTorh^{L_0S}_*(L_0F_i,\,L_0N).
    \]
    Note however that the functors $(L_0F_i)\hotimes_S \blank$ and $\blank\hotimes_S (L_0G_j)$ are exact by \cref{lem:L1_sums}, so that both of these vanish in positive degrees, and in degree zero are given, respectively, by
    \[
        (L_0M)\hotimes_{L_0S} (L_0G_j) \qquad \text{and}\qquad (L_0F_i)\hotimes_{L_0S} (L_0N).
    \]
    As a result, the two spectral sequences associated to the double complex $L_0(F_\bullet \otimes_S G_\bullet)$ collapse, and the total homology of $L_0(F_\bullet \otimes_S G_\bullet)$ is computed as the homology of either of the complexes
    \[
        (L_0M)\hotimes_{L_0S} (L_0G_\bullet) \qquad \text{or} \qquad (L_0F_\bullet) \hotimes_{L_0S} (L_0N).
    \]
    The homology of the first is $\RTorh_*^{L_0S}(L_0M,L_0N)$, and the homology of the second is $\LTorh_*^{L_0S}(L_0M,L_0N)$.
    In particular, these two variants of the completed Tor groups are isomorphic.
    Note that the first of these are the groups $\Torh_*^{L_0S}(L_0M,L_0N)$ in the sense of \cref{def:Torh}.
    This finishes the proof.
\end{proof}

For $i=0$, the completed Tor group $\Torh^R_0(M,N)$ is simply given by $L_0(M\otimes_R N)$, or in other words, by $L_0\Tor^R_0(M,N)$.
Our next goal is to compute the higher completed Tor group up to an extension problem; see \cref{prop:SES_complete_Tor} below.

We begin by measuring the failure of $L_0$ to commute with homology.

\begin{lemma}
    \label{lem:reversing_L0_homology}
    Let $A$, $B$ and $C$ be graded $\Z_p$-modules, and let
    \[
        \begin{tikzcd}
            A \rar["f"] & B \rar["g"] & C
        \end{tikzcd}
    \]
    be a sequence such that $gf = 0$.
    Write $H = \ker g / \im f$.
    Suppose that $B$ and $C$ are $L_1$-acyclic.
    Then we have a short exact sequence
    \[
        0 \to L_0 H \to \ker L_0g/\im L_0f \to L_1 H \to 0.
    \]
\end{lemma}
\begin{proof}
    Write $\widetilde{H}$ for $\ker L_0g/\im L_0f$.
    Using right exactness of $L_0$, we then have a commutative diagram with exact rows
    \[
        \begin{tikzcd}
            & L_0\im f \rar \dar["\alpha"] & L_0 \ker g \rar \dar["\beta"] & L_0 H \rar \dar & 0\\
            0 \rar & \im L_0 f \rar & \ker L_0 g \rar & \widetilde{H} \rar & 0
        \end{tikzcd}
    \]
    where the vertical maps are the natural comparison maps (using the right exactness of $L_0$ to define $\alpha$).
    The claim will follow from the snake lemma together with the following two claims: the map $\alpha$ is surjective, and the map $\beta$ is injective with cokernel $L_1 H$.
    We now verify these two claims.

    First, we verify that $L_1 \im f = 0$.
    Observe that we have a short exact sequence
    \begin{equation}
        \label{eq:image_coker_ses}
        0 \to \im f \to B \to \coker f \to 0.
    \end{equation}
    Since $B$ is $L_1$-acyclic and $L_1$ is left exact, this implies that $L_1 \im f = 0$.
    As $C$ is assumed to be $L_1$-acyclic as well, the exact same argument shows that $L_1 \im g=0$ also.

    Next, we show that $\alpha$ is surjective and we compute its kernel.
    Using the short exact sequence~\eqref{eq:image_coker_ses}, we obtain a commutative diagram with exact rows
    \[
        \begin{tikzcd}
            0 \rar & L_1 \coker f \rar & L_0 \im f \dar["\alpha"] \rar & L_0 B \rar \dar[equals] & L_0 \coker f \rar \dar["\cong"] & 0\\
            & 0 \rar & \im L_0 f \rar & L_0 B \rar & \coker L_0 f \rar & 0.
        \end{tikzcd}
    \]
    Here the right vertical map is an isomorphism because $L_0$ is right exact.
    It follows that $\alpha$ is surjective with kernel $L_1 \coker f$.
    To further compute this kernel, note that we have a short exact sequence
    \[
        0 \to H \to \coker f \to \im g \to 0.
    \]
    We previously showed that $L_1 \im g=0$, so applying $L_1$ to this short exact sequence results in an exact sequence
    \[
        0 \to L_1 H \to L_1 \coker f \to 0.
    \]
    In summary, we find that $\alpha$ is surjective with kernel isomorphic to $L_1 H$.
    
    Now consider the short exact sequence
    \[
        0 \to \ker f \to A \to \im f \to 0.
    \]
    Applying $L_0$, we obtain a short exact sequence because $L_1\im f=0$.
    As a result, we have a commutative diagram with short exact rows
    \[
        \begin{tikzcd}
            0 \rar & L_0 \ker f \rar \dar["\beta"] & L_0 A \rar \dar[equals] & L_0\im f \rar \dar["\alpha"] & 0\\
            0 \rar & \ker L_0 f \rar & L_0 A \rar & \im L_0 f \rar & 0.
        \end{tikzcd}
    \]
    Applying the snake lemma and our earlier computation of $\ker \alpha$, we learn that we have a short exact sequence
    \[
        \begin{tikzcd}
            0 \rar & L_0 \ker f \rar["\beta"] & \ker L_0 f \rar & L_1 H \rar & 0.
        \end{tikzcd}\qedhere
    \]
\end{proof}

\begin{proposition}
    \label{prop:SES_complete_Tor}
    Let $S$ be a graded $\Z_p$-algebra such that $L_1 S$ vanishes.
    Let $M$ be a graded right $S$-module, and $N$ a graded left $S$-module.
    Then for every $i>0$, we have a short exact sequence
    \[
        0 \to L_0 \Tor^S_i(M,N) \to \Torh^{L_0S}_i(L_0M, L_0N) \to L_1 \Tor^S_i(M,N) \to 0.
    \]
    In particular, if $\Tor^S_i(M,N)$ vanishes, then $\Torh^{L_0S}_i(L_0M,L_0N)$ also vanishes.
\end{proposition}
\begin{proof}
    Choose $S$-free resolutions $F_\bullet \to M$ and $G_\bullet \to N$.
    Then the homology of the total complex of $F_\bullet \otimes_S G_\bullet$ is given by $\Tor^S_*(M,N)$.
    Using \cref{prop:Tor_biresolution}, we may compute the group $\Torh_i^{L_0S}(L_0M,L_0N)$ as the $i$-th homology of $L_0$ applied to the total complex of $F_\bullet \otimes_S G_\bullet$.
    As $S$ is $L_1$-acyclic and $F_\bullet \otimes_R G_\bullet$ consists levelwise of sums of shifts of $S$, it follows from \cref{lem:L1_sums} that its total complex is levelwise $L_1$-acyclic.
    The claim therefore follows from \cref{lem:reversing_L0_homology}.
\end{proof}

\subsection{Filtered objects and spectral sequences in \texorpdfstring{$\Sp_{\K(1)}$}{Sp\_K(1)}}
\label{ssec:SS_in_K1_local_spectra}
As in, e.g., \cite[§1.2.2]{HA}, one can associate to a filtered object $X \colon \nerve(\Z_{\geq 0}) \to \Sp$ in spectra a spectral sequence with $E^1$-page
\[
    E^1_{s, t} = \pi_{s+t} \cofib \br*{X(s-1) \shortto X(s)},
\]
which converges to $\pi_{s+t}(\colim_n X(n))$.
In this section, we will consider the case where $X(n)$ is $\K(1)$-local for every $n$, and we will give an easy criterion in terms of this spectral sequence for when
\[
    L_{\K(1)} \colim_n X(n) = \colim_n X(n),
\]
which implies that the spectral sequence converges to $\pi_{s+t}(L_{\K(1)} \colim_n X(n))$.

\begin{lemma}
Let $X \colon \nerve(\Z_{\geq 0}) \to \Sp_{\K(1)}$ be a filtered object, and let $\set{E^r_{s,t}}$ be the associated spectral sequence.
Then for every $1 \leq r \leq \infty$, the $E^r$-page takes values in derived $p$-complete $\Z_p$-modules $\Modh_{\Z_p}$.
\end{lemma}
\begin{proof}
For $r=1$ this is clear, because the $E^1$-page is given by the homotopy groups of $\K(1)$-local spectra.
This then proves it for any finite $r$ by induction, as $\Modh_{\Z_p}$ is closed under finite limits and colimits in $\Mod_{\Z_p}$.
Finally, the $E^\infty$-page is given by the sequential colimit
\[
    E^\infty_{s,t} = \colim_r E^r_{s,t}.
\]
For $r$ large enough, the map $E^r_{s,t} \to E^{r+1}_{s,t}$ is an epimorphism.
The sequential colimit of a system of epimorphisms between derived $p$-complete modules is again derived $p$-complete.
Indeed, such a colimit is a quotient of one of the modules in the sequence, and a quotient of a derived $p$-complete module by a derived $p$-complete submodule is again derived $p$-complete.
This finishes the proof.
\end{proof}

\begin{proposition}
\label{prop:condition_convergence_SS_for_filtered_obj}
Let $X \colon \nerve(\Z_{\geq 0}) \to \Sp_{\K(1)}$ be a filtered object, and let $\set{E^r_{s,t}}$ be the associated spectral sequence.
If on the $E^\infty$-page every anti-diagonal {\upshape(}i.e., all $E^\infty_{s,t}$ with $s+t=n$ for fixed $n${\upshape)} contains only finitely many nonzero terms, then the spectral sequence converges to
\[
    \pi_{s+t}({\textstyle L_{\K(1)} \colim_n X(n)}).
\]
\end{proposition}
\begin{proof}
As in the proof of \cite[Prop.~1.2.2.14]{HA}, let $A_q := \pi_q \br{ \colim_n X(n) }$ and let
\[
    F_m A_q := \im \br*{ \pi_q X(m) \to \mathop{\pi_q} (\textstyle{ \colim_n X(n)}) }.
\]
Note that $F_mA_q$ is derived $p$-complete for every $m$ and $q$.
It is proved in loc.\ cit.\ that
\[
    E^\infty_{s,t} \cong F_sA_{s+t}/F_{s-1} A_{s+t}.
\]
By the assumption, we see that for fixed $q$ and for $m$ large enough, we have
\[
    F_mA_q = F_{m+1} A_q = \dotsb = A_q.
\]
This means that $\colim_m F_m A_q \cong \colim_m \pi_q X(m)$ is equal to $F_mA_q$ for some $m$, which in particular shows it is derived $p$-complete.
As a result, we have
\[
    L_0 \colim_m \pi_q X(m) \cong \colim_m \pi_q X(m) \cong \pi_q(\colim_m X(m)).
\]
By \cite[Cor.~3.5]{hovey_filtered_colimits_2008}, we have a short exact sequence
\[
     0 \to {\textstyle L_0 \colim_n \pi_q X(n)} \to \pi_q ({\textstyle L_{\K(1)} \colim_n X(n)}) \to {\textstyle L_1 \colim_n \pi_{q-1} X(n)} \to 0.
\]
The functor $L_1$ vanishes on derived $p$-complete modules by \cite[Thm.~A.6]{hovey_strickland_Ktheories_localisation}, so we find that $\colim_n X(n) = L_{\K(1)} \colim_n X(n)$, proving the claim.
\end{proof}

\begin{construction}
\label{rmk:Dold_Kan}
Suppose $Q_\bullet$ is a simplicial object in $\Sp_{\K(1)}$. Via the Dold--Kan correspondence of \cite[§1.2.4]{HA}, we obtain a filtered object
$X \colon \nerve(\Z_{\geq 0}) \to \Sp$ such that
\[
    X(m) \simeq \colim_{[k] \in \Delta_{\leq m}^{\op}} Q_k
\]
and $\colim_m X(m) \simeq \abs{Q_\bullet}$, where we take the realisation in $\Sp$.
Note that each $X(m)$ is $\K(1)$-local, as it is equivalent to a finite colimit of $\K(1)$-local spectra.
Hence it follows that if we consider the spectral sequence associated to $X$ and if the criterion from Proposition~\ref{prop:condition_convergence_SS_for_filtered_obj} applies, then the spectral sequence converges to $L_{\K(1)} \abs{Q_\bullet}$, i.e., the realisation in $\Sp_{\K(1)}$.
Moreover, by \cite[Rmk.~1.2.4.4]{HA}, the $E^1$-page of this spectral sequence can be described as follows: the column $E^{1}_{*, t}$ is isomorphic to the normalised chain complex associated to the simplicial abelian group $\pi_t(Q_\bullet)$.
\end{construction}

\subsection{The construction of the Tor spectral sequence}
\label{ssec:construction_Tor_SS}
In this section we construct the $\K(1)$-local Tor spectral sequence and prove Theorem~\ref{prop:K1_local_Tor_SS}.
We closely follow \cite[§7.2.1]{HA}.

Let $A$ be a $\K(1)$-local $\E_1$-ring spectrum, $M$ a $\K(1)$-local right $A$-module spectrum, and $N$ a $\K(1)$-local left $A$-module spectrum.
Consider the composite
\[
    \pi_* M \otimes_{\pi_* A} \pi_* N \to \pi_*(M \otimes_A N) \to \pi_*(M \hotimes_A N).
\]
As the target is derived $p$-complete, this factors through a map of $\Z$-graded derived $p$-complete $\Z_p$-modules
\[
    \pi_* M \hotimes_{\pi_* A} \pi_* N \to \pi_*(M \hotimes_A N).
\]

\begin{definition}
Let $F$ be a $\K(1)$-local left $A$-module.
We say $F$ is \defi{quasi-free} if $F \simeq L_{\K(1)} \bigoplus_{\alpha} \Sigma^{n_\alpha}A$ for some integers $n_\alpha$.
\end{definition}

\begin{lemma}
\label{lem:isomorphism_htpy_groups_smash_product}
Suppose $N$ is a quasi-free left $A$-module.
Then the map
\[
    \pi_* M \hotimes_{\pi_* A} \pi_* N \to \pi_*(M \hotimes_A N)
\]
is an isomorphism.
\end{lemma}
\begin{proof}
The functor $\pi_* \colon \Sp_{\K(1)} \to \Modh_{\Z_p}^*$ preserves small coproducts by \cite[Thm.~3.3]{hovey_filtered_colimits_2008}.
Hence both domain and target of the map are compatible with coproducts taken in the $\K(1)$-local category.
As both sides are also compatible with shifts, we are reduced to the case $N = A$, where it is obvious that the map is an isomorphism.
\end{proof}

\begin{proof}[Proof of Theorem~{\upshape\ref{prop:K1_local_Tor_SS}}]
Let $S = \set{\Sigma^n A \mid n \in \Z }$.
Choose an $S$-free $S$-hypercovering $P_\bullet$ of $N$ in the $\infty$-category $\mathop{L_{\K(1)}}\LMod_A$, which exists by \cite[Prop.~7.2.1.4]{HA}.
We consider the spectral sequence associated to the simplicial object $M \hotimes_A P_\bullet$ in $\Sp_{\K(1)}$ in the way described in Construction~\ref{rmk:Dold_Kan}.
The convergence criterion from Proposition~\ref{prop:condition_convergence_SS_for_filtered_obj} is satisfied by assumption, and therefore the spectral sequence converges to $\pi_* (M \hotimes_A N)$.

It remains to identify the $E^2$-page.
By Construction~\ref{rmk:Dold_Kan}, the column $E^1_{* ,t}$ is isomorphic to the normalised chain complex associated to the simplicial abelian group $\pi_t(M \hotimes_A P_\bullet)$.
The fact that $P_\bullet$ is $S$-free implies that $P_n$ is quasi-free for all $n \geq 0$.
Hence by Lemma~\ref{lem:isomorphism_htpy_groups_smash_product} we have an isomorphism
\[
    \pi_* M \hotimes_{\pi_* A} \pi_* P_\bullet \congto \pi_*(M \hotimes_A P_\bullet).
\]
Because $P_n$ is quasi-free, we also have that $\pi_* P_n$ is a projective object in $\LModh_{\pi_*A}^*$.
This combined with the fact that $P_\bullet$ is an $S$-hypercovering of $N$ implies that the normalised chain complex associated to $\pi_*P_\bullet$ is a resolution of $\pi_*N$ by projective objects of $\LModh_{\pi_*A}^*$.
Taking homology on the $E^1$-page, we therefore find
\[
    E^2_{s, t} \cong \Torh_{s, t}^{\pi_* A}(\pi_*M,\ \pi_*N).\qedhere
\]
\end{proof}

\phantomsection
\printbibliography[heading=bibintoc]

\end{document}